\newtheorem{theoremi}{Theorem}[]
\newtheorem{theorem}{Theorem}[section]
\newtheorem{lemma}[theorem]{Lemma}
\newtheorem{proposition}[theorem]{Proposition}
\newtheorem*{proposition*}{Proposition}
\newtheorem*{lemma*}{Lemma}
\newtheorem*{theorem*}{Theorem}
\newtheorem{corollary}[theorem]{Corollary}
\newtheorem{definition}[theorem]{Definition}
\newtheorem{remark}[theorem]{Remark}
\DeclareMathOperator{\Aut}{Aut}
\DeclareMathOperator{\sgn}{sgn}
\DeclareMathOperator{\ad}{ad}
\DeclareMathOperator{\Ad}{Ad}
\DeclareMathOperator{\id}{id}
\def\d{\;{\rm d}}
\def\R{\mathbb{R}}
\def\C2{{\sf{C}_2}}
\def\epsilon{\varepsilon}
\def\build\#1_\#2^\#3{\mathrel{\mathop{\kern 0pt\#1}\limits_{\#2}^{\#3}}}
\def\chint\#1{\mathchoice
   {\XXint\displaystyle\textstyle{\#1}}%
   {\XXint\textstyle\scriptstyle{\#1}}%
   {\XXint\scriptstyle\scriptscriptstyle{\#1}}%
   {\XXint\scriptscriptstyle\scriptscriptstyle{\#1}}%
   \!\int}
\def\XXint\#1\#2\#3{{\setbox0=\hbox{$\#1{\#2\#3}{\int}$}
     \vcenter{\hbox{$\#2\#3$}}\kern-.5\wd0}}
\title[Brownian homotopy around a Poisson cloud]{Homotopy and holonomy of the planar Brownian motion in a Poisson punctured plane}
\author{Isao Sauzedde}
\address{Isao Sauzedde -- Oxford University}
\email{isao.sauzedde@maths.ox.ac.uk}
\subjclass[2020]{60G60, 81T13, 60G17, 60D05, 60B20}
\keywords{Planar Brownian motion, Yang-Mills field, windings}
\begin{document}
\begin{abstract}
We define a family of diffeomorphism-invariant models of random connections on principal $G$-bundles over the plane, whose curvatures are concentrated on singular points.
In a limit when the number of point grows whilst the singular curvature on each point diminishes, the model converges in some sense towards a Yang--Mills field. We study another regime for which we prove that the holonomy along a Brownian trajectory converges towards an explicit limit.
\end{abstract}

\maketitle
\tableofcontents

\section{Introduction}


This paper is concerned with the way in which the Brownian motion winds around points in the plane. This question was studied in the past mostly from a homological point of view, that is by studying the winding of the Brownian motion around the points. One can mention in particular the celebrated result of Spitzer \cite{Spitzer} about the Cauchy behaviour of the asymptotic winding around one point, when the Brownian motion runs up to a time that tends to infinity. It paved the way for an exact computation by Yor \cite{Yor} of the distribution of this winding, at a fixed time, and also for further limits theorems when time tends to infinity \cite{Pitman}. Asymptotic windings around a family of points are also well known from the homological perspective (see e.g. \cite{MansuyYor}). 
Following ideas of Werner on the set of points with given winding \cite{Werner2,Werner}, we showed in a previous paper \cite{LAWA} that, when the points are distributed according to a Poisson process $\mathcal{P}$ whose intensity goes to infinity, almost surely with respect to the Brownian motion, the averaged winding converges in distribution towards a Cauchy variable centered at the Lévy area of the process.

In this paper, we try to go beyond the homological point of view and capture some homotopical information. This is done by defining a random flat $G$-bundle over $\R^2\setminus \mathcal{P}$, each point being associated with a curvature which is scaled to grow as the inverse of the intensity of the process. We then  study the holonomy of the Brownian motion and show that, for any connected compact Lie group $G$, as the intensity of the point process goes to infinity, the Brownian holonomy converges, almost surely on the Brownian trajectory and in distribution on $\mathcal{P}$, towards the analogue in $G$ of a Cauchy distribution.
\\

The model is also motivated by its relation with the Yang--Mills field \cite{Driver}. The $2$-dimensional Yang--Mills field over the plane is a random map from a set of sufficiently smooth loops on $\R^2$ to a compact connected Lie group $G$. This map satisfies the algebraic properties of the holonomy function associated to a smooth connection on the trivial $G$-bundle over $\R^2$, but it is much less regular. The Yang--Mills field is therefore heuristically understood as the holonomy map of a very irregular random connection on this bundle. In the Sobolev scale, the regularity of this random connection is expected to be $H^{-\epsilon}$, and this is too low to allow for a reasonable direct extension of the holonomy to typical trajectories of the Brownian motion.


In the particular case where the group $G$ is abelian with Lie algebra $\mathfrak{g}$, the Yang--Mills field can be built from a $\mathfrak{g}$-valued white noise $\Phi$ on $\R^2$. For a loop $\ell$ based at $0$ and that is smooth enough, the Yang--Mills holonomy alongs $\ell$ is then simply given by $\Phi(\theta_\ell)$, where $\theta_\ell$ is the winding function associated with $\ell$. This of course requires $\ell$ to be regular enough for $\theta_\ell$ to be square integrable, which is the case when $\ell\in \mathcal{C}^1$.


Another way to define a map from a set of loops based on $0$ to $G$ in an multiplicative way is to take a  Poisson process $\mathcal{P}$, and then to define a group homomorphism $h$ from the fundamental group $\pi_1(\R^2\setminus \mathcal{P},0)$ to $G$.
We will define a family of such random pairs $(\mathcal{P},h)$, indexed by two parameters $K$ (the intensity of the point process) and $\iota$ (the curvature or charge associated with each point) in such a way that their law is invariant under volume-preserving diffeomorphisms and gauge transformations. When $\iota$ is proportional to $K^2$ and $K$ tends to infinity, these random connections converge towards a Yang--Mills field, in the sense of finite-dimensional marginals. Our goal, however, is to study \emph{another} scaling regime, when $\iota$ is proportional to $K$ instead. Then, the holonomy of any given smooth loop converges in distribution towards $1_G$, but the holonomy along the Brownian curve (made into a loop by joining the endpoints smoothly) does not. Instead, it converges towards a $1$-stable distribution in $G$.

One should mention that a related question was already studied long ago, in the commutative case, by Albeverio and Kusuoka \cite{Albeverio} (see the main theorem there), with a radically different approach.
Instead of rescaling the field, they embed the group in a Euclidean space, average the holonomy, and then rescale this average.

\section{Definition of the model and presentation of the main result}

Let $G$ be a connected compact Lie group, of which we denote the unit element by $1$. We endow its Lie algebra $\mathfrak{g}$ with a bi-invariant scalar product, and we denote by $\|\cdot \|$ the associated norm. This Euclidean structure on $\mathfrak g$ gives rise to a bi-invariant Riemannian structure on $G$, and we denote by $d_G$ the corresponding distance. For a positive real number $K$, the uniform probability distribution on the sphere of radius of $\frac{1}{K}$ of $\mathfrak{g}$ is denoted $\mu_K$, and for $K=1$ we set $\mu=\mu_1$.

%


Let then
 $\mathcal{P}^\mathfrak{g}_K$ be a Poisson process on $\R^2\times \mathfrak{g}$, with intensity $ K \lambda\otimes \mu_K$, where $\lambda$ is the Lebesgue measure on $\R^2$. Let also
 \[ \mathcal{P}^G_K\coloneqq\{(x,\exp(Z)): (x,Z)\in  \mathcal{P}^\mathfrak{g}_K\}\]
and  $\mathcal{P}_K$ be the projection of $\mathcal{P}^G_K$ on $\R^2$. The projection $\mathcal P^G_K\to \mathcal P_K$ is almost surely injective, so that there exists a map $h:\mathcal P_K\to G$ such that
\[\mathcal P^G_K=\{(x,h(x)) : x\in \mathcal P_K\}.\]
Once we fix  an isomorphism $\Phi$ from the fundamental group $\pi_1(\mathcal{P}_K)\coloneqq \pi_1(\mathbb{R}^2\setminus \mathcal{P}_K, 0)$ to the free group $\mathbb{F}_{ \mathcal{P}_K}$ generated by $\mathcal{P}_K$, the map $h$ induces a flat connection $\omega_{\Phi,K}$, up to gauge equivalence, on the bundle $(\mathbb{R}^2\setminus \mathcal{P}_K) \times G$ over $\mathbb{R}^2\setminus \mathcal{P}_K$. We will apply this to a specific family of isomorphisms $\Phi$ that we will describe in Definition \ref{def:proper}, and call \emph{proper}.

\medskip
 Let also $X:[0,1]\to \R^2$ be a planar Brownian motion started from $0$, independent from $\mathcal{P}^G_K$, and defined on a probability space $(\Omega^X,\mathcal{F}^X,\mathbb{P}^X)$, and let $\bar{X}$ be its concatenation with the straight line segment between its endpoints.
Then, $\bar{X}$ almost surely avoids $\mathcal{P}_K $, so that the holonomy $\omega_{\Phi,K} (\bar{X})$ of $\omega_{\Phi,K}$ along $\bar{X}$ is well defined.

The main result of this paper is the following.
\begin{theoremi}
\label{th:main}
Assume that the proper isomorphism $\Phi$ between $\pi_1(\mathcal{P}_K)$ and $\mathbb{F}_{ \mathcal{P}_K}$ is independent of $(X,\mathcal{P}^G_K)$ conditional on $\mathcal{P}_K$.

Then, $\mathbb{P}^X$-almost surely, as $K\to \infty$, $\omega_{\Phi,K}(\bar{X})$ converges in distribution towards a $1$-stable distribution on $G$ that does not depend on $\Phi$.
\end{theoremi}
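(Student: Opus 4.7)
The plan is to decompose the holonomy into an abelian contribution, to which the main result of \cite{LAWA} applies, and a non-abelian correction that vanishes in the limit. I would first use the proper isomorphism $\Phi$ to identify the homotopy class of $\bar X$ with a word
$$\Phi(\bar X) = \gamma_{x_{i_1}}^{\epsilon_1} \cdots \gamma_{x_{i_N}}^{\epsilon_N} \in \mathbb{F}_{\mathcal{P}_K},$$
express the holonomy as the ordered product
$$\omega_{\Phi,K}(\bar X) = \prod_{k=1}^N \exp(\epsilon_k Z_{x_{i_k}}) \in G,$$
where $Z_x \in \mathfrak{g}$ is the element with $h(x) = \exp(Z_x)$ and $\|Z_x\| = 1/K$, and then invoke the Baker--Campbell--Hausdorff formula (valid because every factor lies in an $O(1/K)$ neighbourhood of $1_G$) to rewrite $\omega_{\Phi,K}(\bar X) = \exp(S_K + R_K)$.

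Here $S_K \coloneqq \sum_{x \in \mathcal{P}_K} n_K(x)\, Z_x$, with $n_K(x)$ the winding of $\bar X$ around $x$, collects all linear-in-$Z$ contributions: crucially, the coefficient of $Z_x$ is the signed count of letters in $\Phi(\bar X)$ indexed by $x$, which equals the winding $n_K(x)$ and is therefore independent of the choice of proper $\Phi$. By the main theorem of \cite{LAWA}, $\PX$-almost surely, $S_K$ converges in distribution over the Poisson process to a $1$-stable (Cauchy-type) random vector $S_\infty$ on $\mathfrak{g}$ centred at the Lévy area of $X$.

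The heart of the proof is to establish that $R_K \to 0$ in $\Pmu$-probability, $\PX$-almost surely. Its leading term is
$$R_K^{(2)} = \tfrac{1}{2} \sum_{x \neq y \in \mathcal{P}_K} c_\Phi(x,y)\, [Z_x, Z_y],$$
where $c_\Phi(x,y)$ is a signed count of the orderings of the occurrences of $\gamma_x^{\pm 1}$ and $\gamma_y^{\pm 1}$ in $\Phi(\bar X)$; each bracket has norm at most $K^{-2}$. Conditionally on $X$ and $\mathcal{P}_K$, the family $\{Z_x\}_{x\in \mathcal{P}_K}$ consists of independent and centred random vectors, so I would estimate
$$\Emu\bigl[\|R_K^{(2)}\|^2 \mid X, \mathcal{P}_K\bigr] \leq C\, K^{-4} \sum_{x \neq y} c_\Phi(x,y)^2,$$
reducing the problem to a polynomial bound in $K$ on $\sum_{x\neq y} c_\Phi(x,y)^2$. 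The higher-order contributions to $R_K$ are $r$-fold iterated Lie brackets with norms at most $K^{-r}$ and can be handled by the same strategy once the quadratic term is.

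The main obstacle will be the estimation of the crossing coefficients $c_\Phi(x,y)$. Because Brownian windings around a single point can be arbitrarily large and the word $\Phi(\bar X)$ can be very long, no pointwise bound is available; one must extract cancellations from the \emph{properness} of $\Phi$, ideally by relating $c_\Phi(x,y)$ to a bilinear winding-type functional of $X$ associated with the pair $\{x,y\}$, to which the integrability techniques developed in \cite{LAWA} can be applied. Once $R_K \to 0$ is established, continuity of $\exp$ gives $\omega_{\Phi,K}(\bar X) \to \exp(S_\infty)$ in distribution, which is a $1$-stable law on $G$; the limit is independent of $\Phi$ since $S_\infty$ is and the correction vanishes for every proper $\Phi$.
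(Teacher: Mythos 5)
The structural flaw is your claim that the non-abelian remainder $R_K$ vanishes: it does not, and it carries exactly the information that makes the limit what the theorem says it is. With probability bounded away from $0$ as $K\to\infty$, there are two distinct points $x,y\in\mathcal{P}_K$ with $|\theta(x)|,|\theta(y)|$ of order $K$ (the area of $\{z : |\theta(z)|\geq \epsilon K\}$ is of order $(\epsilon K)^{-1}$, so the number of such Poisson points is of order one). For such a pair, the windings are accumulated during a single tight excursion of $X$ near each point, so the occurrences of $x$ and of $y$ in $\Phi(\bar X)$ are essentially unshuffled and $|c_\Phi(x,y)|\approx|\theta(x)\,\theta(y)|\asymp K^2$; there is no cancellation to extract from properness. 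The single term $\tfrac12 c_\Phi(x,y)[Z_x,Z_y]$ then has norm of order $1$ (your own second-moment bound gives $K^{-4}c_\Phi(x,y)^2\asymp 1$), so $R_K\not\to 0$. Correspondingly, the limit in the theorem is \emph{not} $\exp(S_\infty)$: it is the endpoint $y(1)$ of the Cartan development of the jump-filled $1$-stable process, and for non-abelian $G$ the law of $y(1)$ differs from the pushforward of $\nu^\sigma$ by $\exp$, precisely because the finitely many macroscopic contributions enter as an ordered, non-commuting product of group elements --- these are the order-one brackets your scheme discards. Your argument does essentially settle the abelian case (where the paper, too, reduces to the winding numbers), but it cannot be repaired for general compact $G$ by sharper estimates on $c_\Phi(x,y)$; the flaw is in the ansatz, not in the estimates.

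There is also a quantitative gap upstream: writing the holonomy as $\exp(S_K+R_K)$ and organising $R_K$ as a controlled bracket series requires the sum of the norms of the factors to be small, whereas the length of the word $\Phi(\bar X)$ (in particular the conjugators created each time $X$ winds around a point without approaching it) is not $O(K)$. The paper handles both issues differently: the letters with large exponents are kept as group elements and the limit is identified as the development via Young/rough-path continuity (Proposition \ref{prop:orderedCauchy}, Corollary \ref{coro:4.7}); the remaining letters are shown to be negligible not by expanding the raw word, but after the hierarchical rewriting of $[\bar X]$ as a product of conjugates $c\,x^{\pm1}c^{-1}$ whose conjugators use only letters smaller than $x$ (Section \ref{sec:free}), ordered by $\|x\|$, combined with the half-turn bound of Proposition \ref{prop:bound12}, the winding estimates of Section \ref{sec:brown}, and the martingale/divide-and-conquer inequality of Proposition \ref{prop:tech}, which plays the role your bound on $\sum_{x\neq y}c_\Phi(x,y)^2$ was meant to play.
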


The limiting distribution $\nu^*$ that we call $1$-stable\footnote{The terminology \emph{stable-like} is sometimes preferred, because such laws lack a nice scaling property when the group is not abelian.} can be described as follows. Let $d$ denote the dimension of $G$ and
\[\sigma= \frac{\Gamma\big(\tfrac{d}{2})}{ 2 \sqrt{\pi} \Gamma\big(\tfrac{d+1}{2}\big)}.\]. Let $\nu^{\sigma}$ be the symmetric $1$-stable probability distribution on $\mathfrak{g}$ defined by its density with respect to the Lebesgue measure
\[
\frac{\d \nu^{\sigma}(Z)}{\d Z}=\frac{1}{C_1}\frac{1}{\sigma^d (1+\sigma^{-2}\|Z\|^2)^{\frac{d+1}{2}} }, \qquad C_1=\int_{\mathfrak{g}} \frac{1}{\sigma^d (1+\sigma^{-2}\|Z\|^2)^{\frac{d+1}{2}} }\d Z=\frac{\pi^{\frac{d+1}{2}} }{\Gamma(\frac{d+1}{2}) } .
\]
There exists then a $\mathfrak g$-valued symmetric $1$-stable process $\tilde{Y}$ such that $\tilde{Y}(1)$ is distributed according to $\nu^{\sigma}$. This process has jumps, but we can `fill' them with straight-line segments into a continuous curve $Y:[0,1]\to \mathfrak{g}$, defined up to increasing parametrization of $[0,1]$. Such a parametrization can \cite[Theorem 4.1]{Blumenthal} be chosen in such a way that $Y$ has finite $p$-variation for $p>1$. In particular, taking $p<2$ allows to define the Cartan development $y$ of $Y$ on $G$.
Then, $\nu^*$ is equal to the distribution of $y(1)$.

\subsection{Strategy of the proof }
In order to guide us through the proof, we first give an idea of why the main theorem holds, and of the main difficulty that arises, before giving a summary of the steps involved to solve it.

When a proper isomorphism has been fixed, the homotopy class $[\ell]\in \pi_1(\mathcal{P}_K)$ of a loop $\ell$ can be understood as a word on the letters $x,x^{-1}: x\in \mathcal{P}_K$. If we count algebraically each apparition of a letter $x$ in $[\ell]$, the relative integer that we obtain is equal to the winding number $\theta(x,\ell)$ of $\ell$ around the point $x$. If the order in which these letters appear were not important, the class $[\ell]$ would thus be equal to
\[x_1^{\theta(x_1,\ell)}\dots x_n^{\theta(x_n,\ell)},\]
where $\{x_1,\dots, x_n\}=\mathcal{P}_K$. In Section \ref{sec:simpler}, we will prove that Theorem  \ref{th:main} does hold if we replace $\omega_{\Phi,K}(\bar{X})$
with
\[g_1^{\theta(x_1,\ell)}\dots g_n^{\theta(x_n,\ell)},\]
where  $\{(x_1,g_1),\dots, (x_n,g_n)\}=\mathcal{P}_K^G$ is an ordering of $\mathcal{P}_K^G$, independent of $\mathcal{P}_K^G$ conditionally to $\mathcal{P}_K$. This will follow rather easily from the corresponding result with $G$ replaced with $\mathbb{R}$, and indeed proves the theorem in the case the group $G$ is abelian.

Let us now explain why it is reasonable to think that the order doesn't matter indeed.
First of all, with the kind of scaling that we have, and with the heavily tailed Cauchy distribution appearing, only the extremely large exponents (of the order of $K$) should contribute to the limit.
If a point appears once with an exponent of the order of $K$, it likely has a winding number which is at least of the order of $K$.
The number of such points in $\mathcal{P}$ is only of the order of $1$. Let us consider $x$ one of them. Then, the Brownian motion $X$ must go extremely close to $x$. Let us say that $[s,t]$ is some small amount of time such that $X_s$ and $X_t$ are relatively far from $x$, but during which $X$ goes very close to $x$. During that time, it might wind a large amount of time, say $\hat{\theta}(x)$, around $x$. Then, it is very unlikely that there is a \emph{second} period of time during which $X$ goes close to $x$, and since this is the only reasonable way for $X$ to accumulate windings around $x$, we can deduce that $\theta(x)\simeq \hat{\theta}(x)$.

During the same period $[s,t]$, the homotopy class of $X$ will vary a lot, it will start from say $g$ to $g h x^{\hat{\theta}(x)} h'$.
\footnote{To be slightly more specific, we are comparing here the homotopy class of $X_{|[0,s]}\cdot [X_s,X_0]$ with the homotopy class of $X_{|[0,t]}\cdot [X_t,X_0]$.}
Actually, it is possible to choose $s$ and $t$ so that
additionally $h=h'=1$, provided that the proper isomorphism is chosen so that $x$ corresponds to the path going straight ahead from $0$ to $x$, winding once around $x$, and then going back straight to $0$. If $x_1,\dots, x_k$ are these special points that appears once with a large exponent, we end up with a writing of $[\bar{X}]$ as
\begin{align*}
[\bar{X}]&=[X_{0,s_1}] x_1^{\hat{\theta}(x_1)} [X_{t_1,s_2}] x_2^{\hat{\theta}(x_2)} \dots, [X_{t_k,1}]\\
&= \big([X_{0,s_1}]x_1^{\hat{\theta}(x_1)}[X_{0,s_1}]^{-1}\big) \dots \big([X_{0,s_1}][X_{t_1,s_2}]\dots [X_{t_{k-1}},s_{k}]
x_k^{\hat{\theta}(x_k)}[X_{t_{k-1}},s_{k}]^{-1}\dots [X_{0,s_1}]^{-1}\big) \\
&\hspace{10cm} [X_{0,s_1}]\dots [X_{t_{k-1}},s_{k}] [X_{t_k,1}]
\end{align*}
where $X_{s,t}$ is the loop that start from $0$, goes straight to $X_s$, then follows $X$ up to $X_t$, and then goes straight to $0$. Since the loops $X_{t_i,s_{i+1}}$ do not have large winding around any point, it is expected that $ \omega_{\phi,K}([X_{0,s_1}]\dots [X_{t_k,1}])$ converges towards $0$ as $K\to \infty$. From the conjugation invariance of the variable associated with $x_1,\dots, x_k$, one can then expect that the distribution of $\omega_{\phi,K}([\bar{X}])$ is very close from the one of $g_1^{\hat{\theta}(x_1)}\dots g_k^{\hat{\theta}(x_k)}$, which itself is very close from $g_1^{\theta(x_1)}\dots g_n^{\theta(x_n)}$.

One of the main problems we face when trying to make this rigorous is to actually get some control on $\omega_{\phi,K}([X_{0,s_1}]\dots [X_{t_k,1}])$. The problem is that the length of the word corresponding to $[X_{0,s_1}]\dots [X_{t_k,1}]$ cannot be easily bounded, and we expect it to be way too large in general for a naive approach to be conclusive. What happens is that each time the Brownian motion does a `large winding' around $x\in \mathcal{P}$ (that is a winding during which it is not specifically close from $x$), what will appear in $[X_{t_{i},s_{i+1}}]$ is not only $x^{\pm 1}$, but instead a conjugate $c x^{\pm 1}c^{-1}$, where $c$ is a word whose length is of order $K$ in general. Though the variables corresponding to $x$ and $cxc^{-1}$ are identical in law, replacing one with the other would break the independence between the variables --as soon as $X$ does a second turn around $x$, we get both $cxc^{-1}$ and $c'x{c'}^{-1}$ appearing in $[X_{0,s_1}]\dots [X_{t_k,1}]$. This independence is needed to apply some law of large numbers and show that $\omega_{\phi,K}([X_{0,s_1}]\dots [X_{t_k,1}])$ converges to zero. We solve this problem by finding a hierarchical structure between the different points that allows simultaneously to keep some independence and to ignore the size of the conjugators $c$.


To be more specific, we will order the points in $\mathcal{P}=\{x_1,x_2,\dots\}$ and decompose the class $h=[\bar{X}]$ into a product
\[
h=h_1\dots h_n,
\]
and decompose further each $h_i$ as
\[h_i=h_{i,1}\dots h_{i,j_i}\]
in such a way that each $h_{i,j}$ is a conjugate of $x_i^{\pm 1}$, say $h_{i,j}=c_{i,j}x_i^{\pm 1} c_{i,j}^{-1}$, with only the letters $(x_k)_{k<i}$ allowed to appear in $c_{i,j}$.
The goal of Section~\ref{sec:free} is to show that such a decomposition is always possible, and to understand some of its aspects. 

Its only on Section \ref{sec:tech}, with Proposition \ref{prop:tech}, that we will study the probabilistic interest of such a decomposition. To each $x_i$ will be associated a random matrix $X_i\simeq 1$, and we will study the product $X_h=X_{i_1}\dots X_{i_k}$, where $h=x_{i_1}\dots x_{i_k}$.
Roughly speaking, we then manage to control the distance from $X_h$ to the identity in a way that depends only on the $j_i$, and with a bound which is almost the same as when the group is commutative.

The {geometric} interest of this decomposition is studied in Section \ref{sec:relations}. Here, we make a specific choice for the order between the points in $\mathcal{P}$. We order them depending on their distance to the origin. This choice allows to show that for each point $x_i$, the number $j_i$ cannot be larger than some number $\theta_{\frac{1}{2}}(x_i)$  which, roughly speaking, counts some number of half-turns of $X$ around $x_i$. The important thing is that this number $\theta_{\frac{1}{2}}(x_i)$ depends on $X$ and on $x_i$, but not on the other points of $\mathcal{P}$.

In Section \ref{sec:brown}, we finally use the fact that our path is Brownian. We will see that the quantity $\theta_{\frac{1}{2}}(x_i)$ turns out to be strongly related to the winding number $\theta(x_i)$, and to be roughly speaking of the order of $|\theta(x_i)|^2$. We will also consider the partition $\beta_{i,l}$ of $j_i$, defined by the indices $k$ such that the conjugators $c_{i,k}$ and $c_{i,k+1}$ are different. We will give various upper bounds (with large probability) on the highest values $\beta_{i,1},\beta_{i,2},\beta_{i,3},\dots$ and on the possible size of set of indices $i$ for which these values can be large.

In Section \ref{sec:end}, we will arrange these bounds together to finally prove the main theorem.

We did not yet mention Section \ref{sec:inv}. It is devoted to show that the distribution of the flat connection we define does not depend on the choice of the proper isomorphism, and that it is invariant by volume-preserving diffeomorphisms of $\R^2$.

\section{Invariance by volume-preserving diffeomorphism: from a point set to a connection}
In this section, we define a family of isomorphisms $\Phi$ between $\pi_1(\mathcal{P})$ and the free group $\mathbb{F}_{\mathcal{P}}$ with $\mathcal{P}$ generators. This family does not depend on any additional data, such as a Riemannian structure on the plane. We then show that the random connection $\omega_{\Phi,K}$ defined in the previous section does not depend, in distribution, from the isomorphism $\Phi$ chosen in this family. We will see that two of them are always related by the action of a braid, so that we should first understand how braids act on the distribution of random variables.

 \label{sec:inv}
\subsection{An easily lemma}
Let $B_n$ be the Artin braid group with $n$ strands,
\[ B_n=\langle b_1,\dots, b_{n-1}|  \forall i\leq n-2 : b_ib_{i+1}b_i=b_{i+1}b_i b_{i+1};  \forall i,j\leq n-1, |j-i|\geq 2: b_ib_j=b_jb_i \rangle.\]
It admits a group homomorphism onto the symmetric group $S_n$, which maps the generator $b_i$ to the permutation $\sigma(b_i)=\sigma_i=(i,i+1)$. We denote by $\sigma$ this projection.
It also admits a natural action on $G^n$, given by
\[ b_i\cdot(g_1,\dots, g_n)= (g_1,\dots, g_{i-1}, g_{i+1}, g_{i+1}^{-1}g_i g_{i+1}, \dots , g_n).\]

We propose here a simple lemma, which we do not use directly but is somehow the key in the proof of the volume-preserving diffeomorphism invariance that we prove.
\begin{lemma}
  Let $(X_1,\dots, X_n)$ be a family of independent $G$-valued random variables, each conjugation invariant. Then, the following equalities in distribution hold:
  \begin{itemize}
  \item for all $i\in \{1,\dots, n-1\}$,
  \[
  b_i\cdot(X_1,\dots, X_n)\overset{(d)}=(X_{\sigma_i(1)}, \dots, X_{\sigma_i(n)}).
  \]
  \item More generally, for all $b\in B_n$,
  \[
  b\cdot(X_1,\dots, X_n)\overset{(d)}=(X_{\sigma(b)(1)}, \dots, X_{\sigma(b)(n)}).
   \]
  \end{itemize}
\end{lemma}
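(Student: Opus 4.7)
The plan is to establish the first bullet directly from independence and conjugation invariance, and then to deduce the general statement by induction on the length of a word in the generators $b_1,\dots,b_{n-1}$.

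For the first bullet, fix $i\in\{1,\dots,n-1\}$. Since the coordinates outside positions $i$ and $i+1$ are unchanged and are independent of $(X_i,X_{i+1})$, it suffices to show the two-dimensional distributional identity
\[
(X_{i+1},\,X_{i+1}^{-1}X_iX_{i+1}) \overset{(d)}{=} (X_{i+1}, X_i).
\]
I would obtain this by conditioning on $X_{i+1}$: by independence, the conditional law of $X_i$ given $X_{i+1}=g$ is $\mathcal{L}(X_i)$, and by conjugation invariance $g^{-1}X_i g \overset{(d)}{=} X_i$ for every fixed $g\in G$. Integrating out $g$ gives the claimed joint identity, and tensoring with the unchanged coordinates gives the first bullet.

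For the second bullet, I would induct on the length $k$ of a word $b=b_{i_1}\cdots b_{i_k}$ in the generators (the base case $k=0$ being trivial). Writing $b=b_{i_1}\cdot b'$ with $b'=b_{i_2}\cdots b_{i_k}$, the induction hypothesis yields
\[
b'\cdot(X_1,\dots,X_n)\overset{(d)}{=}(X_{\pi(1)},\dots,X_{\pi(n)}),\qquad \pi:=\sigma(b').
\]
The key observation is that the right-hand side is still a tuple of \emph{independent} and \emph{conjugation-invariant} $G$-valued random variables, since permuting coordinates preserves both properties. Applying the deterministic map $b_{i_1}\cdot$ to both sides preserves the distributional equality, and then the base case applied to the permuted tuple $(X_{\pi(1)},\dots,X_{\pi(n)})$ gives
\[
b_{i_1}\cdot(X_{\pi(1)},\dots,X_{\pi(n)})\overset{(d)}{=}(X_{\pi(\sigma_{i_1}(1))},\dots,X_{\pi(\sigma_{i_1}(n))}).
\]
Using that $\sigma\colon B_n\to S_n$ is a group homomorphism, this is exactly $(X_{\sigma(b)(1)},\dots,X_{\sigma(b)(n)})$, completing the induction.

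The only mild subtlety is a bookkeeping check on the composition conventions (how $\sigma$ composes on the left versus the right, i.e.\ matching $\sigma(b_{i_1}b')=\sigma_{i_1}\circ\sigma(b')$ with the order in which the braid actions are performed), but this is purely notational and not a genuine obstacle. No probabilistic difficulty appears beyond the use of Fubini/conditioning in the base case.
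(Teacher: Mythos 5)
Your argument is essentially the paper's: the generator case by conditioning on $X_{i+1}$ and using conjugation invariance of $X_i$ (the paper writes the same Fubini computation explicitly), and the general case by induction on word length, with the key observation--also the one the paper makes--that a permuted tuple is again independent with conjugation-invariant marginals.

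There is one genuine omission: your induction runs over words $b=b_{i_1}\cdots b_{i_k}$ in the generators only, but the $b_i$ are not involutions in $B_n$, so positive words do not exhaust the group. A general braid is a word in the $b_i^{\pm 1}$, and the paper's recursion is explicitly over $b=b_{i_1}^{\epsilon_1}\cdots b_{i_k}^{\epsilon_k}$ with $\epsilon_j\in\{\pm 1\}$. To close this you need the analogue of the first bullet for $b_i^{-1}$, whose action is $(g_1,\dots,g_{i-1},\,g_ig_{i+1}g_i^{-1},\,g_i,\dots,g_n)$ and which projects to the same transposition $\sigma_i$; the identity $(X_iX_{i+1}X_i^{-1},X_i)\overset{(d)}=(X_{i+1},X_i)$ follows by the same conditioning argument, now using the conjugation invariance of $X_{i+1}$ rather than of $X_i$. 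With that one extra base case your induction goes through verbatim. As for the bookkeeping point you flag: it is real but shared with the paper. If one reads $b\cdot X$ as the left action $b_{i_1}\cdot(b'\cdot X)$, your induction actually produces the permutation $\sigma(b')\circ\sigma_{i_1}$, i.e.\ $\sigma(b)$ for the opposite composition convention (equivalently $\sigma(b)^{-1}$); since the variables are merely independent, not i.i.d., the two tuples can differ in law, so one should fix the convention once and for all. The paper glosses over this in exactly the same way, and it is harmless in the intended application, where conditionally on $\mathcal{P}_K$ the variables are i.i.d.
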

\begin{proof}
  For the first item, we need to prove is that for all $i\in \{1,\dots, n-1\}$,
  \[(X_1,\dots, X_{i-1},X_{i+1}, X_{i+1}^{-1}X_i X_{i+1}, X_{i+2},\dots, X_n )\overset{(d)}= (X_1,\dots X_{i-1}, X_{i+1},X_{i}, X_{i+2}, \dots X_n).
  \]
  From the independence assumption, this reduces to show that
  \[
  (X_{i+1}, X_{i+1}^{-1}X_i X_{i+1})\overset{(d)}= (X_{i+1},X_i).
  \]
  For $j\in \{i,i+1\}$, let $\mathbb{P}^j$ be the law of $X_j$. Then, for all bounded $\mathbb{P}^i\otimes \mathbb{P}^{i+1}$-measurable function $f$ from $G^2$ to $\R$,
  \begin{align*}
  \mathbb{E}[f(X_i,X_{i+1} )]&=
  \int_G \Big( \int_G f(u,v) \d \mathbb{P}^{i}_u \Big) \d \mathbb{P}^{i+1}_v\\
  &= \int_G \Big( \int_G f(v^{-1}uv,v) \d \mathbb{P}^{i}_u \Big) \d \mathbb{P}^{i+1}_v \qquad \mbox{(from conjugation invariance of $\mathbb{P}^i$)}\\
  &=
  \mathbb{E}[f(X_{i+1}^{-1}X_iX_{i+1}, X_{i+1})],
  \end{align*}
  which concludes the proof. Remark that we do not need the conjugation invariance of $X_n$ here.

  For a general $b\in B_n$, it suffices to write $b=b_{i_1}^{\epsilon_1}\dots b_{i_n}^{\epsilon_n}$, with the $\epsilon_i$ in $\{\pm 1\}$, and use a recursion. It is easily seen that $(X_{\sigma(1)}, \dots, X_{\sigma(n)} )$ satisfies the same assumptions as $(X_1,\dots X_n)$ (for which we do need the conjugation invariance of $X_n$), which allows to apply the first of the lemma and conclude the recursion.
\end{proof}
\subsection{Finite set of punctures}

The plane is endowed with its differential structure and orientation, and a point $0$ is fixed.
Let $\mathcal{P}$ be a finite subset of $\R^2 \setminus\{0\}$.

The fundamental group $\pi_1(\mathcal{P})=\pi_1(\R^2\setminus\mathcal{P},0)$ is a free group with $\# \mathcal{P}$ generators. A basis can be obtained by choosing $\#  \mathcal{P}$ non-intersecting and non self-intersecting smooth paths $(\gamma_x)_{x\in \mathcal{P}}$, where $\gamma_x$ is a path from $0$ to $x$ in $K$. Any such path $\gamma_x$ then determines a unique element $\ell_x$ in $\pi_1(\mathcal{P} )$, a representative of which is given by a path that goes from $0$ to a small neighbourhood of $x$ following $\gamma_x$, then turns once around $x$
in trigonometric order,
and finally goes back to $0$ following $\gamma$ backward.
The family $(\ell_x)_{x\in \mathcal{P}}$ then freely generates $\pi_1(\R^2\setminus\mathcal{P},0)$. Figure \ref{fig:isom} below represents a possible choice for the $\gamma_x$, and a path in one of the corresponding classes $\ell_x$. We will call \emph{proper} any bases obtained that way.
\begin{figure}[h!]
\includegraphics{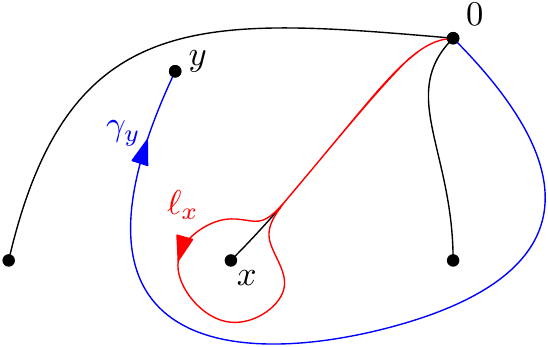}
\caption{\label{fig:isom} An example for $\# \mathcal{P}=4$.}
\end{figure}

We insist on the fact that the proper bases are not indexed by $\{1,\dots, \# \mathcal{P} \}$, but by $\mathcal{P}$.

A proper basis $\mathfrak{b}=([\ell_x])_{x\in \mathcal{P}}$ together with a family $(\omega_x)_{x\in \mathcal{P}}$ of elements of $G$ determine a unique group morphism $\omega^{\mathfrak{b}}$ from $\pi_1(\mathcal{P})$ to $G$ which is such that $\omega^{\mathfrak{b}}([\ell_x])=\omega_x$.
In order to understand how $\omega^{\mathfrak{b}}$ depends on $\mathfrak{b}$, one should first understand how proper bases are related one to the other. To this end, let us define $\mathfrak{B}$ the set of proper bases, and $\Aut$ the set of orientation-preserving homeomorphisms of $\R^2\setminus \mathcal{P}$ that maps $0$ to itself. For $\mathfrak{b}\in \mathfrak{B}$ associated with the paths $\gamma_x$, and $\phi\in \Aut$, let $\phi(\mathfrak{b})$ be the proper basis associated with the paths $\phi(\gamma_x)$.
\begin{lemma}
  The application $(\phi,\mathfrak{b})\mapsto \phi(\mathfrak{b})$ from $\Aut\!\times \mathfrak{B}$ to $\mathfrak{B}$ defines a transitive group action.
\end{lemma}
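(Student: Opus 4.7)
The group-action axioms are essentially definitional: unpacking, the paths underlying $(\phi \circ \psi)(\mathfrak{b})$ are the $\phi(\psi(\gamma_x))$, which coincide with those underlying $\phi(\psi(\mathfrak{b}))$, and the identity of $\Aut$ visibly fixes every proper basis. The substance of the lemma is therefore transitivity, on which I will focus.

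Given two proper bases $\mathfrak{b}_1, \mathfrak{b}_2$ with underlying path systems $(\gamma^1_x)_{x\in\mathcal{P}}$ and $(\gamma^2_x)_{x\in\mathcal{P}}$, the plan is to produce $\phi \in \Aut$ realising $\phi(\mathfrak{b}_1) = \mathfrak{b}_2$ in three steps. First, I will record the cyclic order in which the arcs $\gamma^i_x$ leave $0$ (a topological invariant, since any orientation-preserving homeomorphism fixing $0$ preserves the cyclic order of arcs emerging from it) and pick a bijection $\sigma$ of $\mathcal{P}$ identifying the order for $i=1$ with that for $i=2$; the homeomorphism $\phi$ will realise this $\sigma$. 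Second, I will set $T_i = \{0\} \cup \bigcup_x \gamma^i_x$ and build a homeomorphism $\phi_0 \colon T_1 \to T_2$ sending $0 \mapsto 0$, $x \mapsto \sigma(x)$, and mapping $\gamma^1_x$ onto $\gamma^2_{\sigma(x)}$ arc by arc. Third, I will extend $\phi_0$ to an orientation-preserving homeomorphism of the ambient plane and restrict it to $\R^2 \setminus \mathcal{P}$.

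The third step is where the main obstacle lies. I would carry it out by thickening each arc $\gamma^i_x$ into a narrow band, gluing these bands at $0$ along the recorded cyclic order, and observing that the resulting region $D_i$ is a topological closed disk having $T_i$ as a spine; the matching of cyclic orders then lets $\phi_0$ extend to an orientation-preserving homeomorphism $D_1 \to D_2$ via a band-by-band construction relying on the product structure of each band. By Alexander duality applied to the contractible $T_i$ (or equivalently, by the Jordan--Schönflies theorem applied iteratively to its arcs), the closure of the complement of $D_i$ in $S^2$ is a topological closed disk; hence the boundary homeomorphism $\partial D_1 \to \partial D_2$ already produced extends further to these complementary disks. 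Gluing the two extensions yields a homeomorphism of $S^2$ fixing $\infty$, and restricting to $\R^2 \setminus \mathcal{P}$ gives the required element of $\Aut$. The delicate point throughout is that the global extension be orientation-preserving, which is built in by the cyclic-order matching of the first step; a secondary technicality is ensuring that the band thickenings and their gluings can be chosen compatibly so that the boundary homeomorphism arising from the $D_i$-extension matches the one used to extend across the complement, which is a routine matter of choosing the thickenings adapted to $\phi_0$.
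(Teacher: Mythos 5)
Your proposal is correct in outline, but it takes a genuinely different route from the paper. The paper never thickens the trees: it prolongs each $\gamma_x$ slightly beyond its endpoint $x$ to a point $\tilde{x}$ on the boundary of a small ball $U_x$, joins consecutive points $\tilde{x}$ (in the cyclic order read off the boundary of the unbounded region) by disjoint arcs living far from the configuration, and thereby cuts the plane into finitely many Jordan domains, each bounded by two consecutive paths plus auxiliary arcs; Jordan--Schoenflies sends each such domain to a standard triangle, and gluing these maps (together with one for the unbounded region) carries an arbitrary proper basis to a standard model, whence transitivity by composition. Your argument instead decomposes $S^2$ as (regular neighbourhood of the spider $T_i$) $\cup$ (complementary disk): you build the homeomorphism on the spines, extend it over the ribbon neighbourhoods using the matched cyclic order at $0$, and extend over the complement by Schoenflies. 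The trade-off is clear: the paper's cutting avoids having to produce a disk regular neighbourhood of the tree at the vertex $0$ (where the smooth arcs may be mutually tangent, so the ribbon structure there needs an argument, which you acknowledge but do not carry out), at the cost of the auxiliary arcs near infinity; your route keeps all punctures inside the thickened tree and makes the orientation bookkeeping transparent, but leans on the band-by-band extension and on the compatibility of the two boundary extensions. Two small points you should make explicit: the extension over the complementary disk must be chosen to send $\infty$ to $\infty$ (e.g.\ compose with a homeomorphism of that disk fixing the boundary and moving the image of $\infty$ back, or extend over $\R^2\setminus \mathrm{int}\,D_1\cong S^1\times[0,\infty)$ directly), so that the restriction is a homeomorphism of $\R^2\setminus\mathcal{P}$; and the resulting $\phi$ maps $\mathcal{P}$ to itself only setwise, which is all the definition of $\Aut$ and of $\phi(\mathfrak{b})$ requires, since $\phi(\mathfrak{b}_1)$ assigns to the point $\sigma(x)$ the class of $\phi(\ell^1_x)=\ell^2_{\sigma(x)}$. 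With those remarks added, your sketch is at essentially the same level of rigour as the paper's own proof, and both ultimately rest on Jordan--Schoenflies.
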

\begin{proof}
  The fact that it is a group action is direct, only the transitivity needs to be shown. Figure \ref{fig:diffeo} below pictures the different steps of the proof.
  \begin{figure}[h]
  \includegraphics{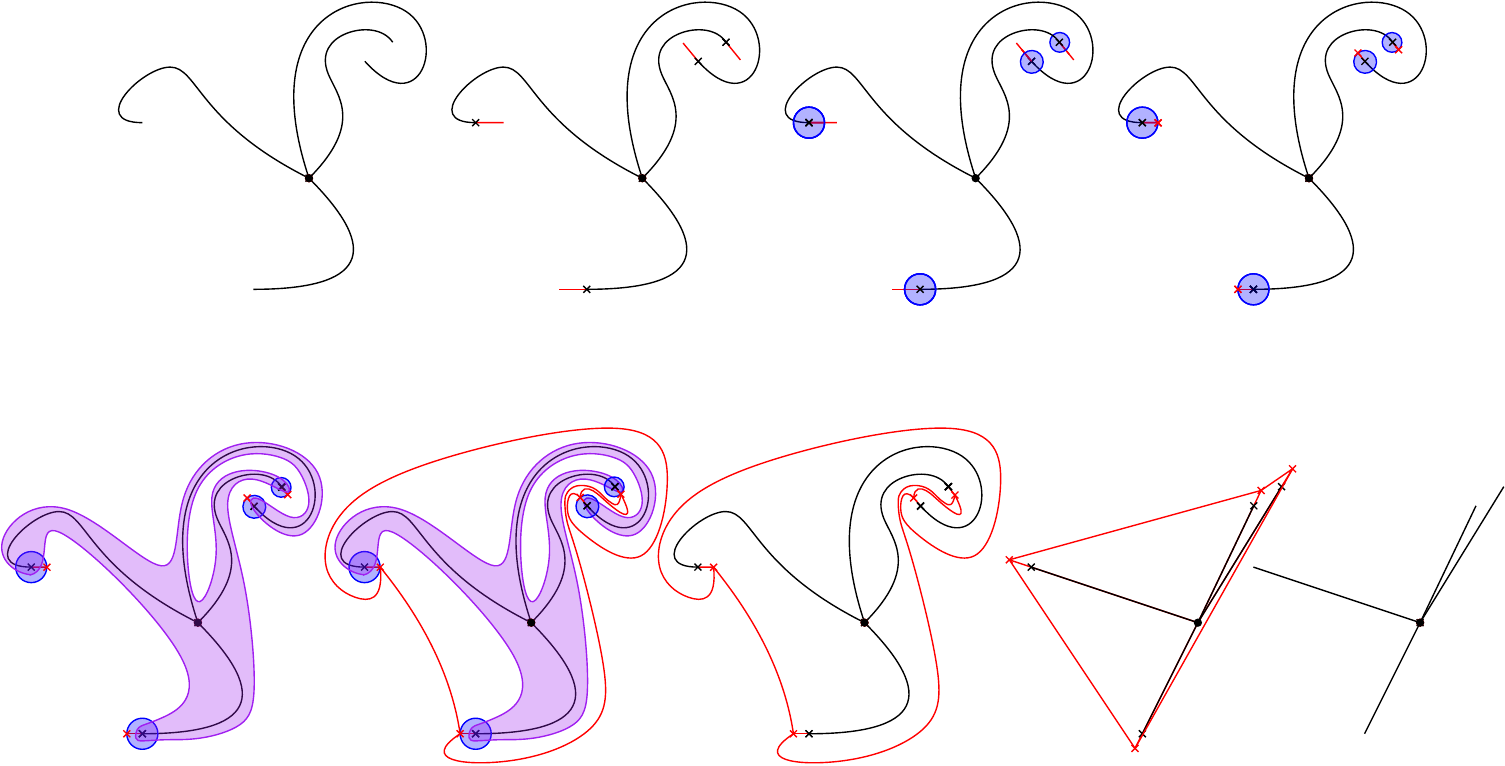}
  \caption{\label{fig:diffeo} Black lines: the different paths $\gamma_x$. Red lines: the geodesic continuations $\gamma'_x$. Blue circles: The neighbourhoods $U_x$. Red crosses: the points $\tilde{x}$. Purple set: the set $U_\infty$. Red curves: the curves $\tilde{\gamma}_x$. Eight picture: image of the previous one after application of an homeomorphism on each of the connected components.}
  \end{figure}

  We fix some Riemannian metric on $\R^2$. Then, for each $x\in \mathcal{P}$, let $\gamma'_x$ be the geodesic of length $\epsilon$ from $x$ with the same tangent vector as $\gamma'_x$ at $x$. For $\epsilon>0$ small enough, this geodesic does not cross any of the path $\gamma_x$ nor any other $\gamma'_y$.
  Let $U_x$ be small neighbourhoods of the points $x$, diffeomorphic to open balls, two by two disjoints, and small enough for $\gamma'_x$ to reach the boundary of $U_x$.
  For each $x$, let $t_x$ be the first time $\gamma'_x$ reaches the boundary of $U_x$. Let then $\tilde{x}=\gamma'_{x}(t_x)$ and $\gamma''(x)$ the restriction of $\gamma'(x)$ to $[0,t_x]$.

  Let then $\Gamma$ be a relatively compact neighbourhood of
  the union of the ranges of the $\gamma_x$, with a smooth boundary, and such that each of the $\tilde{x}$ lies the unbounded component delimited by $\Gamma$. Set $U_\infty=\R^2\setminus (\Gamma \cup \bigcup_{x\in \mathcal{P}} U_x)$.

  The boundary of $U_\infty$ is smooth by part, thus homeomorphic to a circle, which induces a cyclic order between the points $\tilde{x}$. Since $K_\infty$ is homeomorphic to $\R^2\setminus B(0,1)$, one can find for each $x$ a continuous path $\tilde{\gamma}_x$ from $\tilde{x}$ to its successor $s(\tilde{x})=\widetilde{s(x)}$, in such a way that these paths stay on $K_\infty$ and do not intersect each other.

  Then, for each $x$, the concatenation of $\gamma_x$, $\gamma''_x$, $\tilde{\gamma}_x$ and $\gamma_{s(x)}^{-1}$ is a continuous loop, and Jordan theorem ensures the existence of an homeomorphism from its interior to an open triangle. Applying this to each of these loop, and to the unbounded component delimited by the concatenation $\tilde{\gamma}_x \tilde{\gamma}_{s(x)} \tilde{\gamma}_{s^2(x)}\dots$, we obtain the desired homeomorphism.
\end{proof}



\begin{lemma}
Let $\Aut_0$ be the set of elements of $\Aut$ isotopic to the identity. Then, $\Aut_0$ acts trivially on $\mathfrak{B}$. That is, for every $\phi\in \Aut_0$ and $\mathfrak{b}\in \mathfrak{B}$, $\phi(\mathfrak{b})=\mathfrak{b}$. Stated otherwise, the action of $\Aut$ on $\mathfrak{B}$ passes to the quotient into a transitive action of the mapping class group $\pi_0(\Aut)=\Aut/\Aut_0$.
\end{lemma}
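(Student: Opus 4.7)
The plan is to reduce the lemma to the classical fact that a based homotopy of self-maps induces the same automorphism on $\pi_1$. I would start by picking an isotopy $(\phi_t)_{t \in [0,1]}$ in $\Aut$ from $\id$ to $\phi$, and first observe that each $\phi_t$ pointwise fixes $\mathcal{P}$. Indeed, each $\phi_t$ permutes the ends of $\R^2 \setminus \mathcal{P}$ corresponding to the punctures, i.e.\ permutes $\mathcal{P}$; this permutation depends continuously on $t$, takes values in the finite discrete group $\Sym(\mathcal{P})$, and equals the identity at $t=0$, hence is the identity throughout.

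Next I would translate the isotopy into an equality of induced maps on $\pi_1$. Since each $\phi_t$ fixes $0$, the restriction of $(\phi_t)$ to $\R^2 \setminus \mathcal{P}$ is a based homotopy between $\id$ and $\phi$, and therefore $\phi_* = \id_* = \id$ on $\pi_1(\R^2 \setminus \mathcal{P}, 0)$.

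It then remains to check that the action of $\phi$ on $\mathfrak{B}$ agrees with this induced map on each generator. For $\mathfrak{b} = ([\ell_x])_{x \in \mathcal{P}}$ built from paths $(\gamma_x)$, the class $\ell_x$ admits the representative loop $\gamma_x \cdot c_x \cdot \gamma_x^{-1}$, where $c_x$ is a small trigonometric loop around $x$. Applying $\phi$ gives $\phi(\gamma_x) \cdot \phi(c_x) \cdot \phi(\gamma_x)^{-1}$; since $\phi$ fixes $x$ and preserves orientation, $\phi(c_x)$ is again a trigonometric loop around $x$, which inside a small punctured disk around $x$ is homotopic rel basepoint to any standard small trigonometric loop used to build the class attached to $\phi(\gamma_x)$ in $\phi(\mathfrak{b})$. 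Thus the class in $\phi(\mathfrak{b})$ associated with $x$ is exactly $\phi_*([\ell_x]) = [\ell_x]$, so $\phi(\mathfrak{b}) = \mathfrak{b}$. The reformulation is immediate: $\Aut_0$ is the kernel of the natural map $\Aut \to \pi_0(\Aut)$, hence normal, so the $\Aut$-action on $\mathfrak{B}$ factors through $\pi_0(\Aut)$, and this quotient action inherits transitivity from the previous lemma.

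The main subtlety lies in the first step: a priori $\phi_t$ is only a homeomorphism of $\R^2 \setminus \mathcal{P}$, so ``$\phi_t(x) = x$'' must be interpreted either at the level of ends of $\R^2 \setminus \mathcal{P}$ or via the canonical extension obtained by Alexandroff-compactifying each isolated puncture. Once this point is cleanly settled, the rest of the argument is essentially standard bookkeeping in algebraic topology.
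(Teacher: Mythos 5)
Your argument is correct and takes essentially the same route as the paper: one picks an isotopy within $\Aut$ (hence fixing the basepoint $0$ throughout), obtains a based homotopy from each $\ell_x$ to $\phi(\ell_x)$, and concludes $[\ell_x]=[\phi(\ell_x)]$ generator by generator, the quotient statement being formal. Your extra verifications --- that each $\phi_t$ fixes every puncture and that the small trigonometric loop around $x$ is carried to one of the same based class --- merely make explicit points the paper's short proof leaves implicit, without changing the approach.
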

\begin{proof}
Let $\phi\in \Aut_0$, and $h$ be an isotopy from $\phi$ to the identity. Then, for all $x\in \mathcal{P}$, $\gamma_x=(\gamma_{x,s})_{s\in S^1}$ is homotopic to $\phi(\gamma_x)$ by
$(s,t)\mapsto h_t(\gamma_{x,s})$. Therefore, $[\gamma_x]=[\phi(\gamma_x)]$ and $\mathfrak{b}=\phi(\mathfrak{b})$.
\end{proof}
This allows us to deduce the following result.
\begin{lemma}
\label{le:invariance}
Let $\mathcal{P}$ be a finite set and $(\omega_x)_{x\in \mathcal{P}}$ be a family of independent $G$-valued random variable. Assume that each of the variables $\omega_x$ has a distribution which is invariant by conjugation. Then, the law of $\omega^{\mathfrak{b}}$ does not depend on the choice of the proper basis $\mathfrak{b}\in \mathfrak{B}$.
\end{lemma}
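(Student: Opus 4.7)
My plan is to combine the preceding two lemmas (which together show that $\pi_0(\Aut)$ acts transitively on $\mathfrak{B}$) with the sublemma from the start of the section. Since the law of $\omega^{\phi(\mathfrak{b})}$ depends only on the mapping class of $\phi$, it is enough to prove $\omega^{\phi(\mathfrak{b})} \stackrel{(d)}{=} \omega^{\mathfrak{b}}$ as $\phi$ ranges over a generating set of $\pi_0(\Aut)$. Fixing an enumeration $\mathcal{P} = \{x_1, \ldots, x_n\}$ compatible with the cyclic order of $\mathfrak{b}$, I would take as generators the half-twists $\phi_1, \ldots, \phi_{n-1}$, where $\phi_i$ performs a half-rotation inside a small disk containing exactly $x_i$ and $x_{i+1}$ (and disjoint from $0$), so that $\phi_i$ exchanges the two punctures. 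That such half-twists generate $\pi_0(\Aut)$ is the standard identification of the mapping class group of the punctured plane with the Artin braid group.

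Second, I would compute the effect of $\phi_i$ on the basis. Writing $W_{x_j} \coloneqq [\phi_i(\ell_{x_j}^{\mathfrak{b}})]$ and $e_j \coloneqq [\ell_{x_j}^{\mathfrak{b}}]$, a direct topological picture of the half-twist shows $W_{x_j} = e_j$ for $j \ne i, i+1$ and, in the standard convention, $W_{x_i} = e_{i+1}$ together with $W_{x_{i+1}} = e_{i+1}^{-1} e_i e_{i+1}$. In other words, $(W_{x_1}, \ldots, W_{x_n}) = b_i \cdot (e_1, \ldots, e_n)$, with $b_i$ acting via the exact same formulas as the action on $G^n$ recalled at the start of this section. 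Because $\phi_i(\mathfrak{b})$ is defined by the paths $\phi_i(\gamma_x)$, we have $\omega^{\phi_i(\mathfrak{b})}(W_{x_j}) = \omega_{\phi_i(x_j)} = \omega_{x_{\sigma_i(j)}}$.

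Third, I would apply the sublemma. Setting $Y_j \coloneqq \omega^{\phi_i(\mathfrak{b})}(e_j)$, any group morphism on the free group intertwines the braid action on $F_n$ with the braid action on $G^n$ (because both are given by the same word-level formulas), so
\[ b_i \cdot (Y_1, \ldots, Y_n) = \bigl( \omega^{\phi_i(\mathfrak{b})}(W_{x_1}), \ldots, \omega^{\phi_i(\mathfrak{b})}(W_{x_n}) \bigr) = (\omega_{x_{\sigma_i(1)}}, \ldots, \omega_{x_{\sigma_i(n)}}). \]
Hence $(Y_1, \ldots, Y_n) = b_i^{-1} \cdot (\omega_{x_{\sigma_i(1)}}, \ldots, \omega_{x_{\sigma_i(n)}})$, and the sublemma applied to $b_i^{-1}$ (whose image in $S_n$ is $\sigma_i$) identifies the distribution of the right-hand side with that of $(\omega_{x_{\sigma_i \circ \sigma_i(j)}})_j = (\omega_{x_j})_j$. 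Since this is the law of $(\omega^{\mathfrak{b}}(e_j))_j$ and a random morphism on $\pi_1(\mathcal{P})$ is determined by its values on a free basis, the conclusion $\omega^{\phi_i(\mathfrak{b})} \stackrel{(d)}{=} \omega^{\mathfrak{b}}$ follows.

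The main obstacle is the middle topological step: one must verify that the half-twists generate $\pi_0(\Aut)$ and that the geometric $\phi_i$ realizes the algebraic Artin formula on the nose. This is essentially routine once orientations and the base path are handled carefully. After that, the argument is a direct combination of the intertwining property with the sublemma, and the magic permutation cancellation $\sigma_i \circ \sigma_i = \id$ is what makes the identity work out cleanly without any hypothesis that the $\omega_{x}$ are identically distributed.
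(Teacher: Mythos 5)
Your per-generator argument is sound and is in the same spirit as the paper's proof (mapping class group $\cong B_n$, Artin action on the basis classes, then the independence plus conjugation-invariance swap); your direct use of the section's first lemma with $b_i^{-1}$ and the cancellation $\sigma_i\circ\sigma_i=\id$ is a nice packaging of the two-variable computation that the paper redoes by hand, and you are right that identical distribution is never needed. The genuine gap is the very first reduction, which you treat as a formality: ``it is enough to prove $\omega^{\phi(\mathfrak{b})}\overset{(d)}{=}\omega^{\mathfrak{b}}$ as $\phi$ ranges over a generating set of $\pi_0(\Aut)$.'' Knowing the identity for each generator applied to the single basis $\mathfrak{b}$ does not let you compose: from $\omega^{\phi_1(\mathfrak{b})}\overset{(d)}{=}\omega^{\mathfrak{b}}$ and $\omega^{\phi_2(\mathfrak{b})}\overset{(d)}{=}\omega^{\mathfrak{b}}$ you cannot deduce $\omega^{\phi_1\phi_2(\mathfrak{b})}\overset{(d)}{=}\omega^{\mathfrak{b}}$, because the law of $\omega^{\phi_1(\mathfrak{c})}$ is not a function of the law of $\omega^{\mathfrak{c}}$ and of $\phi_1$ alone (the variables are attached to the points, and applying $\phi_1$ both relabels them and rewrites the basis). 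This is exactly the noncommutativity flagged in the remark following the lemma in the paper, and it is why the paper's proof is organized as a recursion on the length of the braid word, with the induction hypothesis quantified over \emph{all} proper bases and with everything evaluated on the classes of the original basis; your proposal contains no substitute for that induction. A second, related point: your ``direct topological picture'' giving $W_{x_j}=e_j$ for $j\neq i,i+1$ and the clean Artin formula for the other two is only valid when the half-twist is adapted to the basis (a fixed small disk around $x_i,x_{i+1}$ may be crossed by other paths $\gamma_{x_j}$ of a general proper basis, or be entered repeatedly by $\gamma_{x_i},\gamma_{x_{i+1}}$), so the fixed generating set you choose for $\mathfrak{b}$ does not act by the Artin formula on the intermediate bases that arise when you iterate.

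The good news is that your per-move argument applies verbatim at \emph{every} proper basis $\mathfrak{c}$, provided the half-twist is chosen adapted to $\mathfrak{c}$ (the relabelled family is still independent and conjugation-invariant, which is all the sublemma needs). So the proof can be completed along your lines by adding the missing bookkeeping: either run the paper's recursion (peel one generator off the braid word, pass to the image basis, and re-express the original basis classes in terms of the new ones before invoking the swap), or show that any two proper bases are joined by a chain of basis-adapted half-twist moves, e.g.\ by writing the connecting mapping class as a word in $\mathfrak{b}$-adapted twists and conjugating each letter by the partial products, using that a conjugate $\chi g\chi^{-1}$ of a $\mathfrak{b}$-adapted twist is a $\chi(\mathfrak{b})$-adapted twist. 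Without one of these two arguments, the step ``reduce to generators'' is not justified, and it is precisely the non-routine part of the lemma.
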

\begin{proof}
Let $n=\# \mathcal{P}$. The mapping class group $\pi_0(\Aut)$ of the $n^{\mbox{\scriptsize{th}}}$ punctured plane is known to be isomorphic to the Artin braid group $B_n$ with $n$ strands (see e.g. \cite{Artin,braids}),
If we enumerate $\mathcal{P}=\{x_1,\dots, x_n\}$, then the isomorphism can be chosen such that the preimage of $b_i$ is given by the classes $c_{i}\in \pi_0(\Aut)$ which maps $[\ell_{x_j}]$ to itself for $j\notin \{i,i+1\}$, to $[\ell_{x_i+1}]$ if $j=i$, and to $[\ell_{x_{i+1}} \ell_{x_i}\ell_{x_{i+1}}^{-1} ]$ if $j=i+1$.
The action of such a braid on $\mathfrak{B}$ is pictured in Figure \ref{fig:homot} below.
\begin{figure}[h!]
\includegraphics{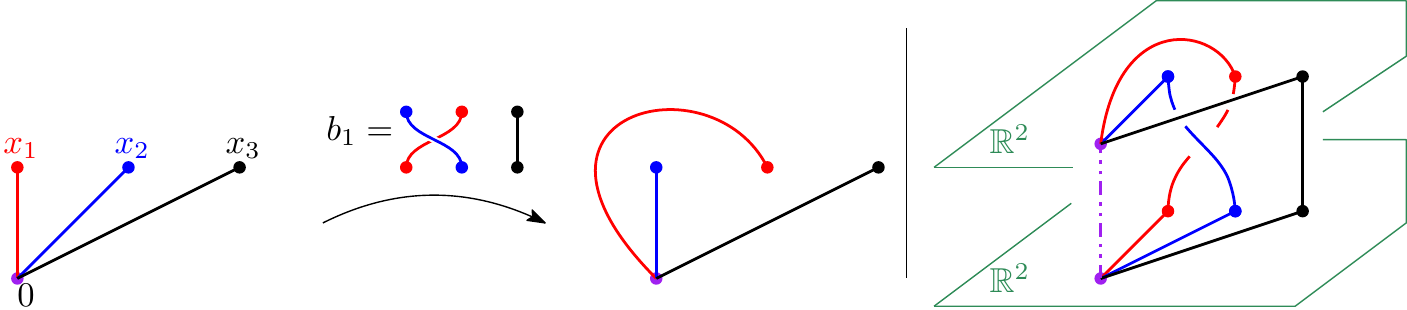}
\caption{\label{fig:homot} Two depictions of the braid $b_{1}$ acting on a proper basis.}
\end{figure}

Since the $b_i$ generates $B_n$, the $c_i$ generates $\pi_0(\Aut)$. Since the action of $\pi_0(\Aut)$ on $\mathfrak{B}$ is transitive, it suffices to show that for some proper basis $\mathfrak{b}=([\ell_x])_{x\in \mathcal{P}}$,
for all $b\in B_n$, $\omega^{\mathfrak{b}}$ and $\omega^{b(\mathfrak{b})}$ are equal in distribution. Since $\pi_1(\mathcal{P})$ is generated by the $\ell_x$ and both
$\omega^{\mathfrak{b}}$ and $\omega^{b(\mathfrak{b})}$ are group morphisms, it suffices to show
\begin{equation}
\label{eq:star}
\big(\omega^{\mathfrak{b}} ([\ell_{x_1}]),\dots, \omega^{\mathfrak{b}} ([\ell_{x_n }])\big)\overset{(d)}= \big(\omega^{b(\mathfrak{b})} ([\ell_{x_1}]),\dots, \omega^{b(\mathfrak{b})} ([\ell_{x_n }])\big).
\end{equation}
We show by recursion on $k$ that for \emph{all} proper basis $\mathfrak{b}=([\ell_x])_{x\in \mathcal{P}}$, for all $b=b^{\epsilon_1}_{i_1}\dots b^{\epsilon_k}_{i_k}\in B_n$, with the $\epsilon_i$ in $\{\pm 1\}$,
\[\big(\omega^{b(\mathfrak{b})} ([\ell_{x_1}]),\dots, \omega^{b(\mathfrak{b})} ([\ell_{x_n }])\big)\overset{(d)}= (\omega_{x_1},\dots, \omega_{x_n}).
\]
Let $b'= b^{\epsilon_1}_{i_1}\dots b^{\epsilon_{k-1}}_{i_{k-1}}$ and $\mathfrak{b}'=b^{\epsilon_k}_{i_k}(\mathfrak{b})=([\ell'_x])_{x\in \mathcal{P}}$ , so that $b(\mathfrak{b})=b'(\mathfrak{b}')$.

We assume $\epsilon_k=1$ for simplicity, the case $\epsilon_k=-1$ being deal with similarly. The elements of $\mathfrak{b}'$ are $[\ell'_x]=[\ell_x]$ for $x\neq x_{i_k}$ and $[\ell'_x]=[\ell_{x_{i_{k+1}}}\ell_{x_{i_{k}}}\ell_{x_{i_{k+1}}}^{-1}]$, so that
\begin{align*}
([\ell_{x_1}],\dots, [\ell_{x_n}])&=([\ell'_{x_1}],\dots , [\ell'_{x_{i_k-1}}],[{\ell'}^{-1}_{x_{i_{k+1}}}\ell'_{x_{i_{k}}}\ell'_{x_{i_{k+1}}}],   [\ell'_{x_{i_k+1}}],\dots, [\ell'_{x_n}]),\\
\shortintertext{From the recursion assumption, we know that}
\big(\omega^{b(\mathfrak{b} )} ([\ell'_{x_1}]),\dots, \omega^{b(\mathfrak{b} )} ([\ell'_{x_n }])\big)&=
\big(\omega^{b'(\mathfrak{b}' )} ([\ell'_{x_1}]),\dots, \omega^{b'(\mathfrak{b}')} ([\ell'_{x_n }])\big)\overset{(d)}=(\omega_{x_1},\dots, \omega_{x_n}),\\
\shortintertext{hence}
\big(\omega^{b(\mathfrak{b} )} ([\ell_{x_1}]),\dots, \omega^{b(\mathfrak{b} )} ([\ell_{x_n }])\big)&\overset{(d)}=( \omega_{x_1}, \dots, \omega_{x_{i_k-1}}, \omega_{x_{i_k+1}}^{-1}\omega_{x_{i_k}}\omega_{x_{i_k+1}}, \omega_{x_{i_k+1}}, \dots, \omega_{x_n}).\\
\shortintertext{Thus, we are left to show that}
(\omega_{x_1},\dots, \omega_{x_n}) &\overset{(d)}=(\omega_{x_1},\dots, \omega_{x_{i-1}}, \omega_{x_{i+1}}^{-1}\omega_{x_{i}} \omega_{x_{i+1}}, \omega_{x_{i+1}},\dots, \omega_{x_n}).\\
\shortintertext{From independence, it suffices to show that}
(\omega_{x_i},\omega_{x_{i+1}})&\overset{(d)}= (\omega_{x_{i+1}}^{-1}\omega_{x_{i}} \omega_{x_{i+1}}, \omega_{x_{i+1}} ).
\end{align*}
For $j\in \{i,i+1\}$, let $\mathbb{P}^j$ be the law of $\omega_{x_j}$. Then, for all bounded $\mathbb{P}^i\otimes \mathbb{P}^{i+1}$-measurable function $f$ from $G^2$ to $\R$,
\begin{align*}
\mathbb{E}[f(\omega_{x_i},\omega_{x_{i+1}})]&=
\int_G \Big( \int_G f(u,v) \d \mathbb{P}^{i}_u \Big) \d \mathbb{P}^{i+1}_v\\
&= \int_G \Big( \int_G f(v^{-1}uv,v) \d \mathbb{P}^{i}_u \Big) \d \mathbb{P}^{i+1}_v \qquad \mbox{(from conjugation invariance of $\mathbb{P}^i$)}\\
&=
\mathbb{E}[f(\omega_{x_{i+1}}^{-1}\omega_{x_{i}}\omega_{x_{i+1}}, \omega_{x_{i+1}})].
\end{align*}
Thus  $(\omega_{x_i},\omega_{x_{i+1}})\overset{(d)}= (\omega_{x_{i+1}}^{-1}\omega_{x_{i}} \omega_{x_{i+1}}, \omega_{x_{i+1}} )$, which concludes the recursion and hence the proof.


%
%
%
%
\end{proof}
\begin{remark}
We carefully avoid to write $ \big(\omega^{b(\mathfrak{b})} ([\ell_{x_1}]),\dots, \omega^{b(\mathfrak{b})} ([\ell_{x_n }])\big)$ as a function of $b$ and $\big(\omega^{\mathfrak{b}} ([\ell_{x_1}]),\dots, \omega^{\mathfrak{b}} ([\ell_{x_n }])\big)$.
It seems that no simple expression for it can be given (or at least we failed to find it), because of the lack of commutativity between the actions of the Braid group and the symmetric group.
\end{remark}
This lemma will allow us to chose a specific proper basis $\mathfrak{b}$ without breaking the gauge invariance or the invariance by volume-preserving diffeomorphisms, provided that this choice of basis is independent from $\mathcal{P}_K^G$ conditional to $\mathcal{P}_K$.
Though, we first need to extend our definitions to the case when $\mathcal{P}$ is no longer finite but only locally finite.

\subsection{Locally finite set of punctures}
\begin{definition}
\label{def:proper}
Let $\mathcal{P}$ be a locally finite subset of $\R^2$. For all $x\in \mathcal{P}$, let $\gamma_x$ be a path from $0$ to $x$, with no-self-intersection, and such that for all $x,y\in \mathcal{P}, x\neq y$, $\gamma_x$ and $\gamma_y$ only intersects at $0$. Let $\ell_x$ be the loop defined as in the beginning of the section.

We say that the family $([\ell_x])_{x\in \mathcal{P}}$ is \emph{proper} if it generates $\pi_1(\mathcal{P})$. We set $\mathfrak{B}$ the set of proper basis.
\end{definition}
\begin{remark}
The last condition is \textbf{not} superfluous. The subtlety is that the group generated by $([\ell_x])_{x\in \mathcal{P}}$ is a direct limit of groups, whilst $\pi_1(\mathcal{P})$ is an inverse limit, in general larger. As a counter example, one can consider the case pictured in Figure \ref{figure:counter} below.

Besides, neither the direct limit of the finite braid groups, nor $\pi_0(\Aut(\R^2\setminus\mathcal{P}))$ acts transitively on $\mathfrak{B}$ anymore. Figure \ref{fig:counter2} shows two proper basis not related by any such braids.
\end{remark}
\begin{figure}[h]
\includegraphics{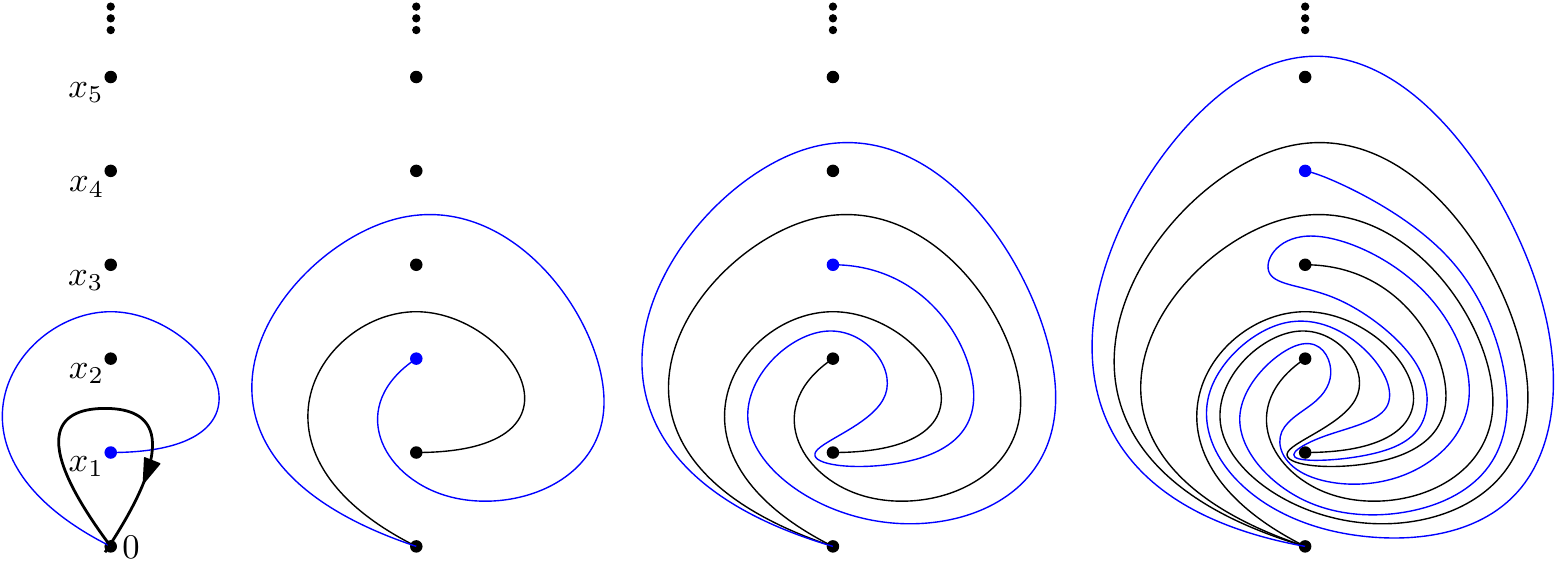}
\caption{\label{figure:counter} When $\mathcal{P}$ is no longer finite, the family $([\ell_x])_{x\in \mathcal{P}}$ can be free but not generating. In this example, the paths $\gamma_{x_i}$ are drawn one after the other. The path going to $x_{i+1}$ passes above the point $x_{i+2}$, and then navigates until it reaches $x_{i+1}$. The black loop which appears on the first picture does not lie on the group generated algebraically by the $[\ell_{x_i}]$. Formally, it would be describe as $w [\ell_{x_1}] w^{-1}$ with $w$ a bi-infinitely long word in the $[\ell_{x_i}]$.}
\includegraphics{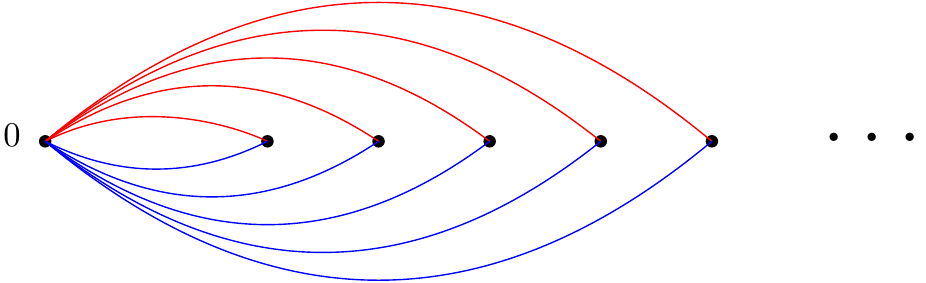}
\caption{\label{fig:counter2} The two proper basis pictured on red and blue are not related by the action of any braid. If such a braid were to exists, it would have to map the $n^{\mbox{\scriptsize{th}}}$ points to the  `$(\infty-n)^{\mbox{\scriptsize{th}}}$' one.}
\end{figure}
A proper basis  $\mathfrak{b}=([\ell_x])_{x\in \mathcal{P}}$ induces an isomorphism between $\pi_1(\mathcal{P})$ and the free group $\mathbb{F}_{ \mathcal{P} }$, and therefore allows to extend the definition of $\omega^{\mathfrak{b}}$ to the case when $\mathcal{P}$ is locally finite.

\begin{lemma}
\label{le:invariance:inf}
Lemma \ref{le:invariance} can be extended to locally finite subsets $\mathcal{P}$.
\end{lemma}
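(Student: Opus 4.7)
The strategy is to reduce to the finite case of Lemma~\ref{le:invariance} by a finite-dimensional marginal argument. Since the law of a random group homomorphism $\omega^{\mathfrak{b}}:\pi_1(\mathcal{P})\to G$ is determined by the joint laws of $(\omega^{\mathfrak{b}}(\ell_i))_{i=1}^k$ for arbitrary finite tuples $(\ell_1,\dots,\ell_k)\in\pi_1(\mathcal{P})^k$, I fix two proper bases $\mathfrak{b}=([\ell_x])_{x\in\mathcal{P}}$ and $\mathfrak{b}'=([\ell'_x])_{x\in\mathcal{P}}$, a tuple $(\ell_1,\dots,\ell_k)$, and aim to show equality in distribution between $(\omega^{\mathfrak{b}}(\ell_i))_{i=1}^k$ and $(\omega^{\mathfrak{b}'}(\ell_i))_{i=1}^k$. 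By properness, each $\ell_i$ admits a \emph{finite} expansion as a word in the generators $[\ell_x]^{\pm 1}$, and similarly in the $[\ell'_x]^{\pm 1}$. Let $\mathcal{P}_0\subset\mathcal{P}$ denote the finite set of all letters appearing in any of these $2k$ expansions.

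From the restricted families of paths $(\gamma_x)_{x\in\mathcal{P}_0}$ and $(\gamma'_x)_{x\in\mathcal{P}_0}$ underlying $\mathfrak{b}$ and $\mathfrak{b}'$, I build two proper bases $\tilde{\mathfrak{b}}_0$ and $\tilde{\mathfrak{b}}'_0$ of the finite set $\mathcal{P}_0$: the non-intersection conditions survive restriction, and, as recalled in the first paragraph of the section, in the finite case the resulting loops automatically generate $\pi_1(\mathcal{P}_0)$. The open inclusion $\mathbb{R}^2\setminus\mathcal{P}\hookrightarrow\mathbb{R}^2\setminus\mathcal{P}_0$ induces a homomorphism $\iota:\pi_1(\mathcal{P})\to\pi_1(\mathcal{P}_0)$ which sends each generator $[\ell_x]$ of $\mathfrak{b}$ (for $x\in\mathcal{P}_0$) to the corresponding generator of $\tilde{\mathfrak{b}}_0$, and likewise in the primed case. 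Consequently, the word expressing $\ell_i$ in $\mathfrak{b}$ is, letter by letter, the word expressing $\iota(\ell_i)$ in $\tilde{\mathfrak{b}}_0$, which yields
\[\omega^{\mathfrak{b}}(\ell_i)=\omega^{\tilde{\mathfrak{b}}_0}(\iota(\ell_i))\quad\text{and}\quad\omega^{\mathfrak{b}'}(\ell_i)=\omega^{\tilde{\mathfrak{b}}'_0}(\iota(\ell_i)).\]

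Applying Lemma~\ref{le:invariance} to the finite set $\mathcal{P}_0$ with the (automatically independent and conjugation-invariant) subfamily $(\omega_x)_{x\in\mathcal{P}_0}$ then gives $\big(\omega^{\tilde{\mathfrak{b}}_0}(\iota(\ell_i))\big)_{i=1}^k\overset{(d)}=\big(\omega^{\tilde{\mathfrak{b}}'_0}(\iota(\ell_i))\big)_{i=1}^k$, and the conclusion follows. The only delicate point is the use of properness: the remark following Definition~\ref{def:proper} shows that without it an element of $\pi_1(\mathcal{P})$ need not be a finite word in the $[\ell_x]^{\pm 1}$, which would destroy the reduction to a finite subfamily of the $\omega_x$; granted this, the rest of the argument is essentially bookkeeping.
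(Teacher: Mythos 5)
Your argument is correct and follows essentially the same route as the paper: reduce to finite-dimensional marginals, expand the test elements as finite words in the generators, collect the finitely many letters involved into a finite subset, restrict both proper bases to that subset, and invoke Lemma \ref{le:invariance} via the canonical projection $\pi_1(\mathcal{P})\to\pi_1(\mathcal{P}_0)$. The only cosmetic difference is that you expand arbitrary elements $\ell_i$ in both bases, whereas the paper evaluates at the generators of $\mathfrak{b}$ and expands them only in $\mathfrak{b}'$; this changes nothing of substance.
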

\begin{proof}
During this proof, for a subset $\tilde{\mathcal{P}}\subseteq \mathcal{P}$, we write $[\ell_x]_{\tilde{\mathcal{P}}}$ the homotopy class of $\ell_x$ on $\R^2\setminus \tilde{\mathcal{P}}$.

Let $\mathfrak{b}=([\ell_x])_{x\in \mathcal{P}}$ and $\mathfrak{b}'$ be two proper basis.
To show that $(\omega^{\mathfrak{b}}([\ell]))_{\ell \in \pi_1(\mathcal{P})}\overset{(d)}=(\omega^{\mathfrak{b}'} ([\ell]) )_{\ell \in \pi_1(\mathcal{P})}$,
it suffices to show that for any \emph{finite} subset $\mathcal{P}'\subseteq \mathcal{P}$, \[(\omega^{\mathfrak{b}}([\ell_x]))_{x\in \mathcal{P}'}\overset{(d)}=(\omega^{\mathfrak{b}'}([\ell_x]))_{x\in \mathcal{P}'}.\]
Remark the left-hand side is equal to $(\omega_x)_{x\in \mathcal{P}'}$.

Since $\mathfrak{b}'$ is a generating family for $\pi_1(\mathcal{P})$, for all $x\in \mathcal{P}'$, there exists $n_x\in \mathbb{N}$, $y_{x,1},\dots, y_{x,n_x}\in \mathcal{P}$ and $\epsilon_{x,1},\dots, \epsilon_{x,n_x}\in \{\pm 1\}$ such that
\[ [\ell_x]_{\mathcal{P}}=[\ell'_{y_{x,1}}]_{\mathcal{P}}^{\epsilon_{x,1}}\dots [\ell'_{y_{x,n_x}}]_{\mathcal{P}}^{\epsilon_{x,n_x}}.\]
Then, \[ \omega^{\mathfrak{b}'}([\ell_x])=\omega_{y_{x,1}}^{\epsilon_{x,1}}\dots \omega_{y_{x,n_x}}^{\epsilon_{x,n_x}}.\]
Let $\mathcal{P}''$ be the finite set $\{ y_{x,i}, x\in \mathcal{P}', i\in \{1,\dots, n_x\}  $.
The homotopy class of $\ell_x$ in $\pi_1(\mathcal{P}'')$ is given by
\[ [\ell_x]_{\mathcal{P}''}=\pi( [\ell_x]_{\mathcal{P}})=\pi([\ell'_{y_{x,1}}]_{\mathcal{P}}^{\epsilon_{x,1}}\dots [\ell'_{y_{x,n_x}}]_{\mathcal{P}}^{\epsilon_{x,n_x}} )  =[\ell'_{y_{x,1}}]_{\mathcal{P}''}^{\epsilon_{x,1}}\dots [\ell'_{y_{x,n_x}}]_{\mathcal{P}''}^{\epsilon_{x,n_x}},\]
where $\pi$ is the canonical projection $[\ell]_{\mathcal{P}}\mapsto[\ell]_{\mathcal{P}''}$ from $\pi_1(\mathcal{P})$ to $\pi_1(\mathcal{P}'')$.

Let $\mathfrak{b}_{|\mathcal{P}''}=([\ell_x]_{\mathcal{P}''})_{x\in\mathcal{P}'' }$ and $\mathfrak{b'}_{|\mathcal{P}''}=([\ell'_x]_{\mathcal{P}''})_{x\in\mathcal{P}'' }$, which are two proper basis for the set $\mathcal{P}''$.
Since $\mathcal{P}''$ is finite, we can apply Lemma \ref{le:invariance}, which states that $\big(\omega^{\mathfrak{b}_{|\mathcal{P}''}}
([\ell_x]_{\pi_1(\mathcal{P}'')    })
  \big)_{ x\in \mathcal{P}'}\overset{(d)}= \big(\omega^{\mathfrak{b'}_{|\mathcal{P}''}}([\ell_x]_{\pi_1(\mathcal{P}'')    })  \big)_{ x\in \mathcal{P}'} $,
but these are nothing else than
$(\omega_x)_{x\in \mathcal{P}'}$ and $(\omega^{\mathfrak{b}'}([\ell_x]) )_{ x\in \mathcal{P}'}$.
\end{proof}

\begin{definition}
Assume that $\R^2$ is endowed with an Euclidean structure, and that no two points of $\mathcal{P}$ are colinear with $0$.
Then, a proper basis $\mathfrak{b}_0$ is obtained by taking, for all $x\in \mathcal{P}$, as the path $\gamma_x$, the straight line segment from $0$ to $x$.


We then set $\omega_K=\omega^{\mathfrak{b}_0}_K$.
\end{definition}
In the following sections, we will identify $\pi_1(\mathcal{P})$ with $\mathbb{F}_{\mathcal{P}}$ using this proper basis $\mathfrak{b}_0$.

\begin{remark}
For any locally finite set of point $\mathcal{P}$, there exists a Euclidean structure such that this condition is fullfilled, so that $\mathfrak{B}$ is always non-empty.

For a given Euclidean structure and for $\mathcal{P}=\mathcal{P}_K$ a Poisson random set whose intensity admits a density with respect to the Lebesgue measure, the condition that no two points are colinear is fullfilled with probability $1$. In the following, we always assume this full probability event to hold.
\end{remark}

In the following proposition, we no longer consider the set $\mathcal{P}$ nor the family $(\omega_x)_{x\in \mathcal{P}}$ to be fixed. For $\mathcal{P}^G=\{(x,\omega_x)\}_{x\in \mathcal{P}}$, we write $\omega(\mathfrak{b},\mathcal{P}^G)$ for the map previously written $\omega^{\mathfrak{b}}$.
\begin{proposition}
The distribution of $\omega(\mathfrak{b},\mathcal{P}^G_K)$ does not depend on the $\sigma(\mathcal{P}_K)$-measurable proper basis $\mathfrak{b}$.

Besides, it is invariant by volume-preserving diffeomorphisms.
To be more specific, for all deterministic volume-preserving diffeomorphism $\phi$ of $\R^2$ such that $\phi(0)=0$, and for all continuous functions $\ell_1,\dots, \ell_k:[0,1]\to \R^2$ with $\ell_i(0)=\ell_i(1)=0$ for $i\in \{1,\dots, k\}$, and whose ranges has vanishing Lebesgue measure,
\[
\big(\omega(\mathfrak{b},\mathcal{P}^G_K)[\ell_1], \dots,\omega(\mathfrak{b},\mathcal{P}^G_K)[\ell_k]\big)\overset{(d)}=
\big(\omega(\mathfrak{b},\mathcal{P}^G_K)[\phi\circ \ell_1] , \dots, \omega(\mathfrak{b},\mathcal{P}^G_K)[\phi\circ \ell_k] \big).
\]
\end{proposition}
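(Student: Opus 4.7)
The plan is to derive both claims from Lemma \ref{le:invariance:inf} by exploiting the rotational symmetry of $\mu_K$ and the invariance of Lebesgue measure under $\phi$.

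For the first claim, I would condition on $\mathcal{P}_K$. Since the intensity of $\mathcal{P}_K^{\mathfrak{g}}$ factorises as $K \lambda \otimes \mu_K$, the variables $\{h(x)\}_{x \in \mathcal{P}_K}$ are conditionally i.i.d.\ with law $\exp_* \mu_K$. Because the scalar product on $\mathfrak{g}$ is bi-invariant, $\mu_K$ is invariant under the adjoint action of $G$, and hence $\exp_* \mu_K$ is conjugation-invariant on $G$. Since $\mathfrak{b}$ is $\sigma(\mathcal{P}_K)$-measurable, conditional on $\mathcal{P}_K$ it is deterministic, so the hypotheses of Lemma \ref{le:invariance:inf} are met and yield the conditional equality in distribution; taking expectations over $\mathcal{P}_K$ concludes.

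For the second claim, introduce $\tilde{\phi}: \R^2 \times \mathfrak{g} \to \R^2 \times \mathfrak{g}$, $\tilde{\phi}(x, Z) := (\phi(x), Z)$. Since $\phi$ preserves Lebesgue measure, $\tilde{\phi}$ preserves the intensity $K\lambda \otimes \mu_K$ of $\mathcal{P}_K^{\mathfrak{g}}$, so $\tilde{\phi}(\mathcal{P}_K^G) \overset{(d)}{=} \mathcal{P}_K^G$. Being an orientation-preserving homeomorphism fixing $0$, $\phi$ induces a group isomorphism $\phi_*: \pi_1(\R^2 \setminus \mathcal{P}_K, 0) \to \pi_1(\R^2 \setminus \phi(\mathcal{P}_K), 0)$; if $\mathfrak{b} = ([\ell_x])_{x \in \mathcal{P}_K}$ comes from paths $(\gamma_x)$, then $\phi_* \mathfrak{b} := ([\phi \circ \ell_x])_{\phi(x) \in \phi(\mathcal{P}_K)}$ coming from $(\phi \circ \gamma_x)$ is a proper basis for the deformed point set (the non-intersection conditions are preserved under a diffeomorphism, and the generation condition transports via $\phi_*$). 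By construction the holonomies are compatible:
\[
\omega(\phi_* \mathfrak{b}, \tilde{\phi}(\mathcal{P}_K^G))[\phi \circ \ell_i] \;=\; \omega(\mathfrak{b}, \mathcal{P}_K^G)[\ell_i]
\]
almost surely and jointly in $i \in \{1, \dots, k\}$, since both morphisms assign the value $h(x)$ to generators identified by $\phi_*$. The hypothesis that each $\ell_i$ has range of zero Lebesgue measure ensures $\phi \circ \ell_i$ does too, so that almost surely both loops avoid $\mathcal{P}_K$ and the holonomies are well defined.

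To conclude, write $\mathfrak{b} = F(\mathcal{P}_K)$ for a measurable $F$ and set $F'(\mathcal{Q}) := \phi_* F(\phi^{-1}(\mathcal{Q}))$, so that $\phi_* \mathfrak{b} = F'(\phi(\mathcal{P}_K))$ is a $\sigma(\phi(\mathcal{P}_K))$-measurable proper basis. The distributional identity $\tilde{\phi}(\mathcal{P}_K^G) \overset{(d)}{=} \mathcal{P}_K^G$ then gives
\[
\big(\omega(F'(\phi(\mathcal{P}_K)), \tilde{\phi}(\mathcal{P}_K^G))[\phi \circ \ell_i]\big)_{i=1}^k \;\overset{(d)}{=}\; \big(\omega(F'(\mathcal{P}_K), \mathcal{P}_K^G)[\phi \circ \ell_i]\big)_{i=1}^k,
\]
and the first part of the proposition rewrites the right-hand side as $(\omega(\mathfrak{b}, \mathcal{P}_K^G)[\phi \circ \ell_i])_i$, whence the desired invariance. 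The main bookkeeping obstacle is verifying that $\phi_* \mathfrak{b}$ satisfies the generation condition of Definition \ref{def:proper} in the locally finite setting, where generating no longer follows automatically from freeness; this step uses that $\phi$ is a homeomorphism and thus induces a genuine isomorphism of fundamental groups (not merely a morphism onto a proper subgroup).
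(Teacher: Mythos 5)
Your proposal is correct and follows essentially the same route as the paper: conditioning on $\mathcal{P}_K$ and invoking Lemma \ref{le:invariance:inf} for the first claim, then for the second using that $\tilde{\phi}$ preserves $K\lambda\otimes\mu_K$, that $\phi$ pushes a $\sigma(\mathcal{P}_K)$-measurable proper basis to another one, and the almost-sure compatibility $\omega(\phi_*\mathfrak{b},\tilde{\phi}(\mathcal{P}_K^G))[\phi\circ\ell]=\omega(\mathfrak{b},\mathcal{P}_K^G)[\ell]$, before applying the basis-independence already proved. The extra bookkeeping you supply (conjugation invariance of $\exp_*\mu_K$ via $\Ad$-invariance of $\mu_K$, the map $F'$, and the check that $\phi_*\mathfrak{b}$ is still generating in the locally finite setting) only makes explicit steps the paper leaves implicit.
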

Remark that the vanishing range assumption is necessary for $\omega(\mathfrak{b},\mathcal{P}^G_K)[\ell_i]$ to be almost surely well-defined.
\begin{proof}
Let $\mathfrak{b}, \mathfrak{b}'$ be two  $\sigma(\mathcal{P}_K)$-measurable proper basis. For $x\in \mathcal{P}_K$, let $\omega_x\in G$ be the unique element such that $(x,\omega_x)\in \mathcal{P}_K^G$. Then, conditionally on $\sigma(\mathcal{P}_K)$, the variables $(\omega_x)_{x\in \mathcal{P}_K}$ are independent and each have a law invariant by conjugation. We can therefore apply Lemma \ref{le:invariance:inf}. We concluded that conditionally on $\sigma(\mathcal{P}_K)$,
$\omega(\mathfrak{b},\mathcal{P}^G_K)$ and $\omega(\mathfrak{b}',\mathcal{P}^G_K)$ have the same distributions. Integrating on $\sigma(\mathcal{P}_K)$, we deduce that
$\omega(\mathfrak{b},\mathcal{P}^G_K)$ and $\omega(\mathfrak{b}',\mathcal{P}^G_K)$ have the same distributions.

 Let now $\phi$ be a volume-preserving diffeomorphism. Then, $\tilde{\phi}=\phi\times \id_G$ preserves the measure $K\lambda \otimes \mu_K$ on $\R^2\times G$, so that the sets $\mathcal{P}_K^G$ and $\tilde{\phi}(\mathcal{P}_K^G)$ have the same distribution. Since the distribution of $\omega(\tilde{\mathfrak{b}}, \phi(\mathcal{P}^G_K))$ does not depend on the $\sigma(\mathcal{P}_K)$-measurable
 proper basis $\tilde{\mathfrak{b}}$ but only on $\phi(\mathcal{P}^G_K)$, and since $\phi(\mathfrak{b})$ is a $\sigma(\mathcal{P}_K)$-measurable
 proper basis,
 \[\omega(\phi(\mathfrak{b}), \tilde{\phi}(\mathcal{P}^G_K))\overset{(d)}= \omega( \tilde{\phi}(\mathcal{P}^G_K))\overset{(d)}=\omega( \mathcal{P}^G_K)\overset{(d)}=\omega(\mathfrak{b},\mathcal{P}^G_K )\]
(the two middle objects being only defined in distribution).
%
%

We conclude by noticing that  \[(\omega(\mathfrak{b}, \mathcal{P}_K^G )[\phi\circ \ell])_{\ell}\overset{(d)}=  ( \omega(\phi(\mathfrak{b}), \tilde{\phi}(\mathcal{P}_K^G) )[\phi\circ \ell])_{\ell}=(\omega(\mathfrak{b},\mathcal{P}_K^G)[\ell])_{\ell}.\]
\end{proof}

\section{A much simpler problem}
\label{sec:simpler}
%

Let us recall that our goal is to study $\omega_K([\bar{X}])$, which can be written as a very large product of random elements in $G$,
\[  \omega_K([\bar{X}])=g_{i_1}\dots g_{i_k}.
\]
If the group $G$ was commutative, we could arrange this product as
\begin{equation} g_1^{\theta(x_1,\bar{X})}\dots g_n^{\theta(x_n,\bar{X})},\label{eq:word} \end{equation}
where $(x_i,g_i)$ is an enumeration of our random set $\mathcal{P}^G_K$ and $\theta(x,\bar{X})$ is the integer winding number of $\bar{X}$ around $x$. Our goal, as we explained sooner, is actually to reduce our problem to the study of this simpler expression \eqref{eq:word}, even when $G$ is not commutative. In this section, we show that the expression given by \eqref{eq:word} does converge in distribution, as $K\to \infty$, and we identify the limit.
\begin{definition}
We say that a probability measure $\nu$ on $\mathfrak{g}$ lies in the \emph{strong attraction domain} of  $\nu^\sigma$, if $\nu$ is $\ad$-invariant and there exists $\delta>0$ and a coupling  $(X,Y): X\sim\nu,\ Y\sim\nu^\sigma$ such that $\mathbb{E}[|X-Y|^{1+\delta}]<+\infty$ and $\mathbb{E}[X-Y]=0$.
We then write $\nu\in \mathcal{D}_\delta(\nu^\sigma)$, and we write $[\nu,\nu^{\sigma}]$ for the law of such a coupling $(X,Y)$.
\end{definition}

\begin{proposition}
  \label{prop:orderedCauchy}
  Let $G$ be a compact Lie group with Lie algebra $\mathfrak{g}$ endowed with a biinvariant scalar product. Let $\nu\in \mathcal{D}_\delta(\nu^\sigma)$, for some $\sigma>0$.

  Let $(X_i)$ be an i.i.d. sequence of random variables distributed according to $\nu$. 
  Then, the random variable \[\exp(\tfrac{X_1}{n})\dots \exp(\tfrac{X_n}{n})\]
  converges in distribution, as $n\to +\infty$. The limit distribution only depends on $\sigma$.
\end{proposition}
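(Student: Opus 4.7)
My plan is to recognize the product $\exp(X_1/n)\cdots\exp(X_n/n)$ as the Cartan development at time $1$ of a piecewise-linear $\mathfrak{g}$-valued path $Q^n$ with increments $X_i/n$ on the intervals $[(i-1)/n,i/n]$, and to identify the limit through the $1$-stable Lévy curve $Y$ introduced after Theorem~\ref{th:main}. Since $\nu\in\mathcal{D}_\delta(\nu^\sigma)$, I would first set up a probability space carrying iid pairs $(X_i,Y_i)$ drawn from the coupling $[\nu,\nu^\sigma]$, so that $X_i\sim\nu$, $Y_i\sim\nu^\sigma$, $\mathbb{E}[X_i-Y_i]=0$ and $\mathbb{E}\|X_i-Y_i\|^{1+\delta}<\infty$. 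Writing $P^n$ for the piecewise-linear path with increments $Y_i/n$, the proof will split into two steps: (a) show that $\prod_i\exp(Y_i/n)$ converges in distribution to $y(1)$, the Cartan development at time $1$ of $Y$; (b) show that $\prod_i\exp(X_i/n)$ and $\prod_i\exp(Y_i/n)$ have the same limit.

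For (a), let $\tilde Y$ be the symmetric $\mathfrak{g}$-valued $1$-stable Lévy process with $\tilde Y(1)\sim\nu^\sigma$. The $1$-stable scaling $n\tilde Y(1/n)\stackrel{(d)}{=}\tilde Y(1)$ allows me to realise $Y_j=n(\tilde Y(j/n)-\tilde Y((j-1)/n))$. With this choice, $\prod_i\exp(Y_i/n)$ is exactly the Cartan development at time $1$ of the polygonal path $P^n$ interpolating $\tilde Y$ at the grid points $j/n$. Selecting the parametrization of the filled-in curve $Y$ so that it has finite $p$-variation for some $p\in(1,2)$ and its jump-fillings align with the diagonal pieces of $P^n$, I would show $P^n\to Y$ in $p$-variation in probability. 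Since the Itô--Lyons map of the linear Young RDE $dg=g\,dY$ is continuous in $p$-variation for $p<2$, the Cartan development at time $1$ of $P^n$ then converges to $y(1)$.

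For (b), $Q^n-P^n$ is piecewise linear with increments $(X_i-Y_i)/n$, and the definition of $p$-variation on a partition refining the grid $\{j/n\}$ yields
\[
\|Q^n-P^n\|_{p\text{-var}}^p\le\sum_{i=1}^n\frac{\|X_i-Y_i\|^p}{n^p}=\frac{1}{n^{p-1}}\cdot\frac{1}{n}\sum_{i=1}^n\|X_i-Y_i\|^p.
\]
Choosing $p=1+\delta$, the strong law of large numbers applied to the iid sequence $\|X_i-Y_i\|^{1+\delta}\in L^1$ forces this quantity to tend to $0$ almost surely. Combined with (a) and the triangle inequality for $p$-variation, this yields $Q^n\to Y$ in $p$-variation in probability, and continuity of the Cartan development gives $\prod_i\exp(X_i/n)\to y(1)$ in distribution; since $y(1)$ depends only on $\nu^\sigma$, hence only on $\sigma$, the claim follows.

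The main obstacle will be step (a), namely the $p$-variation approximation $P^n\to Y$. The process $\tilde Y$ has countably many jumps of widely varying sizes: a large jump occurring at a time $t\notin\{j/n\}$ is turned by $P^n$ into a diagonal crossing of an interval of length $1/n$, whereas in $Y$ the same jump has been replaced by a straight segment whose parametrization is at our disposal. Matching the two will require an $n$-dependent time-change of $Y$ which must remain uniform in the contribution of the infinitely many small jumps. The summability $\sum_{t\le 1}\|\Delta\tilde Y(t)\|^p<\infty$ for $p>1$, which is exactly what makes $\tilde Y$ a path of finite $p$-variation, should provide the probabilistic input that makes this alignment work.
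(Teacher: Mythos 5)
Your overall architecture (couple $(X_i,Y_i)$ via $[\nu,\nu^\sigma]$, view the products as Cartan developments of polygonal paths, conclude by continuity of the Young development in $p$-variation for $p<2$) is the same as the paper's, but step (b) rests on a false inequality. For $p>1$ the $p$-variation of the piecewise-linear difference path is \emph{not} bounded by $\sum_i\|X_i-Y_i\|^p/n^p$: the supremum defining $p$-variation also runs over coarse partitions, and merging aligned increments increases the sum (two equal increments $v/n$ give $\|2v/n\|^p=2^p\|v/n\|^p>2\|v/n\|^p$). In the extreme case $X_i-Y_i\equiv v$ deterministic, your bound would force $\|Q^n-P^n\|_{p\text{-var}}\to 0$, yet $Q^n-P^n$ is the straight path $t\mapsto tv$, whose $p$-variation is $\|v\|$ for every $n$, and the developed limit would be shifted by a drift. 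This also explains why your step (b) never invokes the hypothesis $\mathbb{E}[X-Y]=0$, which is in fact essential. The paper's proof of this step goes through the interpolation inequality $\|f\|_{p\text{-var}}\le\|f\|_{1\text{-var}}^{1/p}\|f\|_0^{1-1/p}$, then H\"older, Doob's maximal inequality and a Rosenthal-type inequality for moments of order $q\in(1,1+\delta)$: the $1$-variation factor stays of order $\mathbb{E}|X_1-Y_1|$ while the sup-norm factor is small \emph{because} the coupled differences are centered with $1+\delta$ moments. You need an argument of this kind (some use of centering plus a maximal inequality), not a termwise $\ell^p$ bound.

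Concerning step (a), you correctly flag the jump-alignment issue but leave it unresolved; the paper settles it without any $n$-dependent time change. Since the $1$-stable path has finite $q$-variation for all $q>1$ (Blumenthal--Getoor), it lies in the $p$-variation closure of smooth paths \cite[Corollary 5.35]{frizVictoir}, and Wiener's characterization \cite[Theorem 5.33]{frizVictoir} then yields directly that the piecewise-linear interpolations at the grid points $\{j/n\}$ converge in $p$-variation to the (jump-filled) limit curve. Replacing your time-change plan by this closure argument, and your step (b) by the interpolation/martingale estimate, turns the proposal into essentially the paper's proof.
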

Let us give a detailed version of this proposition, with intermediary results that will guide us through the proof.
For a $\mathcal{C}^1$ curve $\Gamma:[0,1]\to T_1G\simeq \mathfrak{g}$ starting at $0$, consider the ordinary differential equation
\[
\gamma(0)=1 \qquad \frac{\d }{\d t}(\gamma(t_0)^{-1} \gamma(t) )_{|t=t_0}=\Gamma'(t).
\]
We know from Young's integration theory that the solution map $I:\Gamma\mapsto \gamma$ extends continuously in the $p$-variation norms for all $p<2$. With still write $I$ for the extended map.

\begin{proposition*}
  Let $G$ be a compact Lie group with Lie algebra $\mathfrak{g}$ endowed with a biinvariant scalar product. Let $\nu\in \mathcal{D}_\delta(\nu^\sigma)$, for some $\sigma>0$.

  Set $(X_i,X^{\sigma}_i)$ an i.i.d. sequence distributed according to $[\nu,\nu^\sigma]$. For $n\in \mathbb{N}\setminus\{0\}$, set $\Gamma^{(n)}$ and $\Gamma^{(n,\sigma)}$ the continuous functions from $[0,1]$ to $\mathfrak{g}$ given by
  \begin{align*}
  \Gamma^{(n)}(t)&=\frac{1}{n} \sum_{i=1}^{\lfloor t n\rfloor} X_i+(tn-\lfloor tn\rfloor) X_{\lfloor tn\rfloor+1},\\
  \Gamma^{(n,\sigma)}(t)&=\frac{1}{n} \sum_{i=1}^{\lfloor t n\rfloor} X^{\sigma}_i+(tn-\lfloor tn\rfloor) X^{\sigma}_{\lfloor tn\rfloor+1}.
  \end{align*}
  \begin{itemize}
  \item  For all $p>1$, $\Gamma^{(n,\sigma)}$ converges weakly in $p$-variation. Let $\Gamma^{(\sigma)}$ be distributed as the limit.
  \item For all $p\in (1,1+\delta)$, $ \|d_{p-var}(\Gamma^{(n)}, \Gamma^{(n,\sigma)} )\|_{L^1(\Omega)} \underset{n\to\infty}\to 0$. In particular, $\Gamma^{(n)}$ converges weakly in $p$-variation toward
  $\Gamma^{(\sigma)}$.
  \item 
  For all $n$ and all
  $i\in \{0,\dots,n \}$,
  $I(\Gamma^{(n)})(\tfrac{i}{n})= \exp_G(\tfrac{X_1}{n} )\dots \exp_G(\tfrac{X_i}{n})$.
  \item 
  As $n\to \infty$, the random variable $\exp_G(\tfrac{X_1}{n} )\dots \exp_G(\tfrac{X_i}{n})$ converges in distribution, and the limit law is the one of $I(\Gamma^{(\sigma)})(1)$.
  \end{itemize}
\end{proposition*}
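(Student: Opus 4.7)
The plan is to take the four bullets in the natural order, with the third being essentially definitional, the first two carrying the probabilistic content, and the fourth following at once from Young's continuous dependence on the driver.

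For the third bullet, on each interval $[\tfrac{i}{n}, \tfrac{i+1}{n}]$ the path $\Gamma^{(n)}$ is affine with constant derivative $X_{i+1}$, so the defining ODE of $I$ admits the explicit solution $\gamma(t) = \gamma(\tfrac{i}{n}) \exp_G((t - \tfrac{i}{n}) X_{i+1})$ on that interval. An immediate induction on $i$ then yields $I(\Gamma^{(n)})(\tfrac{i}{n}) = \exp_G(\tfrac{X_1}{n}) \cdots \exp_G(\tfrac{X_i}{n})$.

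For the first bullet, I would invoke the functional stable limit theorem. The partial-sum process $t \mapsto \tfrac{1}{n} \sum_{i \leq nt} X_i^\sigma$ converges in Skorokhod's $J_1$ topology to a symmetric $1$-stable L\'evy process $\tilde Y$ with marginal $\nu^\sigma$ at time $1$, the symmetry of $\nu^\sigma$ being what cancels the usual logarithmic correction at $\alpha = 1$. The piecewise linear path $\Gamma^{(n,\sigma)}$ amounts to replacing each jump of the rescaled walk by a straight segment in $\mathfrak g$, and the limit $\Gamma^{(\sigma)}$ is the continuous reparametrized process $Y$ described in the paper, which, by the Blumenthal--Getoor-type bound already cited, has finite $p$-variation for every $p > 1$. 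The upgrade from $J_1$ convergence to weak convergence in $p$-variation is obtained by a standard truncation at a level separating the few large jumps from the continuous small-jump contribution, and a tightness estimate for the $p$-variation norm of the small part. This is where I expect the main technical obstacle to sit: controlling the $p$-variation of the interpolations uniformly in $n$ requires careful handling of the few extreme increments that dominate the heavy-tailed sums, and the parametrization subtlety in the definition of $Y$ must be reconciled with the uniform parametrization of $\Gamma^{(n,\sigma)}$.

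For the second bullet, I would exploit the coupling directly: the difference $D^{(n)} := \Gamma^{(n)} - \Gamma^{(n,\sigma)}$ is a piecewise linear path whose increments $\tfrac{1}{n}(X_i - X_i^\sigma)$ are i.i.d., centered, and have finite $(1+\delta)$-th moment. Its $1$-variation is $\tfrac{1}{n} \sum_{i=1}^n \|X_i - X_i^\sigma\|$, bounded in $L^1$ uniformly in $n$ by the strong law of large numbers, while the centering permits a von Bahr--Esseen plus Doob estimate yielding $\bigl\| \sup_t \|D^{(n)}(t)\| \bigr\|_{L^{1+\delta}} = O(n^{-\delta/(1+\delta)})$. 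Interpolating via the elementary inequality $\|D\|_{p\text{-var}} \leq \|D\|_{1\text{-var}}^{1/p} \, (2\|D\|_\infty)^{1 - 1/p}$ (valid for $p \geq 1$) and taking $p \in (1, 1 + \delta)$ then forces $\|D^{(n)}\|_{p\text{-var}} \to 0$ in $L^1$, which, combined with the first bullet via Slutsky, gives the weak convergence of $\Gamma^{(n)}$ to $\Gamma^{(\sigma)}$ in $p$-variation. For the fourth bullet, it suffices to recall that the Young solution map $I$ is continuous in $p$-variation for every $p < 2$; applying the continuous mapping theorem, evaluating at time $1$, and invoking the third bullet identifies the limiting law of $\exp_G(\tfrac{X_1}{n}) \cdots \exp_G(\tfrac{X_n}{n})$ as that of $I(\Gamma^{(\sigma)})(1)$, which depends only on $\sigma$.
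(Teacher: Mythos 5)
Your third, second and fourth bullets are handled essentially as in the paper: the third is the explicit piecewise solution of the ODE, the second uses exactly the same ingredients (the interpolation bound $\|D\|_{p\text{-var}}\leq \|D\|_{1\text{-var}}^{1/p}\|D\|_{0}^{1-1/p}$, H\"older, Doob's maximal inequality and a von Bahr--Esseen/Rosenthal bound for the centered increments $\tfrac1n(X_i-X_i^\sigma)$), and the fourth is Young continuity of $I$ plus the continuous mapping theorem. On these points your argument is correct and matches the paper.

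The genuine gap is the first bullet, which you reduce to a functional stable limit theorem in the Skorokhod $J_1$ topology followed by an ``upgrade'' to weak convergence in $p$-variation via truncation of large jumps and a tightness estimate for the $p$-variation of the small-jump part --- and you then explicitly flag this upgrade as the main technical obstacle without carrying it out. As written, the central convergence is therefore not proven: $J_1$ convergence does not imply convergence in $p$-variation, and the required uniform-in-$n$ $p$-variation tightness for heavy-tailed interpolated walks is precisely the hard step you leave open (it can be done, but it is a substantial piece of work in its own right). The paper avoids this entirely by exploiting that $\nu^\sigma$ is \emph{exactly} symmetric $1$-stable: the rescaled partial sums $\big(\tfrac1n\sum_{i\leq j}X_i^\sigma\big)_{0\leq j\leq n}$ have exactly the law of $\big(\Gamma^\sigma(\tfrac jn)\big)_{0\leq j\leq n}$ for a single stable process $\Gamma^\sigma$, so one may couple all the $\Gamma^{(n,\sigma)}$ to be the mesh-$\tfrac1n$ piecewise-linear interpolations of one fixed path; since that path has finite $q$-variation for every $q>1$, Wiener's characterization of the $p$-variation closure of smooth paths (Friz--Victoir) gives $d_{p\text{-var}}(\Gamma^{(n,\sigma)},\Gamma^\sigma)\to 0$ in this coupling, hence weak convergence, with no limit theorem, no truncation and no tightness argument. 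You should replace your sketched stable-limit-theorem route by this exact-stability coupling (it also settles, inside the coupling, the parametrization issue you mention), or else actually supply the $p$-variation tightness argument you defer.
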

\begin{proof}
   Let $\Gamma^{\sigma}$ be a symmetric $1$-stable process such that $\Gamma^{\sigma}(1)$ is distributed as $\nu^\sigma$.


  Then, for any $n\in \mathbb{N}\setminus\{0\}$, the family $( \frac{1}{n} \sum_{i=1}^{j} X^{\sigma}_i )_{j\in\{0,\dots, n\} }$ is distributed as $(\Gamma^{\sigma}(\tfrac{j}{n}))_{j\in\{0,\dots, n\} }$.
  The piecewise linear curve with interpolation points the $(\Gamma^{\sigma}(\tfrac{j}{n}))_{j\in\{0,\dots, n\} }$ is therefore distributed as $\Gamma^{(n,\sigma)}$. Thus, in some probability space, we can assume that each curve $\Gamma^{(n,\sigma)}$ is exactly the piecewise linear approximation of $\Gamma^{\sigma}$ with the interpolation times $\{0,\tfrac{1}{n},\dots , 1\}$.

  Since $\Gamma^{\sigma}$ has finite $q$-variation for all $q>1$ \cite[Theorem 4.1]{Blumenthal},
  it lies on the $p$-variation completion of $\mathcal{C}^\infty$ for all $p>1$ \cite[Corollary 5.35]{frizVictoir}. By the Wiener's characterization  \cite[Theorem 5.33, $\it{(i.1)\implies (i.3)}$]{frizVictoir}),
  for all $q>1$, $d_{q-var}( \Gamma^{(n,\sigma)}, \Gamma^{\sigma})\underset{n\to \infty}\longrightarrow 0$. This proves the first item, and shows besides that $\Gamma^{(\sigma)}\overset{(d)}=\Gamma^{\sigma}$ is a stable process associated with $\nu^\sigma$.

  For the second item, we use the following interpolation inequality \cite[Proposition 5.5]{frizVictoir}: for all $p\in (1,+\infty)$, for all $f$,
  \begin{equation}
  \label{eq:interpolation}
   \|f\|_{p-var}\leq \|f\|_{1-var}^{\frac{1}{p}}\|f\|_0^{1-\frac{1}{p}}
   \end{equation}
  where $\|f\|_0=\sup_{s,t\in [0,1]} |f(s)-f(t)|$.

  We also use the following inequality \cite[Lemma 2]{Rosenthal}\footnote{Despite the title of the cited article, the given inequality \emph{does} concern the case $p<2$.}. 

  \begin{lemma}
  Let $X_1,\dots, X_n$ be a family of independant and centered $\mathbb{R}$-valued random variables with moment of order $p<2$. Then,
  \begin{equation}
  \label{eq:rosenthal}
   \mathbb{E}\big[\big|\sum_{i=1}^n X_i\big|^p \big]^{\frac{1}{p}}\leq 2 \big(\sum_{i=1}^n \mathbb{E}[|X_i|^p ]\big)^{\frac{1}{p}}.
   \end{equation}
  \end{lemma}
  We then obtain, for $q\in(1,1+\delta)$ and $p>1$:
  \begin{align*}
  \mathbb{E}[ \|\Gamma^{n} -\Gamma^{(n,\sigma)}\|_{p-var} ]
  &\leq \mathbb{E}[ \|\Gamma^{n} -\Gamma^{(n,\sigma)}\|_{1-var}^{\frac{1}{p}} \|\Gamma^{n} -\Gamma^{(n,\sigma)}\|_{0}^{1-\frac{1}{p}} ] \qquad \mbox{ (using \eqref{eq:interpolation})}\\
  &\leq \mathbb{E}[ \|\Gamma^{n} -\Gamma^{(n,\sigma)}\|_{1-var}]^{\frac{1}{p}} \mathbb{E}[\|\Gamma^{n} -\Gamma^{(n,\sigma)}\|_{0}]^{1-\frac{1}{p}} \qquad \mbox{(H\"older's inequality)} \\
  &\leq \big(\sum_{i=1}^n \tfrac{1}{n}\mathbb{E}[|X_i-Y_i|] \big)^{\frac{1}{p}} \big(2  \mathbb{E}[\|\Gamma^{n} -\Gamma^{(n,\sigma)}\|_{\infty}] \big)^{1-\frac{1}{p}} \qquad \mbox{($\|f\|_0\leq 2\|f\|_\infty $)}\\
  &= \mathbb{E}[|X_1-Y_1|]^{\frac{1}{p}}  2^{1-\frac{1}{p}} \mathbb{E}[ \max_{i\in \{0,\dots, n\}} \big| \sum_{j=1}^i \tfrac{1}{n} (X_j-Y_j) \big| ]^{1-\frac{1}{p}}\\
  &\leq C_p
  \big( \tfrac{q}{q-1} \mathbb{E}\big[\big|\sum_{j=1}^n \tfrac{1}{n}(X_j-Y_j)\big|^q \big]^{\frac{1}{q}} \big)^{ 1-\frac{1}{p}}
  \ \mbox{(Doob's maximal inequality)}\\
  &\leq C_{p,q}
  \Big(2  \big(\sum_{j=1}^n \mathbb{E}\big[ \tfrac{1}{n^q}|X_j-Y_j|^q \big]\big)^{\frac{1}{q}}  \Big)^{ 1-\frac{1}{p}} \ \mbox{(using \eqref{eq:rosenthal})}\\
  &\leq  C'_{p,q} n^{-\frac{(p-1)(q-1)}{pq}  } \underset{n\to +\infty}\longrightarrow 0.
  \end{align*}
  This proves the second item.

  The third one is a very basic computation.  

  The last one follows from the continuity of $I$ in $p$-variation for $p\in(1,2)$.
\end{proof}
Before we explain how we will use Proposition \ref{prop:orderedCauchy}, we need the following definition
\begin{definition}
For a point $x$ outside the range of $\bar{X}$, the planar Brownian motion concatenated with a straight segment, we define $\theta(x)\in \mathbb{Z}$ the winding number of $\bar{X}$ around the point $x$.
\end{definition}
\begin{theorem}
\label{th:CauchyDomTheta}
  Let $\mathbb{P}^X$ be the law of the Brownian motion (from $[0,1]$ to $\R^2$).
  For $R>0$, let $x$ be a point distributed uniformly on $B(0,R)$. Then, $\mathbb{P}^X$-almost surely, the random variable $\theta(x)$ lies on the strong attraction domain of a Cauchy distribution with scale parameter $\frac{1}{2}$.
\end{theorem}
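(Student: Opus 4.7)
The plan is to combine a precise tail asymptotic for the level-set measures of the winding function with a coupling argument, then adjust for centering.

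The first task is to establish, $\mathbb{P}^X$-almost surely, a tail estimate of the form
\[
\mathrm{leb}\bigl\{x \in B(0,R) : |\theta(x)| \geq k\bigr\} = \frac{R^2}{k}\bigl(1+O(|k|^{-\eta})\bigr) \quad \text{as } |k|\to\infty,
\]
for some $\eta>0$, whose leading term matches exactly the tail of a Cauchy distribution with scale $\tfrac12$ once divided by $\pi R^2$. I would derive it from the Spitzer--Pitman--Yor description of the Brownian winding around a fixed point (approximately Cauchy of scale $\tfrac12\log(1/r^2)$ when $r$ is the distance from that point to the origin), integrated over $x$ uniform in $B(0,R)$, together with the level-set analysis of Werner \cite{Werner,Werner2} and the averaged-winding computation of \cite{LAWA}.

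Given this tail estimate, I would couple $\theta(x)$ with a true Cauchy variable $Y \sim \nu^{1/2}$ via the randomized quantile transport: taking an auxiliary uniform $U$ independent of everything, set $Y = F_{\nu^{1/2}}^{-1}(V)$, where $V$ is conditionally uniform on the interval $\bigl[F_{\theta|X}(\theta(x)-1),F_{\theta|X}(\theta(x))\bigr]$. This makes $Y \sim \nu^{1/2}$, keeps $|\theta(x)-Y|\leq 1$ in the bulk, and in the tails the discrepancy is controlled by the CDF gap, which by the previous step is $O(|k|^{-1-\eta})$. It follows that $\mathbb{E}\bigl[|\theta(x)-Y|^{1+\delta}\bigr]<\infty$ for any $\delta<\eta$.

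The last condition $\mathbb{E}[\theta(x)-Y]=0$ will not hold for the natural quantile coupling: a direct computation shows the coupling-mean equals, $\mathbb{P}^X$-almost surely, the renormalized Lévy area of $\bar X$ divided by $\pi R^2$, which is finite by the tail estimate but generically nonzero. To enforce zero mean I would modify the coupling on a bounded window $\{|Y|\leq N\}$, permuting the transport within this window so as to shift the coupling-mean of $Y$ by the required amount. Since the modification is supported on a bounded set it neither alters the two marginals nor damages the integrability of $|\theta(x)-Y|^{1+\delta}$.

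The most delicate point is obtaining the polynomial error $O(|k|^{-\eta})$ in the tail asymptotic, as Spitzer's theorem provides only distributional convergence. I expect one has to exploit the explicit Pitman--Yor joint distribution of winding and Bessel clock, or a careful skew-product/conformal decomposition of the Brownian motion near a point of large winding, to upgrade the classical asymptotic into a quantitative rate. Once this analytic input is in hand, the coupling and centering manipulations above should go through without further obstacle.
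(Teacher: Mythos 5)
Your centering step cannot work as described, and this is the genuine gap. For any coupling $(X,Y)$ whose difference is integrable, the quantity $\mathbb{E}[X-Y]$ is not a property of the coupling but of the two marginals alone: since the truncation $t\mapsto\max(\min(t,K),-K)$ is $1$-Lipschitz, one has $|(X)_K-(Y)_K|\le|X-Y|$, so dominated convergence gives
\[
\mathbb{E}[X-Y]=\lim_{K\to\infty}\bigl(\mathbb{E}[(X)_K]-\mathbb{E}[(Y)_K]\bigr),
\]
and each truncated mean depends only on the law of $X$, respectively of $Y$. Consequently, \emph{any} modification of the transport plan that preserves the two marginals --- in particular one supported on a bounded window $\{|Y|\le N\}$ --- leaves $\mathbb{E}[X-Y]$ unchanged, so there is no way to ``shift the coupling-mean by the required amount''. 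As you yourself compute, this invariant equals ($\mathbb{P}^X$-a.s.) the Lévy-area term, which is almost surely nonzero; hence no coupling of $\theta(x)$ with the \emph{symmetric} law $\nu^{1/2}$ can satisfy both $\mathbb{E}[|X-Y|^{1+\delta}]<\infty$ and $\mathbb{E}[X-Y]=0$. The repair is not a cleverer coupling but a recentred target: the theorem only asserts membership in the strong attraction domain of \emph{a} Cauchy law of scale $\tfrac12$, so take the Cauchy centred at $\lim_K\mathbb{E}[(\theta(x))_K]$ (the Lévy-area term). The mean-zero condition then holds automatically for any coupling with integrable difference, and the centring is harmless downstream because in Lemma \ref{le:rot} the variable is multiplied by the independent symmetric direction $U$, which kills the centre.

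Two further remarks. First, the paper does not reprove this statement at all: its proof is a one-line citation of Theorem 1.1 of \cite{LAWA}, and the almost-sure quantitative tail asymptotic with polynomial error that you correctly identify as the hard analytic input is precisely the content of that reference; so your first step is a re-derivation of the cited input rather than an alternative argument, and it is by far the bulk of the work, not something that follows quickly from Spitzer--Pitman--Yor. Second, check your constant: by Werner's theorem (as used in Lemma \ref{le:card}), $\leb\{x:|\theta(x)|\ge k\}\sim\frac{1}{\pi k}$, not $\frac{R^2}{k}$; after dividing by the area $\pi R^2$ this matches a Cauchy of scale $\frac{1}{2\pi R^2}$, so the advertised scale $\tfrac12$ is tied to the area normalization inherited from \cite{LAWA} and must be tracked carefully when you set up the quantile coupling, otherwise the $(1+\delta)$-moment bound for the difference fails for a trivial reason (mismatched tail constants already make $|X-Y|$ of order $|X|$ in the tails).
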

\begin{proof}
This is a direct corollary of Theorem 1.1 in our previous paper \cite{LAWA}.
\end{proof}
\begin{lemma}
\label{le:rot}
  Let $R$ be a random variable on the strong attraction domain $\mathcal{D}_\delta(C)$ of a (real-valued) Cauchy distribution $C$ with scale parameter $s$.
  Let $U$ be a random variable independent from $X$, and distributed uniformly on the unit sphere of $\mathfrak{g}$. Then, $RU$ lies on the attraction domain
  $\mathcal{D}_\delta(\nu^\sigma)$ of the symmetric $1$-stable distribution $\nu^\sigma$ with $\sigma= \frac{ \Gamma\big( \frac{d}{2}\big)}{\sqrt{\pi} \Gamma \big(\frac{d+1}{2} \big)}s$.
\end{lemma}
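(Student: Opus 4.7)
My plan is to prove the lemma in two stages: first reduce to the case where $R$ is genuinely Cauchy-distributed via the hypothesis coupling, then couple the resulting $R'U$ with a $\nu^\sigma$-variable through a quantile coupling on the radial parts.

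The $\Ad$-invariance of $RU$ is immediate: bi-invariance of the scalar product makes each $\Ad(g)$ a linear isometry of $\mathfrak g$, so the uniform measure on the unit sphere is preserved and $\Ad(g)U\overset{(d)}=U$; as $R$ is a scalar independent of $U$, it follows that $\Ad(g)(RU)\overset{(d)}=RU$. For the reduction step, take the coupling $(R,R')$ provided by the hypothesis (so $R'\sim C$, $\mathbb E[|R-R'|^{1+\delta}]<\infty$, $\mathbb E[R-R']=0$) and take $U$ independent of it. Then $|RU-R'U|=|R-R'|$ and $\mathbb E[RU-R'U]=\mathbb E[R-R']\,\mathbb E[U]=0$, so it suffices to exhibit a further coupling between $R'U$ and some $Y\sim\nu^\sigma$ with the same two properties: Minkowski's inequality will then bound $\|RU-Y\|_{L^{1+\delta}}$ and the expectations add to zero.

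For the second stage, both $R'U$ and $Y$ are rotationally symmetric in $\mathfrak g\simeq \R^d$. Writing $R'U=|R'|\,V'$ with $V'=\sgn(R')U$ (which is uniform on $S^{d-1}$ and independent of $|R'|$), I set
\[
Y:=F_{\|Y\|}^{-1}\!\bigl(F_{|R'|}(|R'|)\bigr)\,V',
\]
which has the right marginal $\nu^\sigma$ by the probability integral transform. Then $R'U-Y=(|R'|-\|Y\|)V'$, so $\|R'U-Y\|=\bigl||R'|-\|Y\|\bigr|$, and since $V'$ is centered and independent of $|R'|$ (hence of $\|Y\|$), $\mathbb E[R'U-Y]=\mathbb E[|R'|-\|Y\|]\,\mathbb E[V']=0$ (the first expectation is finite by the tail analysis below, so it is well-defined).

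The heart of the argument is then the tail comparison
\[
\bar F_{|R'|}(r)=\frac{2s}{\pi r}+O(r^{-3}),\qquad \bar F_{\|Y\|}(r)=\frac{2\,\Gamma(\tfrac{d+1}{2})\,\sigma}{\sqrt\pi\,\Gamma(\tfrac{d}{2})\,r}+O(r^{-3}),
\]
the second following from the substitution $u=r/\sigma$ in the density $f_{\|Y\|}$ together with the elementary asymptotic $\int_x^\infty u^{d-1}(1+u^2)^{-(d+1)/2}\,du=1/x+O(x^{-3})$. Matching the leading coefficients forces exactly the value of $\sigma$ announced in the lemma; both next-order corrections are then $O(r^{-3})$, and a standard mean-value argument using $f_{\|Y\|}(r)=\Theta(r^{-2})$ yields $\bigl||R'|-\|Y\|\bigr|=O(1/|R'|)$ on $\{|R'|\geq 1\}$, with the difference bounded otherwise. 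Hence $\bigl||R'|-\|Y\|\bigr|$ has finite moments of all orders, far beyond the required $1+\delta$. The only delicate point is the tail-matching calculation: getting the constants exactly right so that the announced formula for $\sigma$ falls out. The remaining ingredients — rotational symmetry, quantile coupling on the radius, and Minkowski concatenation of the two couplings — are standard.
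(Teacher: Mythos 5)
Your proposal is correct and is essentially the paper's own argument: the radial quantile coupling $Y=F_{\|Y\|}^{-1}(F_{|R'|}(|R'|))\,\sgn(R')U$ is exactly the paper's map $Z_\phi=\sgn(S)\phi(|S|)U$ with $\phi=\psi^{-1}$ defined by equating the two tail integrals, and the tail expansion matching the leading $c/r$ coefficients (forcing the stated $\sigma$) with $O(r^{-3})$ corrections is the same computation that gives the paper's bound $\psi(x)=x+O(x^{-1})$. Your explicit checks of $\Ad$-invariance and of the centering $\mathbb{E}[RU-Y]=0$ (via independence of the direction) are points the paper leaves implicit, but they do not change the route.
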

\begin{proof}
  One can and will assume that $s=1$. The general cases is then recovered from a scaling argument.

  Let $S$ be a Cauchy random variable which is such that $\mathbb{E}[|R-S|^{1+\delta}]<+\infty$. Since $\mathbb{E}[\|RU-SU\|^{1+\delta}]=\mathbb{E}[|R-S|^{1+\delta}]<+\infty$, it suffices to show that there exists a random variable $Z$ distributed as $\nu^\sigma $ with $\mathbb{E}[\|SU-Z\|^{1+\delta}]<+\infty$. Actually, we will show that there exists $Z$
  distributed as $\nu^\sigma$ and such that $SU-Z$ is bounded.
%

  For all $x\in [0,+\infty)$, let $\psi(x)$ be the unique positive real number solution of the equation
  \begin{equation}
  \label{eq:phi}
\frac{2}{\pi}  \int_{\psi(x)}^{+\infty} \frac{\d z}{1+z^2}=\frac{2 \Gamma\big(\tfrac{d+1}{2}\big)  }{\sqrt{\pi} \Gamma\big(\tfrac{d}{2}\big)}
  \int_{\sigma^{-1} x}^{+\infty} \frac{z^{d-1} \d z}{(1+z^2)^{\frac{d+1}{2} }}.\end{equation}
  The constants are tuned so that both sides are equal to $1$ for $x=0=\psi(0)$.
  It is then easily seen that $\psi$ defines a continuous increasing bijection of $[0,+\infty)$. Set $\phi=\psi^{-1}$. We set $Z_\phi=\sgn(S)\phi(|S|)U$. We also set $Z_\sigma$ a random variable distributed as $\nu^\sigma$.

Remark that $\|Z_\sigma\|$ and $\frac{Z_\sigma}{\|Z_\sigma\|})$ are independent variables, and that the latter is distributed uniformly over the unit sphere. The same is true for $Z_\sigma$ replaced with $Z_\phi$, and to show that $Z_\sigma\overset{(d)}= Z_\phi$ thus reduces to show that  $\|Z_\sigma\|\overset{(d)}= \|Z_\phi\|$, which follows from a simple computation. Indeed,
\[
\mathbb{P}( \|Z_{\phi}U\|\geq r )= 2 \mathbb{P}( Z_{\phi} \geq r  )
= 2\mathbb{P}(S\geq \psi(r))
= \frac{2}{\pi}  \int_{\psi(r)}^{+\infty } \frac{\d z}{1+z^2}
=2\frac{\Gamma\big(\tfrac{d+1}{2}\big)  }{\sqrt{\pi} \Gamma\big(\tfrac{d}{2}\big)}
  \int_{\sigma^{-1}r}^{+\infty} \frac{z^{d-1} \d z}{(1+z^2)^{\frac{d+1}{2} }}.
\]
On the other hand,
\begin{align*}
\mathbb{P}( \|Z_{\sigma} \|\geq r )&= \int_{r}^{+\infty} |S^{d-1}| \frac{z^{d-1} \Gamma\big(\tfrac{d+1}{2}\big) \d z}{\pi^{\frac{d+1}{2}}\sigma^d (1+\sigma^{-2} z^2)^{\tfrac{d+1}{2} }  }
=\frac{2\pi^{\frac{d}{2}}\Gamma\big(\tfrac{d+1}{2}\big) }{\pi^{\frac{d+1}{2}} \Gamma\big(\tfrac{d}{2}\big)}
\int_{\sigma^{-1}r}^{+\infty} \frac{u^{d-1}\d u}{(1+u^2)^{\frac{d+1}{2}}  }\\
&=\mathbb{P}( \|Z_{\phi}U\|\geq r ).
\end{align*}
It follows that $Z_\phi$ is indeed distributed as $Z_\sigma$.

Let us now tune the parameter $\sigma$ so that $Z_\phi=\sgn(S)\phi(|S|)$ and $S$ are close to each other.

  Since \[\int_{\psi(x)}^{+\infty} \frac{\d z}{1+z^2}\sim \frac{1}{\psi(x)}
  \qquad \mbox{ and }\qquad
  \int_{\sigma^{-1}x}^{+\infty} \frac{z^{d-1} \d z}{(1+z^2)^{\frac{d+1}{2} }}\sim \frac{1}{\sigma^{-1}x},
  \]
  Equation \eqref{eq:phi} implies that
  $\frac{2}{\pi \psi(x)}\sim \tfrac{2 \Gamma \big(\tfrac{d+1}{2} \big)}{\sqrt{\pi} \Gamma\big( \frac{d}{2}\big) \sigma^{-1} x}$, or equivalently that $\psi(x)\sim \tfrac{ \Gamma\big( \frac{d}{2}\big) x}{\sqrt{\pi} \Gamma \big(\tfrac{d+1}{2} \big)} \sigma^{-1} x$, or equivalently that
  $\tfrac{\phi(x)}{x}\underset{x\to \infty}\longrightarrow \tfrac{\sqrt{\pi} \Gamma \big(\tfrac{d+1}{2} \big)}{ \Gamma\big( \frac{d}{2}\big)} \sigma$.

For $Z_\phi$ and $S$ to be close to each other, including when $S$ is large, this limit must be one, so that we must set $\sigma\coloneqq \tfrac{ \Gamma\big( \frac{d}{2}\big)}{\sqrt{\pi} \Gamma \big(\tfrac{d+1}{2} \big)}$.

A simple computation gives
\[ \psi(x)\int_{\psi(x)}^{\infty} \frac{\d z}{1+z^2}\underset{x\to +\infty}=1+O(x^{-2}), \quad \sigma^{-1}x\int_{\sigma^{-1} x}^{+\infty} \frac{z^{d-1}}{(1+z^2)^{\frac{d+1}{2} } }\d z\underset{x\to +\infty}= 1+O(x^{-2} ).
\]
We deduce that
\begin{align*}
\psi(x)
&=\Big(\int_{\psi(x)}^{\infty} \frac{\d z}{1+z^2}\Big)^{-1}(1+O(x^{-2}))
=\Big( \sigma^{-1} \int_{\sigma^{-1} x}^{+\infty} \frac{z^{d-1}}{(1+z^2)^{\frac{d+1}{2} } }\d z  \Big)^{-1}
(1+O(x^{-2}))\\
&= \Big(\tfrac{1}{x}(1+O(x^{-2})) \Big)^{-1}(1+O(x^{-2}))=x+O(x^{-1}).
\end{align*}
It follows that $\psi(x)-x$ is bounded near $+\infty$, and by symmetry it is bounded on $\R$. Thus, $Z_\phi-S$ is bounded, and so is $Z_\phi U-SU$.
\end{proof}

\begin{corollary}
\label{coro:4.7}
Let $\iota_K:\mathcal{P}_K\to \mathbb{N}$ be any bijection, independant from $X$ and $\mathcal{P}_K^G$ conditional to $\mathcal{P}_K$. Then,
$\mathbb{P}_X^R$-almost surely, as $K\to \infty$, the product \[\prod_{(g,x)\in \mathcal{P}_K^G\cap B_R} g^{\theta(x)},\] ordered according to $\iota_K$, converges in distribution towards $\nu^{\sigma}$, for $\sigma=\frac{\Gamma\big(\tfrac{d}{2}\big)}{2\sqrt{\pi} \Gamma\big(\tfrac{d+1}{2}\big)}$.
\end{corollary}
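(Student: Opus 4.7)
The idea is to rewrite the product into the form $\prod_{i=1}^n \exp(X_i/n)$ treated by Proposition~\ref{prop:orderedCauchy}, and apply that proposition conditionally on the Brownian motion $X$. Parametrise each $g\in \mathcal{P}_K^G$ as $g=\exp(U/K)$ with $U$ uniform on the unit sphere of $\mathfrak{g}$, so that $g^{\theta(x)}=\exp(\theta(x)U/K)$. Enumerate $\mathcal{P}_K\cap B_R$ in the order prescribed by $\iota_K$ as $x_1,\dots,x_{n_K}$ with marks $U_1,\dots,U_{n_K}$, where $n_K:=|\mathcal{P}_K\cap B_R|$ is $\mathrm{Poisson}(K\pi R^2)$. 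The product of interest equals
\[
\Pi_K \;=\; \prod_{i=1}^{n_K} \exp\!\left(\frac{W_i}{K}\right), \qquad W_i := \theta(x_i)\, U_i.
\]
Since $\iota_K$ is $\sigma(\mathcal{P}_K)$-measurable and, conditionally on $\mathcal{P}_K$, the marks $U_i$ are i.i.d.\ uniform on the sphere and independent of $\iota_K$, and since the $x_i$'s are themselves i.i.d.\ uniform on $B_R$ conditional on $n_K$, the sequence $(W_i)_i$ is i.i.d.\ conditionally on $X$; in particular the law of $\Pi_K$ conditional on $X$ does not depend on the choice of $\iota_K$.

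Now fix $X$ in the $\mathbb{P}^X$-full probability event afforded by Theorem~\ref{th:CauchyDomTheta}. Then $\theta(x_1)$ lies in the strong attraction domain of a Cauchy law with the scale given by that theorem, and Lemma~\ref{le:rot} yields $W_1\in \mathcal{D}_\delta(\nu^{\tau_R})$ for a stable parameter $\tau_R$ tuned so that $\pi R^2\,\tau_R$ equals the target $\sigma$. Using $n_K/K\to\pi R^2$ almost surely (law of large numbers for the Poisson process), rewrite
\[
\Pi_K \;=\; \prod_{i=1}^{n_K}\exp\!\left(\frac{(n_K/K)\, W_i}{n_K}\right).
\]
Setting $\tilde{X}_i := \pi R^2\, W_i \in \mathcal{D}_\delta(\nu^\sigma)$, Proposition~\ref{prop:orderedCauchy} applied conditionally on $X$ gives that $\prod_{i=1}^{n_K}\exp(\tilde{X}_i/n_K)$ converges in distribution to $\nu^\sigma$ along $K\to\infty$. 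It then remains to transfer this to $\Pi_K$, which differs only by replacing the deterministic scaling $\pi R^2$ with the random scaling $n_K/K\to \pi R^2$.

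The main technical obstacle is this last transfer: Proposition~\ref{prop:orderedCauchy} is stated for driving variables with a fixed law, whereas the effective driver $(n_K/K)\,W_i$ in $\Pi_K$ depends on $K$ through $n_K$. The cleanest route is to revisit the path-space proof of Proposition~\ref{prop:orderedCauchy}: there one has an $L^1$ control on the $p$-variation distance between the piecewise-linear path built from $\tilde{X}_i$ and a limiting stable process, and multiplying this piecewise-linear path uniformly by $n_K/(\pi R^2 K)\to 1$ alters its $p$-variation by a factor tending to $1$; the continuity of the Cartan development in $p$-variation for $p\in(1,2)$ then transports the convergence to the group level. Beyond this, the remaining work is simply bookkeeping of the constants in Theorem~\ref{th:CauchyDomTheta} and Lemma~\ref{le:rot} to verify that $\pi R^2\, \tau_R = \sigma$.
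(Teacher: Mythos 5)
Your route is the same as the paper's: place $\theta(x)$ in the strong attraction domain via Theorem~\ref{th:CauchyDomTheta}, pass to $\mathfrak g$-valued variables via Lemma~\ref{le:rot}, and conclude with Proposition~\ref{prop:orderedCauchy}. In fact the paper's proof is exactly that two-line citation, and your explicit treatment of what it leaves implicit is an improvement: the product has a Poisson number $n_K$ of factors normalised by $1/K$ rather than $n$ factors normalised by $1/n$, and your fix (condition on $n_K$, absorb the factor $n_K/K\to\pi R^2$ into the driving variables, and handle the residual random scaling at the path level using the $p$-variation estimates and the continuity of the development map $I$) is the right way to make the citation of Proposition~\ref{prop:orderedCauchy} legitimate; the same goes for tracking the factor $\pi R^2$ in the stable parameter.

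The one step that fails as written is the claim that, conditionally on $X$, the $\iota_K$-ordered sequence $(W_i)_i=(\theta(x_i)U_i)_i$ is i.i.d. The hypothesis only asks $\iota_K$ to be conditionally independent of $(X,\mathcal P^G_K)$ given $\mathcal P_K$, so it may depend on the positions themselves --- and the corollary is indeed invoked (via Proposition~\ref{prop:ish}) with the points ordered by their norm. For such an order the ordered positions are not i.i.d.\ uniform on $B_R$ (the first one is the point closest to the origin, whose winding has a different conditional law), so neither are the $W_i$, and Proposition~\ref{prop:orderedCauchy} does not apply directly. What rescues your (correct) intermediate conclusion that the conditional law of $\Pi_K$ does not depend on $\iota_K$ is different: conditionally on $(X,\mathcal P_K,\iota_K)$ the exponents $\theta(x_i)$ are deterministic and the factors $\exp(\theta(x_i)U_i/K)$ are independent with conjugation-invariant laws, and for independent conjugation-invariant elements the law of the product is invariant under any permutation of the factors (this is precisely the elementary braid lemma opening Section~\ref{sec:inv}, since the braid action preserves the product while being equal in law to the permutation action). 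Hence you may replace $\iota_K$ by a uniformly random, position-independent order, under which $(x_i,U_i)_i$ is genuinely i.i.d.; with that substitution the rest of your argument, including the $n_K/K$ transfer, goes through.
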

\begin{proof}
Theorem \ref{th:CauchyDomTheta} and Lemma \ref{le:rot} ensures that $g^{\theta(x)}$ lies in $D_\delta(\nu^{\sigma})$.  Proposition \ref{prop:orderedCauchy} allows to conclude.
\end{proof}

The problem that we now face, in order to prove the main theorem, is to replace this product with $\omega_K(\bar{X})$.

\section{Free groups}
\label{sec:free}

\subsection{Free groups as semi-direct product}
%

Let $\mathcal{P}=\{x_1,\dots,x_k\}$ be a totally ordered finite set.
We will build up a group isomorphism between the free group $\mathbb{F}_{\mathcal{P}}$ and the semi-direct product of free groups
$\mathbb{F}_{\mathcal{P}\setminus\{x_k\}} \ltimes \mathbb{F}_{\mathbb{F}_{ \mathcal{P}\setminus\{x_k\} } }$.

Let $\iota_\mathcal{Q}$ be the canonical injection of a set $\mathcal{Q}$ into the free group $\mathbb{F}_{\mathcal{Q}}$. We always make implicit the inclusion $\iota_\mathcal{P}$, but we write explicitly $\iota=\iota_{\mathbb{F}_{\mathcal{P}\setminus\{x_k\} }}$.
\begin{lemma}
Let $\pi^{k}: \mathbb{F}_{ \mathcal{P}} \to \mathbb{F}_{ \mathcal{P}\setminus\{x_k\}}$ be the canonical projection, $H$ the smallest normal subgroup of $\mathbb{F}_{ \mathcal{P}}$ containing $x_k$, and $W=\iota(\mathbb{F}_{ \mathcal{P}\setminus\{x_k\} })$, that is the set of the generators of $ \mathbb{F}_{ \mathbb{F}_{ \mathcal{P}\setminus\{x_k\} } }$.
\begin{enumerate}
\item The kernel of $\pi^{k}$ is $H$.
\item The homomorphism $c_k: \mathbb{F}_{ \mathbb{F}_{ \mathcal{P}\setminus\{x_k\} } } \to H$ which maps $\iota(h)\in W$ to $hx_k h^{-1}$
is an isomorphism.
\end{enumerate}
Therefore,
\[
0\longrightarrow  \mathbb{F}_{ \mathbb{F}_{ \mathcal{P}\setminus\{x_k\} } } \overset{c_k}{\longrightarrow} \mathbb{F}_{ \mathcal{P}} \overset{\pi^{k}}
{\rightleftarrows  }
 \mathbb{F}_{ \mathcal{P}\setminus\{{x_k}\}}\longrightarrow 0,
\]
is a split short exact sequence and $\mathbb{F}_{ \mathcal{P}}\cong \mathbb{F}_{ \mathcal{P}\setminus\{{x_k}\}} \ltimes \mathbb{F}_{ \mathbb{F}_{ \mathcal{P}\setminus\{{x_k}\} } }$.
%
%
\end{lemma}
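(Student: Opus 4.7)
My plan is to prove (1) and (2) separately; the splitting of the sequence then follows because the natural inclusion $s : \mathbb{F}_{\mathcal{P}\setminus\{x_k\}}\hookrightarrow \mathbb{F}_{\mathcal{P}}$ sending each $x_i$ to itself is a section of $\pi^{k}$.

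For (1), I would first observe that $H\subseteq \ker(\pi^{k})$, since $\pi^{k}(x_k)=1$ and $\ker(\pi^{k})$ is normal. For the reverse inclusion, I would consider the quotient map $q : \mathbb{F}_{\mathcal{P}}\to \mathbb{F}_{\mathcal{P}}/H$. Since $q(x_k)=1$, the universal property of the free group on $\mathcal{P}\setminus\{x_k\}$ provides a homomorphism $\mathbb{F}_{\mathcal{P}\setminus\{x_k\}}\to \mathbb{F}_{\mathcal{P}}/H$ sending $x_i\mapsto q(x_i)$, which is inverse to the homomorphism $\mathbb{F}_{\mathcal{P}}/H\to \mathbb{F}_{\mathcal{P}\setminus\{x_k\}}$ induced by $\pi^{k}$. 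Thus $\pi^{k}$ and $q$ share a kernel, yielding $\ker(\pi^{k})=H$.

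For (2), the map $c_k$ is well-defined by the universal property of $\mathbb{F}_{\mathbb{F}_{\mathcal{P}\setminus\{x_k\}}}$, and it lands in $H$ since each generator $h x_k h^{-1}$ is a conjugate of $x_k$. For surjectivity onto $H$, I would write an arbitrary $g\in \mathbb{F}_{\mathcal{P}}$ in the form $g=w_0 x_k^{\epsilon_1} w_1 \cdots x_k^{\epsilon_m} w_m$ with $w_i\in \mathbb{F}_{\mathcal{P}\setminus\{x_k\}}$; by (1), $g\in H$ is equivalent to $w_0 w_1\cdots w_m=1$, and setting $u_i=w_0\cdots w_{i-1}$ produces the decomposition $g=\prod_{i=1}^{m}(u_i x_k^{\epsilon_i} u_i^{-1})$, which manifestly lies in the image of $c_k$.

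The main obstacle is injectivity of $c_k$, which I would handle by a direct normal-form argument. Take a reduced word $w=\iota(h_1)^{\epsilon_1}\cdots \iota(h_n)^{\epsilon_n}$ in $\mathbb{F}_{\mathbb{F}_{\mathcal{P}\setminus\{x_k\}}}$ (that is, no adjacent pair satisfies $h_i=h_{i+1}$ and $\epsilon_i=-\epsilon_{i+1}$) and compute $c_k(w)=h_1 x_k^{\epsilon_1}(h_1^{-1}h_2) x_k^{\epsilon_2}\cdots x_k^{\epsilon_n} h_n^{-1}$ as a word in the generators $\mathcal{P}$. At each internal gap, either $h_i\neq h_{i+1}$, in which case the interposed $h_i^{-1}h_{i+1}\in \mathbb{F}_{\mathcal{P}\setminus\{x_k\}}\setminus\{1\}$ blocks the neighbouring $x_k^{\pm1}$ from cancelling, or $h_i=h_{i+1}$, in which case reducedness forces $\epsilon_i=\epsilon_{i+1}$ and the collision produces $x_k^{2\epsilon_i}\neq 1$. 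Either way, the $x_k^{\pm 1}$ letters survive free reduction in $\mathbb{F}_{\mathcal{P}}$, so $c_k(w)\neq 1$. An alternative would be to invoke the Nielsen--Schreier theorem applied to the covering of a wedge of circles corresponding to $H$, but the combinatorial argument above is self-contained. Combining (1), the section $s$, and the isomorphism $c_k$ finally yields the split short exact sequence and the semi-direct product decomposition $\mathbb{F}_{\mathcal{P}}\cong \mathbb{F}_{\mathcal{P}\setminus\{x_k\}}\ltimes \mathbb{F}_{\mathbb{F}_{\mathcal{P}\setminus\{x_k\}}}$.
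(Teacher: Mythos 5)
Your proof is correct, but the decisive step --- injectivity of $c_k$ --- is handled by a genuinely different argument than in the paper. You exploit the decomposition $\mathbb{F}_{\mathcal{P}}=\mathbb{F}_{\mathcal{P}\setminus\{x_k\}}*\langle x_k\rangle$ at the level of words: writing $c_k(w)=h_1x_k^{\epsilon_1}(h_1^{-1}h_2)x_k^{\epsilon_2}\cdots x_k^{\epsilon_n}h_n^{-1}$, you observe that the concatenation of the reduced words for the blocks $h_i^{-1}h_{i+1}$ with the interspersed letters $x_k^{\pm1}$ is already reduced (a letter of $\mathcal{P}\setminus\{x_k\}$ never cancels an $x_k^{\pm1}$, and when a block is empty the reducedness of $w$ forces equal signs), so the $x_k$-letters survive and $c_k(w)\neq 1$; this is essentially the normal-form theorem for the free product, made explicit, and it shows directly that the conjugates $hx_kh^{-1}$ form a free basis of $H$. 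The paper instead argues by contradiction: it takes a relation of minimal length, follows the corresponding walk in the Cayley graph of $\mathbb{F}_{\mathcal{P}}$, and uses that this graph is a tree to force the walk through the vertex $x_k^{-\epsilon_1}$, extracting a strictly shorter relation --- a more geometric route that avoids manipulating reduced words in $\mathbb{F}_{\mathbb{F}_{\mathcal{P}\setminus\{x_k\}}}$, at the cost of a minimality bookkeeping argument. Your treatment of the remaining points also reorganises the paper's: you get $\ker\pi^k=H$ from the universal property of quotients (the paper gets it from the telescoping identity $g=(h_1x_k^{\epsilon_1}h_1^{-1})\cdots(h_nx_k^{\epsilon_n}h_n^{-1})h_{n+1}$), and you then reuse exactly that telescoping identity to prove surjectivity of $c_k$, a step the paper leaves implicit in its proof of point (1). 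Net effect: your version is more self-contained and makes the free-basis statement transparent, while the paper's tree argument is shorter once one is willing to reason up to minimal counterexamples; both are complete.
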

We have not found this result on the existing litterature, but the kernel of $A\star B\to A \times B$ is described in \cite{serre} in a similar fashion. This description probably implies our lemma, but we have not been able to derive one from the other in a simple way.
\begin{proof}
\emph{First point:}

Since $\ker(\pi^{k})$ is a normal subgroup and ${x_k}\in \ker(\pi^{k})$, $H\subseteq \ker(\pi^{k})$.


Let $g\in\mathbb{F}_{ \mathcal{P}}$. Then, there exists $\epsilon_1,\dots, \epsilon_n\in \{\pm 1\}$ and $h'_1,\dots, h'_{n+1}\in \mathbb{F}_{ \mathcal{P}\setminus \{{x_k}\} }$ such that
\[g=h'_1{x_k}^{\epsilon_1}h'_2\dots h'_n {x_k}^{\epsilon_n} h'_{n+1}.\]
Setting $h_i= h'_1\dots h'_i$, we get
\[g=(h_1{x_k}^{\epsilon_1}h_1^{-1}) \dots (h_n {x_k}^{\epsilon_n} h_n^{-1}) h_{n+1}.\]
Then, $\pi^{k}(g)=\pi^{k}(h_{n+1})=h_{n+1}$.  Therefore, $g\in \ker (\pi^{k})$ implies $h_{n+1}=1$, which implies that
$g\in \langle hx_kh^{-1}: h\in  \mathbb{F}_{ \mathcal{P}\setminus \{x_k\} } \rangle_{\mathbb{F}_{ \mathcal{P} }}=H$.

\emph{Second point:}

 We only need to check that $x_k$ is injective. Let us assume otherwise, in which case there exists a positive integer $n$, $h_1,\dots, h_n\in \mathbb{F}_{ \mathbb{F}_{ \mathcal{P} \setminus \{ x_k\} } }$ and $\epsilon_1,\dots ,\epsilon_n$ such that
\[
  h_1x_k^{\epsilon_1}h_1^{-1} \dots h_n x_k^{\epsilon_n} h_n^{-1}=1\in \mathbb{F}_{ \mathcal{P}}, \ h_1^{\epsilon_1}\dots h_n^{\epsilon_n}\neq 1\in \mathbb{F}_{ (\mathcal{P}\setminus \{x_k \}) }.
\]
We assume that $n$ is minimal for this property, and we will come to a contradiction by finding an element with the same property but with a smaller $n$.
We will look at
\[
  g =x_k^{-\epsilon_1} h_1^{-1}     = h_1^{-1}h_2x_k^{\epsilon_2}h_2^{-1} \dots h_n x_k^{\epsilon_n} h_n^{-1} .
\]
For each $h_i$, we fix a sequence $h_{i,1},\dots, h_{i,j_i}\in (\mathcal{P}\setminus\{x_k\} )^{\pm 1}$ such that $h_i=h_{i,1}\dots h_{i,j_i}$. This induces a path from $1$ to $h_i$ in the Cayley graph $\Gamma$ of $\mathbb{F}_{ \mathcal{P}} $, with respect to the generating family $\mathcal{P}$ (acting on the right). Concatenating these paths together, we obtain a path $g_1=1,\dots, g_n=g$
from $1$ to $g$ in $\Gamma$.
Since $\Gamma$ is a tree, any path on it that reaches $x_k^{-\epsilon_1} h_1^{-1}$ must actually reach $x_k^{-\epsilon_1} $. In particular, there exists $i$ such that
$g_i=x_k^{-\epsilon_1} $. If we assume $i$ to be minimal with this property, then necessarily $g_{i-1}^{-1}g_i= x_k^{-\epsilon_1}$ (the first time we reach
$x_k^{-\epsilon_1}$, it is necessarily by crossing the edge from $1$ to $x_k^{-\epsilon_1}$), so that
there exists $j$ such that $g_i= h_1^{-1}(h_2 x_k^{\epsilon_2} h_2^{-1})\dots h_j x_k^{\epsilon_j}$, with $\epsilon_j=-\epsilon_1$.

Therefore,
\begin{align}
1&= h_1 x_k^{\epsilon_1} g_i  h_j^{-1} h_{j+1} x_k^{\epsilon_{j+1}} h_{j+1}^{-1}\dots h_{n} x_k^{\epsilon_{n}} h_{n}^{-1}\nonumber\\
\label{eq:minim}&= h_1  h_j^{-1} h_{j+1} x_k^{\epsilon_{j+1}} h_{j+1}^{-1}\dots h_{n} x_k^{\epsilon_{n}} h_{n}^{-1}.
\end{align}
Projecting \eqref{eq:minim} in $\mathbb{F}_{ (\mathcal{P}\setminus \{x_k\})}$,
we obtain
\[ 1=\pi^{k}(1)=\pi^{k}(h_1  h_j^{-1} h_{j+1} x_k^{\epsilon_{j+1}} h_{j+1}^{-1}\dots h_{n} x_k^{\epsilon_{n}} h_{n}^{-1})=h_1  h_j^{-1},\]
hence $h_j=h_1$. Back to \eqref{eq:minim}, we get
\[1= h_{j+1} x_k^{\epsilon_{j+1}} h_{j+1}^{-1}\dots h_{n} x_k^{\epsilon_{n}} h_{n}^{-1},\]
from which it also follows that
\[1=h_1 x_k^{\epsilon_1}h_1^{-1}\dots h_j x_k^{\epsilon_j}h_j^{-1}.\]
Since it is not possible that both $h_1^{\epsilon_1}\dots h_j^{\epsilon_j}=1$ and $h_{j+1}^{\epsilon_{j+1}}\dots h_n^{\epsilon_n}=1$, we obtain at least one non-trivial relation, which is of lesser length unless $j=n$. In the case $j=n$, since $\epsilon_j=-\epsilon_1$ and $h_j=h_1$, we obtain the relation of lesser
length
\[
h_2x_k^{\epsilon_2}h_2^{-1} \dots h_{n-1} x_k^{\epsilon_{n-1} } h_{n-1}^{-1}=1, \ h_2^{\epsilon_2}\dots h_{n-1}^{\epsilon_{n-1}}\neq 1,
\]
which is equally absurd. This concludes the proof.
\end{proof}


One can then iterate this procedure, up to the point we obtain
\[  \mathbb{F}_{ \mathcal{P} }\cong
\big( \dots \big( \mathbb{F}_{\{x_1\}} \ltimes \mathbb{F}_{\{x_1,x_2\}}\big)\ltimes \dots  \big)\ltimes \mathbb{F}_{\mathbb{F}_{ (\mathcal{P}-\{x_k\}) }} \big).
\]
We write $\phi_j(g)$ the element on the component  $\mathbb{F}_{\mathbb{F}_{  \{x_1,\dots , x_{j-1} \}}}$, so that
\[g=c_{1} \circ\phi_{1}(g)\dots c_{k} \circ\phi_{k}(g).\]

To be more down-to-earth, we simply wrote $g\in \mathbb{F}_{ \mathcal{P}} $ as
\[g=g' (w_1x_k^{\epsilon_1}w_1^{-1})\dots (w_nx_k^{\epsilon_n}w_n^{-1}), \quad \mbox{with} \quad g',w_1,\dots, w_n \in \mathbb{F}_{ \mathcal{P}\setminus \{x_k\} },\]
 and we then iterated to decompose $g'$ in a similar manner.
As an example, the corresponding writing of $g=x_3x_2x_1x_4x_2x_4^{-1}$ is
\begin{align*} g
&={\color{green!50!blue}x_1 }   ( x_1^{-1}  {\color{green!50!blue} x_2}x_1)
 {\color{green!50!blue} x_2 }
( x_2^{-1} x_1^{-1} x_2^{-1} {\color{green!50!blue} x_3} x_2x_1
x_2)
(x_2^{-1}{\color{green!50!blue}x_4} x_2){\color{green!50!blue} x_4^{-1}}.
\end{align*}

\subsection{Some notation}
For a set $\mathcal{Q}$ (which is either $\mathcal{P}$ or $\mathbb{F}_{ \mathcal{P}\setminus\{x_k\}}$) 
and $g\in \mathbb{F}_{ \mathcal{Q}}$, we define an integer $\ell(g)$ and a finite sequence \[(a_i(g), \alpha_i(g) )_{i\in \{1,\dots, \ell(g) \}} \]
as the unique finite sequence with values in $\mathcal{Q}\times (\mathbb{Z}\setminus\{0\} )$, such that
\[g= \underbrace{a_1(g)\dots a_1(g)}_{\alpha_1(g) }\dots \dots \dots \underbrace{a_{\ell(g)}(g)\dots a_{\ell(g)}(g)}_{\alpha_{\ell(g)}(g) }=\prod_{i=1}^{\ell(g)} a_i(g)^{\alpha_i(g)},\]
and such that
$a_i(g)\neq a_{i+1}(g)$ for all $i\in \{1,\dots, \ell(g)-1\}$. For $i>\ell(g)$, we also set $\alpha_i(g)=0$.


An elementary but useful fact is that for $\mathcal{Q}=\mathbb{F}_{ \mathcal{P}\setminus\{x_k\} }$,
for all $h\in \mathbb{F}_{ \mathbb{F}_{ \mathcal{P}\setminus\{x_k\} }}$, 
\[
c_{k}(h)=\prod_{i=1}^{\ell(h)} a_i(h) x_k^{\alpha_i(h) }a_i(h)^{-1}.
\]

For $x\in \mathcal{Q}$ and $g\in \mathbf{Z}^{\star \mathcal{Q}}$, let $I_{x}(g)$ be the set of the indices $i$ such that $a_i(g)=x$:
\[
I_{x}(g)=\{i\in \{1,\dots, \ell(g)\}, a_i(g)=x\}.
\]
For $k\in \{1,\dots,|I_{x}(g)|\}$, we set $i_{x,k}(g)$ the $k^{\mbox{\scriptsize th}}$ element of $I_{x}(g)$, so that
\[ I_{x}(g)=\{ i_{x,1}(g),\dots,  i_{x,|I_{x}(g)|}(g) \}, \quad  i_{x,1}(g)<\dots< i_{x,|I_{x}(g)|}(g).\]
We also set $\alpha_{x,k}(g)=\alpha_{i_{x,k}(g)}(g)$, and for $k > |I_{x}(g)| $ we set $\alpha_{x,k}(g)=0$. This gives us an ultimately vanishing sequence $\alpha_{x}(g)=(\alpha_{x,k}(g) )_{k\in \mathbb{N}}$, the sequence of the exponents to which $x$ appears in $g$.

\subsection{An inequality }

The following lemma explains in which sense writing $g$ as the product $c_{1}\phi_{1}(g)\dots c_{k}\phi_{k}(g)$ makes it `shorter' than writing it as $a_1(g)^{\alpha_1(g)}\dots a_{\ell(g)}(g)^{\alpha_{\ell(g)}(g)}$.

We endow the set of ultimately vanishing sequences with an order $\preccurlyeq$ obtain as the reflexive and transitive closure of the relation $\mathcal{R}$ given by
\[
u \ \mathcal{R} \ v \iff  \exists i\in \mathbb{N}: \forall j\in \mathbb{N}, u_j=\left\{\begin{array}{ll} v_j & \mbox{if } j<i,\\ v_i+v_{i+1} &\mbox{if } j=i, \\ v_{j+1} &\mbox{if } j>i. \end{array} \right.
\]
In terms of partitions, $u\preccurlyeq v$ corresponds to $v$ being finer than $u$.

For $g\in \mathbb{F}_{ \mathcal{P}}$, if one can find a sequence $u=(u_1,\dots,u_n,0,\dots)$ and $g_1,\dots, g_{n+1}\in \mathbb{F}_{ \{x_1,\dots, x_{j-1}\}  }$ such that $g=g_1x_j^{u_1}g_2 x_j^{u_2}\dots x_j^{u_n} g_{n+1}$,
then $\alpha_{x_j}(g)\preccurlyeq u$.

Besides, with $\|u\|_{l^1}=\sum_{n\in \mathbb{N}} |u_n|$,
\[u\preccurlyeq v\implies \|u\|_{l^1}\leq \|v\|_{l^1}.\]


\begin{lemma}
\label{le:decompo}
  Let $g\in \mathbb{F}_{ \mathcal{P}}$ and $x=x_j\in \mathcal{P}$. Let
  \[
    \pi^{<x}:\mathbb{F}_{ \mathcal{P}}\to \mathbb{F}_{ \{x_1,\dots, x_{j-1} \}} \quad \mbox{and} \quad \pi^{\leq x}:\mathbb{F}_{ \mathcal{P}}\to \mathbb{F}_{ \{x_1,\dots, x_{j} \}}
  \] be the canonical projections.
  Then, $c_j\circ \phi_{j}(g)$ admits the following explicit expression:
  \begin{equation}
  \label{eq:borring}
    c_j\circ \phi_{j}(g)=\prod_{i\in I_{x}(g)} \Big(\pi^{<x}\Big( \prod_{k=i+1}^{\ell(g)} a_k(g)^{\alpha_k(g)} \Big)^{-1} x^{\alpha_i(g)} \pi^{<x}\Big( \prod_{k=i+1}^{\ell(g)} a_k(g)^{\alpha_k(g)} \Big)\Big).
  \end{equation}
\end{lemma}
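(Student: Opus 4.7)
The plan is to first prove the formula in the case where $x=x_k$ is the top generator, and then to reduce the general case to this via the recursive structure of the decomposition. The reduction is straightforward: at each stage the decomposition peels off the top generator via the split exact sequence, and because the successive splittings are unique, the factor $c_j\phi_j(g)$ is unaffected by the later factors. In particular, $c_j\phi_j(g) = c_j\phi_j(g^{(j)})$ where $g^{(j)} := \pi^{\leq x_j}(g) \in \mathbb{F}_{\{x_1,\dots,x_j\}}$. It therefore suffices to prove the formula for $g^{(j)}$ with its top generator being $x_j$ and then to perform a combinatorial comparison to recover the expression in terms of $g$.

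For the base case ($j=k$), I would write the standard form as
\[
g = B_0\, x_k^{\alpha_{i_1}}\, B_1\, x_k^{\alpha_{i_2}} \cdots x_k^{\alpha_{i_m}}\, B_m,
\]
where $I_{x_k}(g) = \{i_1<\cdots<i_m\}$ and each $B_p$ is the product of $a_\ell(g)^{\alpha_\ell(g)}$ for $i_p<\ell<i_{p+1}$ (with $i_0=0$, $i_{m+1}=\ell(g)+1$); in particular no $B_p$ contains $x_k$. Viewing $\mathbb{F}_{\mathcal{P}\setminus\{x_k\}}$ inside $\mathbb{F}_\mathcal{P}$ via the canonical inclusion, one has $\pi^k(g) = B_0 B_1\cdots B_m$, and the split exact sequence gives $c_k\phi_k(g) = \pi^k(g)^{-1} g$. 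Introducing the suffix $C_p := B_p B_{p+1}\cdots B_m$, which equals $\pi^{<x_k}(T_{i_p}(g))$ since $\pi^{<x_k}$ kills $x_k$, a telescoping computation based on the identity $C_p^{-1} B_p = C_{p+1}^{-1}$ produces
\[
\pi^k(g)^{-1} g = \prod_{p=1}^{m} C_p^{-1}\, x_k^{\alpha_{i_p}}\, C_p,
\]
which is precisely \eqref{eq:borring} for $j=k$.

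The main bookkeeping, and likely the only real obstacle, is the combinatorial identity needed to pass from the formula for $g^{(j)}$ to the claimed formula for $g$. In going from $g$ to $g^{(j)} = \pi^{\leq x_j}(g)$, two consecutive occurrences of $x_j$ in the standard form of $g$ whose intermediate letters all belong to $\{x_{j+1},\dots,x_k\}$ merge into a single factor in the standard form of $g^{(j)}$, with exponents summed. Each $i' \in I_{x_j}(g^{(j)})$ therefore corresponds to a maximal run $i_1<\cdots<i_s$ in $I_{x_j}(g)$ with $\alpha_{i'}(g^{(j)}) = \sum_p \alpha_{i_p}(g)$. Crucially, $\pi^{<x_j}$ annihilates $x_j$ as well as every $x_\ell$ with $\ell>j$, so the common value $C := \pi^{<x_j}(T_{i_p}(g))$ is independent of $p$ within a run and coincides with $\pi^{<x_j}(T_{i'}(g^{(j)}))$. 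Hence the $s$ factors $C^{-1} x_j^{\alpha_{i_p}(g)} C$ appearing in the $g$-formula collapse to the single factor $C^{-1} x_j^{\alpha_{i'}(g^{(j)})} C$ appearing in the $g^{(j)}$-formula, and concatenating over all runs $i' \in I_{x_j}(g^{(j)})$ yields the desired equality.
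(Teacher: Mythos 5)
Your base case is sound: for the top generator the identity $c_k\phi_k(g)=\pi^k(g)^{-1}g$ plus the telescoping $C_p^{-1}B_p=C_{p+1}^{-1}$ gives exactly \eqref{eq:borring}, and the observation $c_j\phi_j(g)=c_j\phi_j(\pi^{\leq x_j}(g))$ is also correct. The gap is in the step you yourself flag as the main bookkeeping: the claimed correspondence between $I_{x_j}(g^{(j)})$ and maximal runs of $x_j$-occurrences of $g$ separated only by letters $>x_j$ is false. Erasing the large letters can trigger cascading cancellations among the letters $<x_j$, so occurrences from \emph{different} maximal runs can merge, runs with zero exponent-sum can disappear, and the merged syllables need not carry a common conjugator. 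Concretely, with $x_1<x_2<x_3$ and $x=x_2$: for $g=x_2\,x_1\,x_3\,x_1^{-1}\,x_2$ the two occurrences of $x_2$ lie in distinct maximal runs (the letter $x_1$ sits between them), yet $\pi^{\leq x_2}(g)=x_2^2$ has a single syllable of exponent $2$, which corresponds to no single run; and for $g=x_2\,x_1\,x_2\,x_3\,x_2^{-1}\,x_1^{-1}\,x_2$ the four occurrences all collapse into the one syllable $x_2^2$ while their conjugators $\pi^{<x_2}(T_i(g))$ are $1,\,x_1^{-1},\,x_1^{-1},\,1$, so the ``common value $C$ within a run coincides with $\pi^{<x_j}(T_{i'}(g^{(j)}))$'' mechanism does not cover what actually happens. (The final formula is still true in these examples, but your argument for it breaks.)

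The fix is to drop the detour through the canonical form of $g^{(j)}$ altogether: your base-case computation works verbatim for an arbitrary $x_j$, because it never uses maximality of the letter. Indeed, by the iterated splitting one has $c_j\phi_j(g)=\pi^{<x_j}(g)^{-1}\pi^{\leq x_j}(g)$ (the paper records this identity right after Corollary \ref{coro:bound}); then set $B_p:=\pi^{<x_j}\big(\prod_{i_p<k<i_{p+1}}a_k(g)^{\alpha_k(g)}\big)$ and $C_p:=\pi^{<x_j}(T_{i_p}(g))=B_p\cdots B_m$, note the group identities $\pi^{\leq x_j}(g)=B_0x_j^{\alpha_{i_1}}B_1\cdots x_j^{\alpha_{i_m}}B_m$ and $\pi^{<x_j}(g)=B_0\cdots B_m$ (no reduction to canonical form is needed, only equality in the group), and the same telescoping yields \eqref{eq:borring} directly. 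This is essentially the computation the paper alludes to (``erase the letters greater than $x_j$ and remark that many simplifications occur''), so with that repair your proof coincides with the intended one.
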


\emph{The proof of Lemma \ref{le:decompo} consists on first erasing the letters greater than $x_j$, take the right-hand side of \eqref{eq:borring}, and remark that many simplications occur. We omit the full computation, which is elementary but long, not enlightening nor appealing.}

\begin{corollary}
\label{coro:bound}
Let $g\in \mathbb{F}_{ \mathcal{P}}$ and $x=x_j\in \mathcal{P}$.
Then,
\[
\alpha_{x_j}( c_j\circ \phi_j(g) ) \preccurlyeq  \alpha_{x_j}( \pi^{\leq x_j} (g)   )
\preccurlyeq \alpha_{x_j}(g).\]

In particular,
\[
\|\alpha_{x_j}( c_j\circ \phi_j(g) )\|_{l^1}\leq \| \alpha_{x_j}(\pi^{\leq j}(g))\|_{l^1}\leq\| \alpha_{x_j}(g)\|_{l^1}.
\]
\end{corollary}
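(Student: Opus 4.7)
The plan is to read off the $x_j$-exponent sequences of both $\pi^{\leq x_j}(g)$ and $c_j\circ\phi_j(g)$ directly from the reduced form of $g$, with the description of $c_j\circ\phi_j(g)$ furnished by Lemma~\ref{le:decompo}, and to verify that each arises from $\alpha_{x_j}(g)$ by iterating the fusion relation $\mathcal{R}$.

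For the second inequality, write $g=\prod_k a_k(g)^{\alpha_k(g)}$ in reduced form. The morphism $\pi^{\leq x_j}$ erases every letter greater than $x_j$ while preserving those in $\{x_1,\dots,x_j\}$, and in particular keeps the positions in $I_{x_j}(g)$ intact. Two consecutive indices $i<i'\in I_{x_j}(g)$ become adjacent in the reduced form of $\pi^{\leq x_j}(g)$ precisely when the image
\[
\pi^{\leq x_j}\Big(\prod_{m=i+1}^{i'-1} a_m(g)^{\alpha_m(g)}\Big)
\]
is trivial, and the resulting factor is $x_j^{\alpha_i(g)+\alpha_{i'}(g)}$. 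This is exactly one step of $\mathcal{R}$ applied to the exponent sequence, and iterating (possibly using further steps of $\mathcal{R}$ to absorb zero entries produced by cancellations) yields $\alpha_{x_j}(\pi^{\leq x_j}(g))\preccurlyeq\alpha_{x_j}(g)$.

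For the first inequality, I would feed the formula from Lemma~\ref{le:decompo},
\[
c_j\circ\phi_j(g)=\prod_{i\in I_{x_j}(g)} w_i^{-1}\,x_j^{\alpha_i(g)}\,w_i, \qquad w_i=\pi^{<x_j}\Big(\prod_{m=i+1}^{\ell(g)} a_m(g)^{\alpha_m(g)}\Big),
\]
into the same analysis. In the reduced form of this product, two adjacent factors $w_i^{-1}x_j^{\alpha_i(g)}w_i$ and $w_{i'}^{-1}x_j^{\alpha_{i'}(g)}w_{i'}$ merge precisely when $w_i=w_{i'}$. Using that $\pi^{<x_j}$ is a morphism annihilating $a_{i'}(g)=x_j$, one computes
\[
w_i w_{i'}^{-1}=\pi^{<x_j}\Big(\prod_{m=i+1}^{i'-1} a_m(g)^{\alpha_m(g)}\Big);
\]
since no $a_m(g)$ in the range $i<m<i'$ equals $x_j$, the operations $\pi^{<x_j}$ and $\pi^{\leq x_j}$ act identically on this subword. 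Hence the condition $w_i=w_{i'}$ is equivalent to the fusion condition already identified for $\pi^{\leq x_j}(g)$, the same pattern of fusions occurs, and reading off the $x_j$-exponents yields $\alpha_{x_j}(c_j\circ\phi_j(g))\preccurlyeq\alpha_{x_j}(\pi^{\leq x_j}(g))$.

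The $\ell^1$-bound is immediate from the $\|\cdot\|_{l^1}$-monotonicity of $\preccurlyeq$ noted just before the lemma. The one step requiring some care is the bookkeeping when a fused exponent happens to sum to zero: the vanishing factor $x_j^0$ triggers a secondary simplification among the neighbouring $<x_j$-letters, which may enable additional fusions. This cascade is governed on both sides by the very same words in $\mathbb{F}_{\mathcal{P}\setminus\{x_j,\dots,x_k\}}$, so it unfolds identically, and it is absorbed into the transitive closure in the definition of $\preccurlyeq$ (a zero entry produced by $\mathcal{R}$ can itself be merged away by a further application of $\mathcal{R}$), posing no additional obstacle.
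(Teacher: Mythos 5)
Your argument is correct and follows essentially the paper's route: both rest on the explicit formula of Lemma \ref{le:decompo} combined with the fact that merging adjacent exponent blocks (the relation $\mathcal{R}$, iterated, including absorption of zero entries) only moves the sequence down for $\preccurlyeq$. The paper's own proof is simply the shortcut of applying that lemma to $\pi^{\leq x_j}(g)$ instead of $g$ (legitimate since $c_j\circ\phi_j(g)=(\pi^{<x_j}(g))^{-1}\pi^{\leq x_j}(g)$ depends only on $\pi^{\leq x_j}(g)$), so your hands-on computation of $w_i w_{i'}^{-1}$ and the matching of fusion patterns is a more explicit verification of the ``elementary fact'' stated just before the lemma, not a different method.
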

\begin{proof}
It suffices to apply the previous lemma to $\pi^{\leq x_j}(g)$ instead of $g$.
\end{proof}
Remark that the corollary can also be deduced without using the lemma, and using instead the fact that $c_j\circ \phi_j(g)= (\pi^{<x_j}(g))^{-1}\pi^{\leq x_j}(g)$.

The proper way to use this corollary is the following. Given the set $\mathcal{P}$, choose an order on $\mathcal{P}$ in a particular. Then, instead of studying
$\alpha_{x_j}( c_j\circ \phi_j(g) )$, study instead the sequence $\alpha_{x_j}(\pi^{\leq x_j}(g))$. If $g$ is given as the homotopy class of $\gamma$ in $\pi_1(\mathcal{P})$, then $\pi^{\leq x_j}(g)$ is given as the homotopy class of $\gamma$ in $\pi_1(\{x_1,\dots, x_j )$. This implies that we can forget about all the points $x_k$ with $k>j$ when studying the windings around $x_j$. Of course, the efficiency of this simplification depends strongly on the specific choice of order on $\mathcal{P}$.

When an order is fixed on $\mathcal{P}=\{x_1<x_2<\dots\}$, we will write $\mathcal{P}_{x_j}=\{x_1,\dots, x_j\}$. 

\section{Relations with paths}
\label{sec:relations}
In this section, we let $\mathcal{P}$ be a finite subset of $\R^2\setminus\{0\}$, and $\gamma:[0,1]\to \R^2\setminus\mathcal{P}$ be a continuous function satisfying $\gamma(0)=0$. It is assumed that there is no pair of distinct points $x,y\in \mathcal{P}$ aligned with $0$ (i.e. such that $\widehat{x0y}=0$), and that there is no pair of distinct points $x,y\in \mathcal{P}$ with same norms. One can already think of $\gamma$ as being equal to the Brownian motion $X$, since it is the only curve to which we will apply the results of this section, but these results hold with full generality.

For $s<t\in [0,1]$, we define $\gamma^{s,t}$ as the loop $\gamma^{s,t}=[0,\gamma_s]\cdot \gamma_{[s,t]}\cdot [\gamma_t,0]$. Its class $[\gamma^{s,t}]$ on $\pi_1(\mathcal{P})$ is denoted $g^{s,t}$, and its class on $\pi_1(\mathcal{P}_x)$ is denoted $g^{s,t}_x=\pi^{\leq x}(g^{s,t})$.
We also set $\gamma^t=\gamma^{0,t}$, $g^t=g^{0,t}$ and $g^t_x=g^{0,t}_x$. For $s>t$, we set $g^{s,t}=(g^{t,s})^{-1}$ and $g^{s,t}_x=(g_x^{t,s})^{-1}$.

Remark that $g^{s,t}$ is ill-defined as soon as $\mathcal{P}\cap[0,s]$ or $\mathcal{P}\cap[0,t]$ is non-empty. Since we ultimately want $\gamma$ to be a Brownian motion, such $s$ and $t$ can be numerous, and it is not possible to extend $g^{s,t}$ or even $g^t$, by right or left continuity.

Remark also that $g_{(x)}^{s,t}=g_{(x)}^{s,u}g_{(x)}^{u,t}$, and in particular $g_{(x)}^{s,t}=(g_{(x)}^s)^{-1}g_{(x)}^t$, as soon as the three classes are well-defined.
\subsection{Following the Cayley geodesic along the path}
%
%

 Let us set $h_0=1,\dots, h_N=g^1$ the geodesic walk from $1$ to $g^{1}$ in the Cayley graph $\Gamma$ of $\mathbb{F}_{ \mathcal{P}}$, with respect to the generating family $\mathcal{P}$. In other words, $h_i$ is the prefix of length $i$ of the word
 \[ \prod_{j=1}^{\ell(g)} a_i(g)^{\alpha_i(g)}.\]

 For $i\in \{0,\dots, N\}$,
 we also set $x_i\in \mathcal{P}$ and $\epsilon_i\in \{\pm 1\}$ the unique elements such that $h_i=h_{i-1}x_i^{\epsilon_i}$ and
 \[
 T_i=\inf \{t\in [0,1]: g^t=h_i\}.
 \]
Be careful that, when $\gamma$ is random, the $T_i$ are,  in general, \emph{not} stopping times with respect to the filtration $\mathcal{F}=(\mathcal{F}_t)_{t\in [0,1]}$ generated by $\gamma$: they are only stopping times with respect to the filtration $(\sigma(\mathcal{F}_t,g^1) )_{t\in [0,1]}$ enlarged by the data of $g^1$.

Since $\Gamma$ is a tree and the jumps of $g^t$ are edges on this tree, one has $T_0=0<T_1<\dots<T_N$.

For a finite set $\mathcal{P}$, we define
\begin{equation}
  \label{def:delta}
  \delta(\mathcal{P})=\min \{d(x,y): x,y\in \mathcal{P}\cup \{0\}, x\neq y\}.
\end{equation}
We write $\delta$ for $\delta(\mathcal{P})$ when it is clear what is the underlying set. For $i\in \{1,\dots, N\}$, we set $U_i$ the possibly infinite time
\[
  U_i=\inf \{t>T_i: d(\gamma_t,x_i)\geq \delta\}.
\]
\begin{lemma}
\label{le:Uexists}
Let $i<k$ be such that $x_i=x_k$. Assume that there exists $j$ such that $i<j<k$ and and $x_j\neq x_{i}$. Then, $U_i<T_{k}$.
\end{lemma}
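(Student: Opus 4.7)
The plan is to argue by contradiction. Suppose $U_i\geq T_k$, and write $x:=x_i=x_k$; this means $\gamma_{[T_i,T_k]}\subset\overline{B(x,\delta)}$. I will deduce that the homotopy class $g^{T_i,T_k}\in\pi_1(\mathcal{P})$ must be a power of $x$, contradicting the fact that its reduced expression in $\mathbb{F}_{\mathcal{P}}$ contains the letter $x_j\neq x$.

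First I would show that both $\gamma_{T_i}$ and $\gamma_{T_k}$ lie on the straight segment $[0,x]$. The argument is the following tree-theoretic observation: at time $T_i^-$ the class $g^{T_i^-}$ is some neighbour of $h_i$ in the Cayley tree $\Gamma$; if it were any neighbour other than $h_{i-1}$, the unique tree-geodesic from $1$ to it would pass through $h_i$, so $g^t$ (which moves along edges of $\Gamma$) would have visited $h_i$ strictly before $T_i$, contradicting the definition of $T_i$. Therefore $g^{T_i^-}=h_{i-1}$, so the class change $h_{i-1}\to h_i=h_{i-1}x^{\epsilon_i}$ is produced by $\gamma$ crossing $[0,x]$, and $\gamma_{T_i}\in[0,x]$; the same argument applied at $T_k$ gives $\gamma_{T_k}\in[0,x]$.

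Combining with $\gamma_{[T_i,T_k]}\subset\overline{B(x,\delta)}$, the loop $L:=\gamma^{T_i,T_k}$ is entirely contained in $Z:=[0,x]\cup\overline{B(x,\delta)}$. By the definition of $\delta$ together with the no-alignment hypothesis, $Z$ meets $\mathcal{P}$ only at $x$ and possibly at finitely many points on the circle $\partial B(x,\delta)$; a direct check shows these extra boundary punctures do not affect the homotopy type, so $Z\setminus \mathcal{P}$ is homotopy equivalent to a circle (encircling $x$) with a tail attached at $0$, giving $\pi_1(Z\setminus\mathcal{P},0)\cong\mathbb{Z}$. Moreover the natural generator, namely a loop based at $0$ going via $[0,x]$ to near $x$, once around $x$, and back, is precisely $\ell_x$ in the proper basis $\mathfrak{b}_0$. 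Under the inclusion $Z\setminus\mathcal{P}\hookrightarrow\R^2\setminus\mathcal{P}$ the class of $L$ therefore equals $x^n$ for some $n\in\mathbb{Z}$.

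On the other hand, using $g^{s,t}=(g^s)^{-1}g^t$ one computes $[L]=h_i^{-1}h_k=x_{i+1}^{\epsilon_{i+1}}\cdots x_k^{\epsilon_k}$, and this is a reduced word since it is a subword of the reduced expression of $g^1$. The hypothesis $x_j\neq x$ for some $i<j<k$ prevents it from being a power of $x$, contradicting the previous paragraph; hence $U_i<T_k$. The only real subtlety, and what I expect would fail under a naive approach, is that the argument must exploit the full non-abelian (reduced-word) information: abelianising $[L]$ into winding numbers would allow the contribution from $x_j$ to be cancelled by further visits to the segment $[0,x_j]$, and would therefore not close the argument.
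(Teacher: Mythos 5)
Your overall strategy---trap $\gamma_{[T_i,T_k]}$ in $\overline{B(x,\delta)}$, deduce that the class picked up between $T_i$ and $T_k$ can only involve the letter $x$, and contradict the appearance of $x_j$---is sound and close in spirit to the paper's. But there is a genuine gap in how you treat the times $T_i$ and $T_k$. With the proper basis $\mathfrak{b}_0$, the class $g^t$ changes by $x^{\pm 1}$ exactly when the chord $[0,\gamma_t]$ sweeps through the puncture $x$, i.e.\ when $\gamma_t$ crosses the half-line $\{\lambda x:\lambda\geq 1\}$ \emph{beyond} $x$; crossing the segment $[0,x)$ does not change $g^t$ at all. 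So your claim that $\gamma_{T_i},\gamma_{T_k}\in[0,x]$ is wrong and, more seriously, at the exact times $T_i,T_k$ the chords $[0,\gamma_{T_i}]$ and $[0,\gamma_{T_k}]$ contain the puncture $x$ itself: the classes $g^{T_i}$, $g^{T_k}$ and hence $g^{T_i,T_k}$ are ill-defined (the paper explicitly notes this failure of definition), and your loop $L=\gamma^{T_i,T_k}$ is not a loop in $Z\setminus\mathcal{P}$ since it passes through $x$. Thus both the identity $[L]=h_i^{-1}h_k$ and the step ``$[L]$ is a power of $x$ because $L\subset Z\setminus\mathcal{P}$'' fail as written (even with the corrected location of $\gamma_{T_i}$ the chords do lie in $Z$, but through $x$).

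The argument can be repaired, but the repair is precisely the technical heart of the paper's proof. One must work at times $T_i+\eta$ and $T_k-\eta$, where $g^{T_i+\eta}=h_i$ and (by your own tree argument applied at $T_k$) $g^{T_k-\eta}=h_{k-1}$, so the relevant class is $x_{i+1}^{\epsilon_{i+1}}\cdots x_{k-1}^{\epsilon_{k-1}}$, which still contains $x_j$. The chords now pass near, but not through, $x$, and one has to check that the thin wedge they sweep around $[0,x]$ contains no puncture other than $x$ (this is where the hypothesis that no two punctures are aligned with $0$ enters) before deforming the loop into a region where only $x$ matters; this is exactly the role of the times $T^+_l$, $T^-_m$ and of the ``infinitely thin triangle'' in the paper. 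Once that is in place, your endgame---$\pi_1(Z\setminus\mathcal{P},0)\cong\mathbb{Z}$ generated by $\ell_x$, so the class would be a power of $x$---is a legitimate variant of the paper's conclusion, which instead deletes the puncture $x$ and plays non-contractibility in $\R^2\setminus(\mathcal{P}\setminus\{x\})$ against the contractibility of $B(x_i,\delta)$; your closing remark about why abelianized winding numbers would not suffice is correct.
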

\begin{proof}
Let $l=\min\{ n>i: x_n\neq x_i\}-1$ and $m=\min \{n>l: x_n=x_i\}$. Then, $l\geq i$ and $m\leq k$, so that it sufficies to show $U_{l}<T_m$.
Let $T_{l}^+> T_{l}$ and $T_{m}^-< T_{m}$ be such that  
$g^{T_{l}^+}=h_{l}$ and $g^{T_m^-}=h_{m-1}$.

Then $g^{T^+_l,T^-_l}=h_l^{-1}h_{m-1}\neq 1$. Since $x_n\neq x_i$ for all $n\in\{l+1,\dots,m-1\}$, the word
\[ x_{l+1}^{\epsilon_{l+1}}\dots x_{m-1}^{\epsilon_{m-1}}\]
representing $h_l^{-1}h_{m-1}$, and hence different from $1$, is also the word representing the homotopy class of $\gamma^{T^+_l,T^-_l}$ in $E=\R^2\setminus(\mathcal{P}\setminus \{x_i\})$. It follows that $\gamma^{T^+_l,T^-_l}$ is non-contractible in $E$.
As $T^+_l$ approaches $T_l$ and $T_m^-$ approaches $T_m$, both $\gamma_{T^+_{l}}$ and
$\gamma_{T^-_{m}}$ approaches the axe passing through $0$ and $x_i$. Therefore, the triangle with vertices $0$, $T^+_l$ and $T^-_m$ becomes infinitely thin, up to some point when it does not contain any points of $\mathcal{P}\setminus \{x_i\}$. Then, the loop $\gamma^{T^+_l,T^-_l}$ can be continuously deformed in $E$ into $\gamma_{|[T^+_l,T^-_{m}]}\cdot [\gamma_{T^-_{m} },\gamma_{T^+_l}]$, and then into $\gamma'=\gamma_{|[T_l,T_{m}]}\cdot [\gamma_{T_{m} },\gamma_{T_l}]$.
Since $\gamma^{T^+_l,T^-_l}$ is not contractible in $E$, $\gamma'$ is also non-contractible in $X$. In particular, since the open ball $B(x_i,\delta)$ is included on $E$, and therefore contractible, $\gamma'$ cannot remain on $B$.
Thus, either $\gamma_{|[T_l,T_{m}]}$ or $[\gamma_{T_{m} },\gamma_{T_l}]$ is not included on $B(x_i,\delta)$. In both case, there exists $s\in [T_l,T_m]$  such that $\gamma_s\notin B(x_i,\delta)$, so that $U_i<T_m$.
\end{proof}

\subsection{Half-turns}
For $x\in \mathcal{P}$, we now define an integer $\theta_{\frac{1}{2}}(x,\gamma)$, the \emph{number of half-turns of $\gamma$ around $x$}.
Let $d_x^1$ and $d_x^2$ be the two half-lines delimited by $x$ and orthogonal to the vector from $0$ to $x$ (see Figure \ref{fig:dessin} below).
Let $t_0=0$ and $t_1$ be the (possibly infinite) first time $\gamma$ hits $d_x^1$. Times $t_2,t_3,\dots$ are then defined recursively by the formulas
\begin{equation}
t_{2i}=\inf \{t>t_{2i-1}: \gamma_t\in d_x^2\},\ t_{2i+1}=\inf \{t>t_{2i}: \gamma_t\in d_x^1\}.
\label{eq:def:ti}
\tag{*}
\end{equation}
Only finitely many of these times are less than $1$, after which they are all infinite. The integer $\theta_{\frac{1}{2}}(x)$ is then defined as the maximal index $i$ such that $t_i$ is finite, plus $1$. This additional $1$ is to account for the potential winding of $\gamma$ before it reaches $d_x^1$ for the first time. 
 %

B
 In the following, the set $\mathcal{P}$ is ordered by the norm of its elements,
 \[ x<y\iff \|x\|_2<\|y\|_2.\]
 Since we assumed that all the points of $\mathcal{P}$ have different norms, this order is total.

The key idea of this paper is that when the element of $\mathcal{P}$ is endowed with this order, the number of half-turns $\theta_{\frac{1}{2}}(x,\gamma)$ offers an upper bound on the $\|\alpha_x(\pi^{\leq x}( [\gamma]))\|_{l^1}$,
as the following proposition shows, whilst being easy to control when $\gamma$ is a Brownian motion.

\begin{proposition}
  \label{prop:bound12}
  The following inequality holds.
  \[ \|\alpha_x( g^1_x  )\|_{\ell^1} \leq \theta_{\frac{1}{2}}(x,\gamma).\]
\end{proposition}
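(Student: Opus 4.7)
My goal is to bound $m := \|\alpha_x(g_x^1)\|_{\ell^1}$, the total count of $x^{\pm 1}$ letters in the reduced word of $g_x^1\in\mathbb{F}_{\mathcal{P}_x}$, by $\theta_{1/2}(x,\gamma)$. The central geometric input of the chosen ordering of $\mathcal{P}$ by Euclidean norm is that every element of $\mathcal{P}_x\setminus\{x\}$ has norm strictly less than $\|x\|$, hence lies in the open disk $B(0,\|x\|)$. Since the line $L$ through $x$ perpendicular to $[0,x]$ is tangent to $\overline{B(0,\|x\|)}$ precisely at $x$, the set $\mathcal{P}_x\setminus\{x\}$ lies strictly in the open half-plane $L^-$ containing $0$, and the region $\overline{L^+}\setminus\{x\}$ (the closed opposite half-plane with $x$ removed from its boundary) is a simply connected subset of $\mathbb{R}^2\setminus\mathcal{P}_x$.

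My plan is to decompose $\gamma$ according to its position relative to $L$: the excursions of $\gamma$ into the open half-plane $L^+$ are separated by pieces lying in $\overline{L^-}$, and each excursion enters and exits $L^+$ through one of the rays $d_x^1,d_x^2$. I will call an excursion \emph{symmetric} if the entry and exit rays coincide, and \emph{asymmetric} otherwise. I would then argue that symmetric excursions contribute no $x$-letter to $g_x^1$ (they are contractible in $\overline{L^+}\setminus\{x\}$ rel.\ endpoints, so homotopic to a path along the ray itself which never crosses $[0,x]$); that each asymmetric excursion contributes at most one conjugate $x^{\pm 1}$ (being a half-turn in the simply connected region $\overline{L^+}\setminus\{x\}$); and that the $\overline{L^-}$-pieces contribute only letters from $\mathcal{P}_x\setminus\{x\}$, since $x$ is only a boundary puncture of $\overline{L^-}$ and so carries no generator of $\pi_1(\overline{L^-}\setminus\mathcal{P}_x,0)=\mathbb{F}_{\mathcal{P}_x\setminus\{x\}}$. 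Hence $m$ is at most the number of asymmetric excursions. A careful inspection of the ray-hit sequence then bounds this number by $\theta_{1/2}(x,\gamma)$: each asymmetric excursion contributes two consecutive $L$-hits on different rays, giving an alternating subsequence, and by definition the length of the alternating subsequence starting from the first $d_x^1$-hit is $\theta_{1/2}(x,\gamma)-1$.

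The main obstacle will be the free-group bookkeeping in step two. The raw contribution of each asymmetric excursion is a conjugate $w\,x^{\pm 1}\,w^{-1}$ where $w$ is built from the accumulated homotopy class of the preceding $\overline{L^-}$-pieces, and such conjugates can in principle cancel in pairs after several excursions are combined. Ensuring that the surviving $x$-letters remain in correspondence with distinct asymmetric excursions, in a manner compatible with the asymmetric $d_x^1$-vs-$d_x^2$ accounting inherent to $\theta_{1/2}$, will require careful work. The Cayley-walk framework and Lemma \ref{le:Uexists} from the previous subsection should be instrumental here: the latter guarantees that two consecutive $x$-producing steps in the Cayley walk separated by another letter force $\gamma$ to leave the ball $B(x,\delta)$ in between, providing exactly the geometric separation needed to prevent excessive cancellation and to turn the ``at most one $x$-letter per asymmetric excursion'' bound into an injective assignment from the surviving letters to half-turns.
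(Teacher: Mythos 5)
Your geometric core coincides with the paper's: the proof there also works with the line $d_x^1\cup d_x^2$ through $x$ perpendicular to $[0,x]$, uses that every point of $\mathcal{P}_x\setminus\{x\}$ has smaller norm and hence lies strictly on the origin's side, and extracts at most one $x$-letter from each passage through the far half-plane. The gap is exactly at the step you leave open. The ``cancellation among conjugates'' you identify as the main obstacle is not an obstacle at all: for an \emph{upper} bound on $\|\alpha_x(\cdot)\|_{\ell^1}$ cancellation only helps, since this quantity is subadditive under concatenation, $\|\alpha_x(gg')\|_{\ell^1}\leq\|\alpha_x(g)\|_{\ell^1}+\|\alpha_x(g')\|_{\ell^1}$ (free reduction can only delete $x$-syllables or merge two adjacent ones into one of smaller total absolute exponent). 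This is precisely how the paper closes the argument: it writes $g^1_x=g_x^{0,t_1}g_x^{t_1,t_2}\cdots g_x^{t_n,1}$ along the very times $t_i$ defining $\theta_{\frac{1}{2}}(x,\gamma)$ and shows each factor has $\|\alpha_x\|_{\ell^1}\leq 1$. No injective assignment of surviving letters to half-turns is needed, and Lemma \ref{le:Uexists} is the wrong tool for it: it produces a time at distance at least $\delta$ from $x$ between two groups of $x$-letters, which is what Proposition \ref{prop:Si} needs later, but it says nothing about cancellation and is not used in the paper's proof of Proposition \ref{prop:bound12}.

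A second point needs repair before your decomposition even makes sense: the excursion decomposition is not finite in general. A continuous $\gamma$ (a fortiori a Brownian path) can make infinitely many symmetric excursions into $L^+$; only the asymmetric ones are finitely many (the range avoids a ball around $x$, so each asymmetric excursion has diameter bounded below). So you cannot literally write $g^1_x$ as a finite alternating product of excursion classes and $\overline{L^-}$-pieces, and if you instead group the path between consecutive asymmetric excursions, those grouped pieces are \emph{not} contained in $\overline{L^-}$ (they contain all the intervening symmetric excursions), so your claim that they carry no $x$-letter does not apply to them as stated. This is fixable, but the clean fix is the paper's device: cut only at the finitely many alternating hitting times $t_i$, and inside each interval take $s_i$, the last visit to the entry ray before $t_{i+1}$; then $\gamma_{|[t_i,s_i]}$ avoids the opposite ray (so the corresponding loop retracts into a punctured disc missing $x$, contributing no $x$-letter), while $\gamma_{|[s_i,t_{i+1}]}$ stays in one closed half-plane bounded by the line, contributing at most one $x^{\pm1}$. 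Your counting of asymmetric excursions against $\theta_{\frac{1}{2}}$ is fine in spirit (each one forces at least one advance of the alternating hitting sequence), but once the two repairs above are made your argument essentially collapses onto the paper's proof rather than constituting a genuinely different route.
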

\begin{proof}
  Since $g^1_x=g_x^{0,t_1}g_x^{t_1,t_2}\dots
  g_x^{t_{n-1},t_n}g_x^{t_n,1}$ with $n=\theta_{\frac{1}{2}}(x, \gamma)$, and since
  \[\|\alpha_x(gg')\|_{\ell^1}\leq \|\alpha_x(g)\|_{\ell^1}+ \|\alpha_x(g')\|_{\ell^1},\]
it suffices to prove that $\| \alpha_x(g^{t_i,t_{i+1} }_x)\|_{\ell^1}$ cannot be larger than $1$. We advise the reader to convince herself or himself that this holds before looking at our proof.

  We assume that $i$ is odd, the other case being dealt with similarly. Then, $\gamma_{t_i}$ lies in $d^1_x$. Set
  \[
  s_i=\sup\{t<t_{i+1}: \gamma_s\in d^1_x\}.
  \]
  Then, $\gamma_{|[t_i,s_i]}$ takes value  on $\R^2\setminus d^2_x$. Let $B$ be the closed ball of radius $\|x\|-\epsilon$, with $\epsilon$ small enough for $B$ to contain $\mathcal{P}_x\setminus \{x\}$. Then, $B'=B\setminus\mathcal{P} $ is a deformation retract of
  $\R^2\setminus (\mathcal{P}_x \cup  d^2_x)$, so that we can continuously deform the loop $\gamma^{t_i,s_i}$, in $\R^2\setminus (\mathcal{P}_x \cup  d^2_x)$ and hence $\R^2 \setminus \mathcal{P}_x $,  into a loop $\gamma'$ based at $0$ and with values in $B'$.
  It follows that the class of $\gamma_{|[t_i,s_i]}$ in $\pi_1(\mathcal{P}_x)$ and in $\pi_1(\mathcal{P}_x\setminus \{x\})$ are equal, and therefore $\alpha_x(g^{t_i,s_i}_x)=0$.

  Let us now look at $g^{s_i,t_{i+1}}$. The path $\gamma_{[s_i,t_{i+1}] }$ is contained in one of the two closed half-space $H,H'$ delimited by the line $d^1_x\cup d^2_x$. If if lies on $H$, one can repeat the argument above, and deduce that  $\alpha_x(g^{s_i,t_{i+1}}_x)=0$. Otherwise, since there is no point in $\mathcal{P}_x \cap H'$ but $x$,   the class $g^{t_i,s_i}_x$ does not depend on $\gamma_{[s_i,t_{i+1}] }$ but only on its endpoints $\gamma_{s_i}, \gamma_{t_i}$.
  One can therefore replace  $\gamma_{[s_i,t_{i+1}] }$ with any curve with the same endpoints and staying inside $H'$. Replacing it with a curve $\gamma'$ with monotonic angle around $0$, we deduce that
$\| \alpha_x(g^{s_i,t_{i+1} }_x)\|_{\ell^1}=1$, which concludes the proof.
\end{proof}

\begin{figure}
\includegraphics{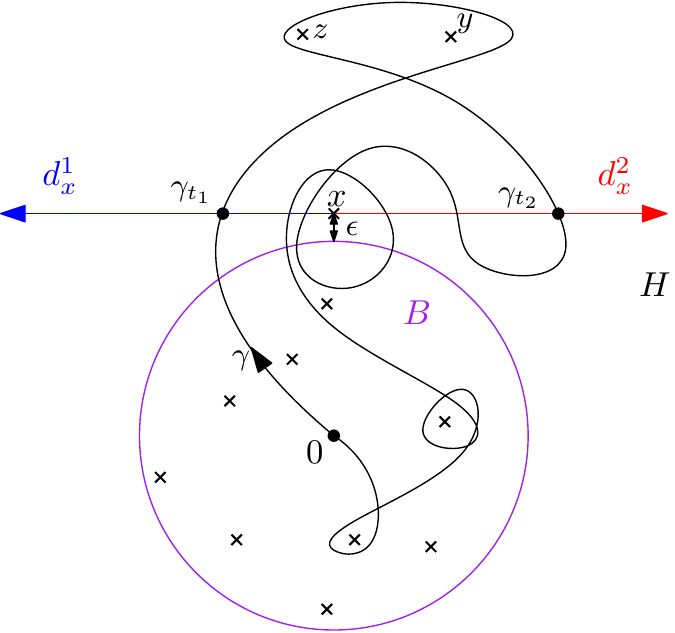}
\caption{\label{fig:dessin} There is no bound on the number of time the letter $x$ appears in $g^{t_i,t_{i+1}}$, but it can appear at most once in $g^{t_i,t_{i+1}}_x$. In this case, the class of $\gamma^{t_1,t_2}$ in $\pi_1(\mathcal{P} )$ is  $w_1xw_2y^{-1}w_3x^{-1}w_4z^{-1}xw_5$, with $x,y,z$ not appearing in the $w_i$. Its class in $\pi_1(\mathcal{P}_x )$
is simply $w'_1xw'_2$.}
\end{figure}


Let $x\in \mathcal{P}$, $g\in \mathbb{F}_{ \mathcal{P}}$. Let us recall that there is a canonical writing of $g$
\[ g=\prod_i a_i(g)^{\alpha_i(g)}, \]
and that $I_x(g)$ is the set of the indices $i$ such that $a_i(g)=x$.

For a positive integer $i$, we now define $S^{(i)}(x,g)$ as the sum of all but the $i-1$ largest values among the $(|\alpha_i(g)|)_{i\in I_x(g)}$. In particular, $S^{(1)}(x,g)= \| \alpha_x(g)\|_{\ell^1}$.
\begin{definition}
Write $I_x(g)$ as $\{i_1<\dots <i_{\# I_x(g)}\}$. For $k\in \{1,\dots ,\#I_x(g)\}$, let $u_k= \alpha_{i_k}(g)$.
Let $\pi$ be a permutation of $\{1,\dots, \# I_x(g)\}$ which is such that $|u_{\pi(1)}|\geq |u_{\pi(2)}|\geq \dots |u_{\pi(\# I_x(g))}|$. Then, we set
\[
  \beta_k(x,g)=u_{\pi(k)} \quad \mbox{and} \quad  S^{(i)}(x,g)=\sum_{k=i}^{\# I_x(g)} |u_{\pi(k)}|=\sum_{k=1}^{\# I_x(g)}|\alpha_{i_k}(g)|-\sum_{k=1}^{i-1} |\beta_i(x,g)|.
\]
\end{definition}
\begin{remark}
Because of the absolute values, the sequence $\beta_i(x,g)$ might not uniquely determined by $g$. One can for example enforce the condition \[ |\beta_i(x,g)|=|\beta_j(x,g)|, \beta_i(g)\neq\beta_j(x,g), i<j \implies \beta_i(x,g)>\beta_j(x,g) \]
in order to get unicity. The peculiar choice we make does not play a specific role in the following.
\end{remark}

Before we actually study the case when $\gamma$ is a Brownian motion, let us conclude this section with the following elementary fact.
\begin{lemma}
\label{le:six3}
Let $u_1,\dots, u_k$ be a finite sequence of positive real numbers and $i$ a positive integer.

Let $\pi$ be a permutation of $\{1,\dots, k\}$ which is such that $u_{\pi(1)}\geq u_{\pi(2)}\geq \dots u_{\pi(k)}$, and let \[S^{(i)}=\sum_{n=i}^k u_{\pi(i)},\]
that is $S^{(i)}$ is the sum of all but the $i-1$ largest elements of the sequence.

Then, there exists
$j_1=0<j_2<\dots< j_{i+1}=k$ such that for all
$l\in \{1,\dots, i\}$,
\[\sum_{n=j_l+1}^{j_{l+1}} u_n \geq \left\lfloor \frac{ S^{(i)}}{i} \right\rfloor.\]
\end{lemma}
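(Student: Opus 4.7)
The plan is to prove the lemma by a greedy construction. Set $M = \lfloor S^{(i)}/i \rfloor$ and define the indices recursively: $j_1 = 0$, and for $l \geq 1$, let $j_{l+1}$ be the smallest index in $\{j_l + 1, \ldots, k\}$ for which $\sum_{n = j_l + 1}^{j_{l+1}} u_n \geq M$, provided such an index exists. The entire proof reduces to showing that this procedure produces at least $i$ successive blocks, since one can then set $j_{i+1} := k$ (which only enlarges the final block and preserves the inequality) and obtain the required partition.

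Suppose for contradiction that the construction stops after producing only $l - 1$ complete blocks for some $l \leq i$, so that $\sum_{n = j_l + 1}^{k} u_n < M$. By the minimality defining each $j_m$ for $m = 2, \ldots, l$, one has $\sum_{n = j_{m-1} + 1}^{j_m - 1} u_n < M$, hence $\sum_{n = j_{m-1} + 1}^{j_m} u_n < M + u_{j_m}$. Summing these $l - 1$ block bounds together with the leftover bound gives
\[
U := \sum_{n=1}^k u_n < (l-1) M + \sum_{m=2}^l u_{j_m} + M = l M + \sum_{m=2}^l u_{j_m}.
\]

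The crucial observation is that $\sum_{m=2}^l u_{j_m}$ is a sum of $l - 1 \leq i - 1$ values at distinct indices of the sequence (the $j_m$ being strictly increasing by construction), and is therefore bounded above by the sum of the $i - 1$ largest terms of $(u_n)$, which is exactly $U - S^{(i)}$ by the definition of $S^{(i)}$. Substituting gives $U < l M + U - S^{(i)}$, that is $S^{(i)} < l M$. On the other hand $l M \leq i M \leq S^{(i)}$ since $M = \lfloor S^{(i)}/i \rfloor$, a contradiction.

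There is no genuinely hard step here; the only thing to spot is the bound $\sum_{m=2}^l u_{j_m} \leq U - S^{(i)}$, which encapsulates the entire role of the definition of $S^{(i)}$ as the sum excluding the top $i - 1$ terms. The statement implicitly requires $k \geq i$ for a strictly increasing chain $0 < j_2 < \cdots < j_{i+1} = k$ to exist; under that assumption the edge case $M = 0$ is handled trivially by \emph{any} partition into $i$ non-empty consecutive blocks.
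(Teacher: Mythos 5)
Your greedy construction is correct: the minimality bound $\sum_{n=j_{m-1}+1}^{j_m-1}u_n<M$, the observation that $\sum_{m=2}^{l}u_{j_m}$ runs over at most $i-1$ distinct indices and is thus at most $U-S^{(i)}$, and the chain $S^{(i)}<lM\leq iM\leq S^{(i)}$ together give a clean contradiction, and your remarks on the implicit hypothesis $k\geq i$ and the trivial case $\lfloor S^{(i)}/i\rfloor=0$ are apt. There is nothing to compare against: the paper explicitly leaves the proof of this lemma to the reader, so your argument simply supplies the omitted proof.
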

The proof is left to the reader.
%

%
%
%
%

Finally, one can deduce the following proposition.
\begin{proposition}
\label{prop:Si}

Let $x\in \mathcal{P}$, and $i,N$ be two positive integers. Assume that $S^{(i)}(x,g^1_x)\geq N$.

Then, there exists $0<u_1<\dots<u_i= 1$ such that for all $k\in \{1,\dots, i-1\}$,
\[\theta_{\frac{1}{2}}(x,\gamma^{u_k,u_{k+1}})\geq \left\lfloor \tfrac{N}{i} \right\rfloor \quad \mbox{ and }\quad
d(\gamma_{u_{k}},x)\geq \delta.\]
\end{proposition}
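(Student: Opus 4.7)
My plan is to realize the partition of $x$-blocks provided by Lemma \ref{le:six3} as a partition of the time interval $[0,1]$, using the geodesic walk in the Cayley graph of $\mathbb{F}_{\mathcal{P}_x}$. Write $q = \# I_x(g^1_x)$; since $S^{(i)}(x, g^1_x) \geq N \geq 1$ forces $q \geq i$, Lemma \ref{le:six3} applied to the positive sequence $(|\alpha_{i_k}(g^1_x)|)_{k=1}^{q}$ produces cut-points $0 = j_1 < j_2 < \dots < j_{i+1} = q$ with $\sum_{k = j_l+1}^{j_{l+1}} |\alpha_{i_k}(g^1_x)| \geq \lfloor N/i \rfloor$ for each $l \in \{1, \dots, i\}$.

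Let $h'_0 = 1, h'_1, \dots, h'_M = g^1_x$ denote the geodesic walk in the Cayley graph of $\mathbb{F}_{\mathcal{P}_x}$, with hitting times $0 = T'_0 < T'_1 < \dots < T'_M$ (strict monotonicity is by the tree argument recalled in the text for the $T_k$). Let $P_l \in \{1, \dots, M\}$ be the step at which the $l$-th $x$-block of $g^1_x$ ends. For $l \in \{1, \dots, i-1\}$ I set
\[
u_l = \inf\bigl\{t \geq T'_{P_{j_{l+1}}} : d(\gamma_t, x) \geq \delta\bigr\}, \qquad u_i = 1.
\]
The condition $d(\gamma_{u_l}, x) \geq \delta$ then follows from closedness of $\{y : d(y, x) \geq \delta\}$. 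The ordering $u_l < u_{l+1}$ follows from Lemma \ref{le:Uexists} applied to $\mathcal{P}_x$ with threshold $\delta(\mathcal{P}_x) \geq \delta$ (the proof of the lemma carries over verbatim): the steps $P_{j_{l+1}}$ and $P_{j_{l+1}+1}$ both carry the letter $x$ and are separated by at least one non-$x$ step by maximality of the $x$-blocks, so $\gamma$ must leave $B(x, \delta)$ strictly before $T'_{P_{j_{l+1}+1}}$, giving $u_l < T'_{P_{j_{l+1}+1}} \leq T'_{P_{j_{l+2}}} \leq u_{l+1}$.

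It then remains to prove $\theta_{\frac{1}{2}}(x, \gamma^{u_l, u_{l+1}}) \geq \lfloor N/i \rfloor$. By Proposition \ref{prop:bound12} applied to the loop $\gamma^{u_l, u_{l+1}}$, this reduces to $\|\alpha_x(g_x^{u_l, u_{l+1}})\|_{\ell^1} \geq \lfloor N/i \rfloor$. Writing $g_x^{u_l, u_{l+1}} = (g_x^{u_l})^{-1} g_x^{u_{l+1}}$ and unfolding along the geodesic walk, the resulting word contains, up to modifications confined to its two extreme $x$-blocks, the geodesic segment $h'^{-1}_{P_{j_{l+1}}} h'_{P_{j_{l+2}}}$, which displays the blocks $j_{l+1}+1, \dots, j_{l+2}$ intact and contributes exactly $\sum_{k=j_{l+1}+1}^{j_{l+2}} |\alpha_{i_k}(g^1_x)|$ to the $\ell^1$-norm of the $x$-exponent sequence. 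A block-wise triangle inequality of the form $|a+b| \geq |a| - |b|$ applied to the two endpoint $x$-blocks is then enough to conclude.

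The main obstacle is making this last paragraph rigorous, i.e., showing that the excursions of $\gamma$ in $B(x, \delta)$ during $[T'_{P_{j_{l+1}}}, u_l]$ and $[T'_{P_{j_{l+2}}}, u_{l+1}]$ only alter the extreme $x$-blocks of $g_x^{u_l, u_{l+1}}$. The subtlety is that for $y \in \mathcal{P}_x \setminus \{x\}$ whose direction at the origin is very close to that of $x$, the half-line $h_y$ can clip through $B(x, \delta)$, so a priori $g_x^t$ can pick up letters other than $x^{\pm 1}$ during an excursion. I would handle this either by working with a smaller local radius $\delta' \leq \delta$ around $x$ chosen to avoid all such $h_y$ and then verifying through a continuity argument at the path level that the corresponding $u_l$ still realizes $d(\gamma_{u_l}, x) \geq \delta$, or by arguing directly that the net boundary perturbation, being conjugate to a pure $x$-power by the straight-line trip from $0$ to $\gamma_{u_l}$, can only sandwich the geodesic word $h'^{-1}_{P_{j_{l+1}}} h'_{P_{j_{l+2}}}$ at its ends without cancelling its interior $x$-blocks.
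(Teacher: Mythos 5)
Your skeleton coincides with the paper's: Lemma \ref{le:six3} to cut the $x$-blocks of $g^1_x$ into $i$ groups of weight at least $\lfloor \tfrac{N}{i}\rfloor$, the prefix hitting times of the geodesic walk, Proposition \ref{prop:bound12} to convert an $\ell^1$-bound on $x$-exponents into a half-turn bound, and Lemma \ref{le:Uexists} to produce the times at distance $\geq\delta$ from $x$. The gap is exactly the step you flag as ``the main obstacle'', and it is a genuine one: you need a lower bound on $\|\alpha_x(g_x^{u_l,u_{l+1}})\|_{\ell^1}$, and since $g_x^{u_l,u_{l+1}}=w_l^{-1}\,(h'_{P_{j_{l+1}}})^{-1}h'_{P_{j_{l+2}}}\,w_{l+1}$ with $w_l=g_x^{T'_{P_{j_{l+1}}},u_l}$, you must rule out cancellation between the excursion words and the interior of the middle subword. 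These excursion words are not innocuous: while $\gamma$ stays in $B(x,\delta)$ it can wind around $x$ arbitrarily many times in either direction, and the moving chord $[\gamma_t,0]$ can sweep across the cuts of other points of $\mathcal{P}_x$, so $w_l$ is a general word --- in particular it need not be conjugate to a pure power of $x$, contrary to what your second suggested repair assumes, and your first repair (shrinking the radius) does not touch the cancellation issue at all. Moreover, even granting that only the two extreme $x$-blocks are altered, the group-$(l+1)$ weight is only $\geq\lfloor\tfrac{N}{i}\rfloor$ with no slack, so the ``$|a+b|\geq|a|-|b|$'' correction at the endpoints can push the total below the threshold; nothing in your sketch excludes this quantitative loss.

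The paper avoids word-level control at the exit times altogether, by one observation you are missing: $\theta_{\frac{1}{2}}(x,\gamma^{s,t})$ is a count of alternating crossings of the two half-lines $d^1_x, d^2_x$, hence it is monotone under enlarging the time interval --- half-turns accumulate and never cancel, unlike exponents in the free group. Concretely, the paper first produces times $t_1<s_1<u_1<t_2<\dots<s_i$, where $t_l,s_l$ are prefix hitting times bracketing the $l$-th group, so that $g_x^{t_l,s_l}$ is exactly the corresponding reduced subword and $\|\alpha_x(g_x^{t_l,s_l})\|_{\ell^1}\geq\lfloor\tfrac{N}{i}\rfloor$ holds with no cancellation to discuss; Proposition \ref{prop:bound12} then gives $\theta_{\frac{1}{2}}(x,\gamma^{t_l,s_l})\geq\lfloor\tfrac{N}{i}\rfloor$, Lemma \ref{le:Uexists} supplies $u_l\in(s_l,t_{l+1})$ with $d(\gamma_{u_l},x)\geq\delta$, and monotonicity transfers the half-turn bound from $[t_{k+1},s_{k+1}]$ to the larger interval $[u_k,u_{k+1}]$. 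No information about $g_x^{u_k,u_{k+1}}$ as a group element is ever needed. To close your gap, replace your last paragraph by this monotonicity argument, and choose the $u_l$ in the gap between consecutive groups (as Lemma \ref{le:Uexists} permits) rather than as the first exit after the end of a group, so that the bracketing prefix interval of each group is genuinely contained in $[u_l,u_{l+1}]$; with your current definition $u_l$ may fall inside the traversal of the next group's first block, which would spoil that inclusion.
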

\begin{proof}
We will show that there exists $0<t_1<s_1<u_1<t_2<\dots<u_{i-1}<t_i<s_i=1$ such that
\begin{itemize}
\item For all $k\in \{1,\dots, i\}$,
$\theta_{\frac{1}{2}}(x,\gamma^{t_k,s_{k}})\geq \left\lfloor \tfrac{N}{i} \right\rfloor$,
\item For all $k\in \{1,\dots, i-1\}$,
$d(\gamma_{u_{k}},x)\geq \delta$.
\end{itemize}
Since $\theta_{\frac{1}{2}}(x,\gamma^{t,s})$ is monotonic in $s$ and $t$, the two properties then still holds with  $s_k$ replaced by $u_k$ and $t_k$ replaced by $u_{k-1}$, which allows to conclude.

Let $i_1(x),\dots , i_{\#I_x(g^1_x)}(x)$ be the elements of $I_x(g^1_x)$

We first apply Lemma \ref{le:six3} to find $j_1=0<\dots<j_{i+1}=\#I_x(g^1_x) $ such that for all $l\in \{1,\dots, i\}$,
\[
\sum_{n=j_l+1}^{j_{l+1}} |\alpha_{i_n(x)}(g^1_x )|\geq \left\lfloor \tfrac{N}{i} \right\rfloor.
\]

For $l\in \{1,\dots, i+1\}$, let
\[ k_l= \sum_{i=1}^{j_l} |\alpha_i(g^1_x)|, \quad k'_l= \sum_{i=1}^{j_l+1} |\alpha_i(g^1_x)|
\]
so that
\[
g_x^{T_{k_l}}=\prod_{i\leq j_l} a_i(g^1_x)^{\alpha_i(g^1_x)}, \quad
g_x^{T_{k'_l}}=\prod_{i\leq j_l+1} a_i(g^1_x)^{\alpha_i(g^1_x)}.
\]
Then $S^1(x, g_x^{T_{k'_l}, T_{k_{l+1}}})=\sum_{n=j_l+1}^{j_{l+1}} |\alpha_{i_n(x)}(g^1_x)|\geq \left\lfloor \tfrac{N}{i} \right\rfloor $.

We apply Proposition \ref{prop:bound12} to deduce that
$\theta_{\frac{1}{2}}(x, \gamma^{T_{k'_l},T_{k_{l+1}}})\geq \left\lfloor \tfrac{N}{i} \right\rfloor $.

Thus, we use $t_l=T_{k'_l}$ and $s_l=T_{k_{l+1}}$.

As for $u_l$, its existence is ensured by Lemma \ref{le:Uexists} using the fact that $a_{i_n(x)+1}(g^1_x)\neq x$ for all $n$, hence in particular for $n=j_l$.
\end{proof}

\section{The case of Brownian paths}
\label{sec:brown}
\subsection{Minimal spacing in a Poisson set}
It is now time to apply some probability theory. First, we need to get some bound on $\delta$ the minimal distance between two points in $\mathcal{P}_K$. Since $\mathcal{P}_K$ is infinite, $\delta(\mathcal{P}_K)$ is actually $0$. Fortunately, one can freely replace $\mathcal{P}_K$ with some large ball containing the Brownian motion $X$.

With this in mind, we set $F_R$ the event
\[ F_R: \|X\|\leq R,\]
where $\|X\|$ is the maximum of $t\mapsto \|X_t\|_2$.
and
\[\mathcal{P}_K^R=\mathcal{P}_K\cap B(0,R).\]

For a given $R$, the cardinal of $\mathcal{P}_K^R$ is of order $\pi R^2 K$, and we will see that $\delta(\mathcal{P}_K^R)$ is of order at least $K^{-1}$, so we also set
\[
E_R: \# \mathcal{P}_K^R \leq 4R K\log(K) \mbox{ and } \delta(\mathcal{P}_K^R)\geq (K\log(K))^{-1}.
\]
We could have replace $K\log(K)$ with any function $f$ such that $K=o(f(K))$ and $f(K)=K^{1+o(1)}$, the point here is that the event $E_R$ has probability arbitrary close to $1$ when $K$ goes to infinity, as the following lemma shows.

%
%
\begin{lemma}
\label{lemma:delta_small}
\[\mathbb{P}\big(\delta(\mathcal{P}^R_K) \leq (K \log(K))^{-1} \big)\underset{K\to +\infty}\to 0.\]
\end{lemma}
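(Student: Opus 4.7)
The plan is to bound the probability by a first-moment estimate, splitting the event $\{\delta(\mathcal{P}^R_K) \leq (K\log K)^{-1}\}$ according to whether a close pair of points in $\mathcal{P}^R_K$ exists, or a single point lies too close to the origin. Write $r_K = (K\log K)^{-1}$ and decompose
\[
\{\delta(\mathcal{P}^R_K) \leq r_K\} \subseteq A_K \cup B_K,
\]
where $A_K = \{\exists\, x\neq y \text{ in } \mathcal{P}^R_K,\ \|x-y\|\leq r_K\}$ and $B_K = \{\exists\, x\in \mathcal{P}^R_K,\ \|x\|\leq r_K\}$. By Markov's inequality it will be enough to bound the expectations of the corresponding counting variables.

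For $B_K$, recall that $\mathcal{P}_K$ is a Poisson process of intensity $K$ on $\R^2$, so
\[
\mathbb{P}(B_K) \leq \mathbb{E}\big[\#\{x\in \mathcal{P}_K : \|x\| \leq r_K\}\big] = K\pi r_K^2 = \frac{\pi}{K(\log K)^2} \underset{K\to\infty}\longrightarrow 0.
\]
For $A_K$, I would apply the Campbell--Mecke formula to the second factorial moment of the Poisson process, which gives
\[
\mathbb{E}\big[\#\{(x,y)\in(\mathcal{P}^R_K)^2 : x\neq y,\ \|x-y\|\leq r_K\}\big] = K^2 \int_{B(0,R)^2} \mathbf{1}_{\|x-y\|\leq r_K}\,\mathrm{d}x\,\mathrm{d}y.
\]
Bounding the inner integral by $\pi r_K^2$ uniformly in $x$, the right-hand side is at most $K^2\cdot \pi R^2 \cdot \pi r_K^2 = \pi^2 R^2/(\log K)^2$, which again tends to $0$. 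Hence $\mathbb{P}(A_K)\to 0$ by Markov, and combining the two estimates finishes the proof.

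I do not expect any serious obstacle: the computation is a routine first-moment bound, and the choice $r_K = (K\log K)^{-1}$ is comfortably below the natural scale $K^{-1/2}$ at which pairs of Poisson points start to appear. The only minor subtlety is remembering that $\delta$ also involves the origin, which is why the separate estimate on $B_K$ is needed.
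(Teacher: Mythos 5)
Your proof is correct, but it takes a different route from the paper. You bound the expected number of close pairs directly via the multivariate Mecke/Campbell formula for the second factorial moment of the Poisson process, getting $K^2\pi^2 R^2 r_K^2 = \pi^2R^2/(\log K)^2 \to 0$, plus a separate first-moment bound for a point near the origin (which is indeed needed, since $\delta$ is defined with $0$ adjoined to $\mathcal{P}^R_K$). The paper instead discretizes: it covers $B(0,R)$ by $O\big((K\log K)^2\big)$ balls of radius $(K\log K)^{-1}$, notes that any close pair must fall in a single doubled ball $B_i'$, bounds the probability that a given $B_i'$ contains at least two Poisson points by $O\big(K^{-2}(\log K)^{-4}\big)$, and concludes by a union bound, handling the origin separately as you do. Both arguments give the same rate $O\big((\log K)^{-2}\big)$; your Mecke computation is shorter and avoids the covering construction and radius-doubling, while the paper's version uses only the Poisson distribution of counts in fixed small balls and a union bound, so it is more elementary in the tools invoked. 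One small correction to your closing heuristic: the critical scale below which no close pair exists is $K^{-1}$ (expected pair count $\sim K^2 r^2$), not $K^{-1/2}$ (the latter is the typical nearest-neighbour spacing); $(K\log K)^{-1}$ sits only a logarithm below the critical scale, which is precisely why the bound decays only like $(\log K)^{-2}$ rather than polynomially. This does not affect the validity of your computation.
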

\begin{proof}

We cover $B(0,R)$ with $\lfloor C (K\log(K))^{2} \rfloor$ balls $B_1,\dots$ of radius $(K\log(K))^{-1}$ (with $C$ a constant depending only on $R$). This can be done for example by choosing the points on a properly scaled grid. Let then $B'_1,\dots, $ be the balls with the same centers $x_1,\dots$ and radii $2(K\log(K))^{-1}$. Then, for any two points $x,y\in B(0,R)$ at distance less than $(K\log(K))^{-1}$ from each other,
there exists some $i$ such that both $x$ and $y$ lies on $B'_i$. One can take, for example, $i$ such that $x\in B_i$, since in that case
\[d(y,x_i)\leq d(y,x)+d(x,x_i)\leq 2   (K\log(K))^{-1}.\]
Thus, the considered event is included on the event
\[ \exists i\in \{1,\dots , \lfloor C (K\log(K))^{2} \rfloor\}: \# (\mathcal{P}_K\cap B'_i)\geq 2 \mbox{ or } \exists x\in \mathcal{P}_K\cap B(0,(K\log(K))^{-1}).\]
For a given $i$, $\#   (\mathcal{P}_K\cap B'_i)$ is a Poisson variable with intensity $K\times (K\log(K))^{-2}=K^{-1}\log(K)^{-2} $, the probability that it is at least $2$ is of order $K^{-2}\log(K)^{-4}$, so that
\begin{align*}
\mathbb{P}\Big( &\exists i\in \{1,\dots , \lfloor CR (K\log(K))^{2} \rfloor\}: \# (\mathcal{P}_K\cap B'_i)\geq 2\Big)\\
&\leq C'  (K\log(K))^{2} K^{-2}\log(K)^{-4}=C' \log(K)^{-2} \underset{K\to \infty}{\to} 0.
\end{align*}

It is left to the reader to show that the probability of the event $\exists x\in \mathcal{P}_K\cap B(0, (K\log(K))^{-1})$ is small as well.
\end{proof}


\subsection{Controlling the $\beta_i$}
It is now time to use the fact that our path $X$ is Brownian.
Let us recall that we denote by $\theta(x)$ the winding number of $X$ around $x$, which counts algebraically the number of turns of $X$ around $x$, whilst $\theta_{\frac{1}{2}}(x)$ counts non-algebraically the number of half-turns of $X$ around $x$. We define $\theta_{s,t}(x)$ as the winding number of $X_{|[s,t]}$ around $x$.
Working with a Brownian motion allows for two things. From one side, it allows to easily relates $\theta$ and $\theta_{\frac{1}{2}}$, as we will show on the next subsection. From the other side, it allows to use some already established estimation about $\theta$.

We will use the following idea. For the Brownian motion to wind a large amount of time around a given point $z$, the Brownian motion has to go extremely close to $z$. The windings are then mostly due to the small fraction of time it spend close to $z$. Since it is highly unlikely that the Brownian motion goes close to $z$ twice, the windings are actually mostly due to a very small \emph{interval} of time.

In particular, if we wait for the Brownian motion to wind a lot around $z$, and then to go far\footnote{In our case, `far' is actually rather close, but not `extremely close'.} from $z$, it is unlikely that it will winds a lot around $z$ after that. The following proposition gives a way to quantize this probability.
\begin{lemma}
\label{le:ImprovedZhan}
Let $n$ be  positive integer. Then, there exists a constant $C$ such that for all $r\leq 1$, for all $z\in \R^2$ with $\|z\|\geq r$,
for all positive integers $N_1,\dots, N_n$,
\begin{align*}
\mathbb{P}\Big(&\exists s_1<t_1<u_1<s_2<\dots<u_{n-1}<s_n<t_n<1: \\
&\forall i\in \{1,\dots,n\}, |\theta_{s_i,t_i}(x)|\geq N_i,
\forall i\in \{1,\dots,n-1\}, d(X_{u_i},x)\geq r\Big) \leq C \frac{ \log(r^{-1} \max_{i}\log(N_i) )^n}{N_1\dots N_n}.
\end{align*}
\end{lemma}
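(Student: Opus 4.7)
The plan is to reduce Lemma~\ref{le:ImprovedZhan} to its $n=1$ case by iterating the strong Markov property. First I will establish (or recall, from the author's previous work on the Cauchy behaviour of single-point windings) the \emph{single-winding estimate}: for a planar Brownian motion starting at distance $\geq r$ from $z$,
\[
\mathbb{P}\bigl(\exists \, s < t \leq 1 : |\theta_{s,t}(z)| \geq N\bigr) \leq C \, \frac{\log(r^{-1} \log N)}{N}.
\]
This is essentially Zhan's winding estimate (whence the name ``improved Zhan''), sharpened by a careful optimisation over the scales at which the Brownian motion approaches $z$. (I note in passing that the statement of the lemma appears to contain a typo: the point around which the winding is counted should be $z$, not $x$.)

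With this in hand, I introduce stopping times $\rho_0 = 0$ and, recursively for $i \geq 1$,
\[
\tau_i = \inf\bigl\{t \geq \rho_{i-1} : |\theta_{\rho_{i-1},\, t}(z)| \geq N_i\bigr\}, \qquad
\rho_i = \inf\bigl\{t \geq \tau_i : \|X_t - z\| \geq r\bigr\}.
\]
A short induction on $i$ shows that, on the event of the lemma with witnesses $(s_i, t_i, u_i)$, one has $\tau_i \leq t_i$ and $\rho_i \leq u_i$ for $i < n$, and $\tau_n \leq t_n < 1$. The key observation is the ordering $\rho_{i-1} \leq u_{i-1} < s_i$ prescribed by the event itself, which guarantees that the winding accumulated on $[s_i, t_i]$ happens entirely \emph{after} the stopping time $\rho_{i-1}$, hence forces $\tau_i \leq t_i$. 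It therefore suffices to bound $\mathbb{P}(\tau_n < 1)$.

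To do this, I apply the strong Markov property at each $\rho_{i-1}$. Given $\mathcal{F}_{\rho_{i-1}}$, on $\{\rho_{i-1} < 1\}$ the shifted process $(X_{\rho_{i-1}+s})_{s \geq 0}$ is a Brownian motion starting at distance $\geq r$ from $z$, and it is run for a time $1 - \rho_{i-1} \leq 1$. The single-winding estimate therefore yields
\[
\mathbb{P}\bigl(\tau_i < 1 \bigm| \mathcal{F}_{\rho_{i-1}}\bigr)\, \mathbf{1}_{\rho_{i-1} < 1} \leq C \, \frac{\log(r^{-1} \log N_i)}{N_i}.
\]
Combining with the trivial implication $\rho_i < 1 \Longrightarrow \tau_i < 1$ and iterating for $i = 1, \ldots, n$ yields
\[
\mathbb{P}(\tau_n < 1) \leq C^n \prod_{i=1}^n \frac{\log(r^{-1} \log N_i)}{N_i} \leq C^n \, \frac{\log(r^{-1} \max_i \log N_i)^n}{N_1 \cdots N_n},
\]
which is the desired bound.

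The main obstacle lies in the single-winding estimate itself. Standard Spitzer-type asymptotics control the winding at a fixed time, whereas here the supremum is taken over all sub-intervals $[s,t]\subset[0,1]$, which allows the Brownian motion to start a winding interval just after having approached $z$ arbitrarily closely. Controlling this requires pairing the escape scale $e^{-M}$ reached by the path near $z$ with the target winding $N$: hitting scale $e^{-M}$ has probability $\lesssim \log(r^{-1})/M$, and once so close winding $\geq N$ in time $1$ has probability $\lesssim M/N$, leading to $\log(r^{-1})/N$ modulo combinatorial factors from summing over $O(\log N)$ relevant scales. It is precisely this union bound over scales that produces the double logarithm $\log(r^{-1} \log N)$ rather than a naive $\log(r^{-1}) + \log N$, and is where essentially all of the probabilistic work of the lemma is concentrated.
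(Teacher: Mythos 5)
Your overall strategy is the same as the paper's: take the single-interval estimate (Zhan's bound on the maximal winding, in its ``$\sup_{s<t\le 1}$'' form), then iterate the strong Markov property at the successive times where the path returns to distance $r$ from $z$, picking up one factor $C\log(r^{-1}\log N_i)/N_i$ per step. The outsourcing of the one-interval estimate to Zhan is also what the paper does (your closing paragraph is only a heuristic for it, but the paper likewise does not reprove it), so that is not where the problem lies.

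There is, however, a genuine flaw in your reduction step. You define $\tau_i=\inf\{t\ge\rho_{i-1}:|\theta_{\rho_{i-1},t}(z)|\ge N_i\}$, i.e.\ you measure the winding \emph{from the time $\rho_{i-1}$ onwards}, and you claim that the existence of witnesses with $\rho_{i-1}\le u_{i-1}<s_i<t_i$ and $|\theta_{s_i,t_i}(z)|\ge N_i$ forces $\tau_i\le t_i$. This is false as stated: by additivity of the winding, $\theta_{\rho_{i-1},t_i}=\theta_{\rho_{i-1},s_i}+\theta_{s_i,t_i}$, and the first term can partially cancel the second, so that $|\theta_{\rho_{i-1},t}(z)|$ may never exceed $N_i/2$ on $[\rho_{i-1},t_i]$ even though the increment over $[s_i,t_i]$ is at least $N_i$ (e.g.\ $\theta_{\rho_{i-1},s_i}\approx -N_i/2$, $\theta_{s_i,t_i}\approx N_i$). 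Two standard repairs are available. Either keep your $\tau_i$ but with threshold $N_i/2$ (the triangle inequality gives $\max(|\theta_{\rho_{i-1},s_i}|,|\theta_{\rho_{i-1},t_i}|)\ge N_i/2$, and the resulting factor $2^n$ is harmless since $C$ is allowed to depend on $n$); or, better, define the stopping times as the paper does, $T_{i}=\inf\{t>\rho_{i-1}:\exists s\in[\rho_{i-1},t]\ \text{s.t.}\ |\theta_{s,t}(z)|\ge N_i\}$, which makes the implication ``event $\Rightarrow T_n<1$'' immediate and is exactly matched by the $\sup_{s<t}$ form of the single-interval estimate that you already quote. With either fix your argument coincides with the paper's proof.
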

\begin{proof}
  At page 117 of \cite{zhan}, we can found an inequality on the maximal possible value of the winding function $\theta(x,X_{|[0,t]})$ as $t$ varies. With our notation and a very small amount of work, we deduce from this inequality that, provided $r^{-2}\log(N)$ is large, which is our case,
  \begin{equation}
  \label{eq:doubleZhan}
  \sup_{x: \|x\|\geq r} \mathbb{P}( \sup_{t\in [0,1]} \theta(x,X_{|[0,t]})\geq N   )\leq C \frac{\log(r^{-2} \log(N)) }{N}.
  \end{equation}


  We directly deduce that
  \[
  \sup_{x: \|x\|\geq r} \mathbb{P}( \sup_{s<t\in [0,1]} \theta(x,X_{|[s,t]})\geq N   )\leq C' \frac{\log(r^{-2} \log(N))}{N}.
  \]

  It then suffices to use the Markov property at the stopping times
  \[T_0=0,\quad T_{i+1}=\inf \{t>U_i:\exists s\in [U_i,t]\ s.t.\ |\theta_{s,t}(x)|\geq N_i\},\quad U_i=\inf\{u>T_i: d(X_{u},x)\geq r\}.\]
  For the event we study to happens, the times $T_1,\dots,T_n$ must all be finite. We conclude by using the inequality \eqref{eq:doubleZhan} $n$ times in a row.
\end{proof}

\begin{corollary}
\label{coro:improvedZhan}
Let $n$ be a positive integer and $R,\epsilon>0$. Then, there exists a constant $C$ such that for all $N_1,\dots, N_n$,
\begin{align*}
\mathbb{P}\Big(&  E_R \cap  \exists x\in \mathcal{P}_K^R , \exists s_1<t_1<u_1<s_2<\dots<u_{n-1}<s_n<t_n<1: \forall i\in \{1,\dots,n\},\\
& |\theta_{s_i,t_i}(x)|\geq N_i,
\forall i\in \{1,\dots,n-1\}, d(X_{u_i},x)\geq \delta \Big) \leq C \frac{ K \log(K)^{n+1} \max_i \log(\log(N_i))} {N_1\dots N_n}.
\end{align*}
\end{corollary}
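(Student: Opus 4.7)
The plan is a straightforward conditioning-plus-union-bound reduction to Lemma \ref{le:ImprovedZhan}. Since $X$ is independent of $\mathcal{P}_K^G$, and hence of $\mathcal{P}_K$, and the event $E_R$ is $\sigma(\mathcal{P}_K)$-measurable, I will freeze $\mathcal{P}_K$ and work conditionally. Under this conditioning, $X$ remains a standard planar Brownian motion and $\mathcal{P}_K^R$ is a deterministic finite set. On $E_R$, we have the two deterministic bounds $\#\mathcal{P}_K^R \leq 4RK\log K$ and $\delta := \delta(\mathcal{P}_K^R) \geq (K\log K)^{-1}$. Because the origin enters the definition of $\delta$, every $x \in \mathcal{P}_K^R$ satisfies $\|x\| \geq \delta \geq (K\log K)^{-1}$, which is exactly the lower bound on $\|x\|$ required by Lemma \ref{le:ImprovedZhan}.

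For each fixed $x \in \mathcal{P}_K^R$, the event described in the corollary (at parameter $\delta$) is contained in the event at parameter $r = (K\log K)^{-1}$: the constraint $d(X_{u_i}, x) \geq \delta$ is more restrictive than $d(X_{u_i}, x) \geq r$ since $\delta \geq r$. Applying Lemma \ref{le:ImprovedZhan} pointwise with this $r$ (and $z = x$) yields
\[
\mathbb{P}^X(\text{bad event for } x) \leq C \frac{\log\!\big(K\log K \cdot \max_i \log N_i\big)^n}{N_1 \cdots N_n}.
\]

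A union bound over the (at most $4RK\log K$) points $x \in \mathcal{P}_K^R$, and then integration of the $\mathcal{P}_K$-measurable indicator $\mathbf{1}_{E_R}$ against $\mathbb{P}$, gives
\[
\mathbb{P}\big(E_R \cap \{\exists x \in \mathcal{P}_K^R\text{ bad}\}\big) \leq 4RK\log K \cdot C\, \frac{\log\!\big(K\log K \cdot \max_i \log N_i\big)^n}{N_1 \cdots N_n}.
\]

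The remaining step is purely algebraic: splitting $\log\!\big(K\log K \cdot \max_i \log N_i\big)$ as $\log(K\log K) + \log\log\max_i N_i$ and separating into the regimes $\log\log\max_i N_i \lessgtr \log K$, this quantity is bounded by $C\log(K)\cdot\max(1,\log\log\max_i N_i)$, and taking the $n$th power and combining with the prefactor $K\log K$ yields the stated bound $C K \log(K)^{n+1} \max_i \log\log N_i / (N_1 \cdots N_n)$. The whole argument is essentially bookkeeping around the independence of $X$ and $\mathcal{P}_K$ together with the deterministic size and spacing bounds supplied by $E_R$; the only step that required real work, namely the pointwise Brownian winding estimate, is already encapsulated in Lemma \ref{le:ImprovedZhan}, so there is no substantive new obstacle here.
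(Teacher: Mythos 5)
Your argument is essentially the paper's own proof: freeze $\mathcal{P}_K$ (the paper does this by enumerating $\mathcal{P}_K^R$ independently of $X$), apply Lemma \ref{le:ImprovedZhan} pointwise with $r\asymp(K\log K)^{-1}$ using the spacing bound from $E_R$ (which also gives $\|x\|\geq r$), take a union bound over the at most $4RK\log K$ points, and absorb the logarithms. The only soft spot is the final bookkeeping, where the $n$-th power of $\log\log(\max_i N_i)$ only collapses to the first power when $\log\log N_i\lesssim\log K$ (the regime actually used later), but the paper's own one-line summation is no more careful on this point, so there is no gap relative to the original.
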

\begin{proof}
We enumerate $\mathcal{P}_K^R=\{p_1,\dots, p_{\#\mathcal{P}_K^R} \}$ in a way which is independent from $X$. For $i>\#\mathcal{P}_K^R$, we set $p_i=p_1$ as a convention.

Then, for any deterministic $i\in \{1,\dots,\lfloor K\log(K)\rfloor \}$, we can apply the previous lemma to $x=p_i$ and with $r=\min(\delta(\mathcal{P}_K),1)$.

The conclusion then follows from summation over $i$, using the fact that for $K$ large enough, \[K\log(K) \log(K\log(K))^n\leq 2 K\log(K)^{n+1}.\]
\end{proof}
%

One can finally give some controls on the $\beta_i$. The following corollary allows to exclude three kind of events. The first convergence allows to exclude the possibility that the same letter $x$ appears once with a very large exponent, and then once again with a large exponent. The second convergence allows to exclude the possibility that the same letter $x$ appears three times, each time with a large exponent. The third convergence allows to exclude the possibility that many letters $x$ each appears twice, each time with a large exponent.
\begin{corollary}
\label{coro:threeBounds}
Let 
$\epsilon\in(0,\tfrac{1}{6})$. Then,
\[
\mathbb{P}\Big(\exists x\in \mathcal{P}_K^R: \beta_1(x,g_x^1)\geq K^{\frac{2}{3}}, \beta_2(x,g_x^1)\geq K^{\frac{1}{2}-\epsilon})\underset{K\to \infty}\to 0
\]
and
\[
\mathbb{P}\Big(\exists x\in \mathcal{P}_K^R: \beta_2(x,g_x^1)\geq K^{\frac{1}{2}+\epsilon}\Big)\underset{K\to \infty}\to 0.
\]
Besides,
\[
\mathbb{P}\Big(\#\big\{ x\in \mathcal{P}_K^R : \beta_2(x,g_x^1)\geq K^{\frac{1}{2}-\epsilon}\big\} \geq K^{3\epsilon}   \Big)\underset{K\to \infty}\to 0.
\]
\end{corollary}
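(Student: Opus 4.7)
The plan is to convert each $\beta$-condition into a winding/distance event of the type controlled by Corollary \ref{coro:improvedZhan} (for the first two claims) or to use the per-point estimate of Lemma \ref{le:ImprovedZhan} together with Markov's inequality (for the third). The key translation is the following. If $\beta_1(x,g_x^1)\geq A$ and $\beta_2(x,g_x^1)\geq B$, pick two positions $j_1<j_2$ in the canonical form of $g_x^1\in \mathbb{F}_{\mathcal{P}_x}$ where the letter $x$ appears with absolute exponent at least $A$ and $B$. Because the Cayley graph of $\mathbb{F}_{\mathcal{P}_x}$ is a tree, the walk $t\mapsto g_x^t$ from $1$ to $g_x^1$ must traverse every prefix of the reduced word, so there exist times $s_1<t_1<s_2<t_2$ with $g_x^{s_i,t_i}=x^{\alpha_{j_i}}$, whence $|\theta_{s_1,t_1}(x)|\geq A$ and $|\theta_{s_2,t_2}(x)|\geq B$; the strict inequality $t_1<s_2$ follows from the canonical form, since $a_{j_1+1}\neq x$ forces $j_2\geq j_1+2$. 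The same remark combined with Lemma \ref{le:Uexists} (applied in $\R^2\setminus\mathcal{P}_x$) produces a time $u_1\in (t_1,s_2)$ with $d(X_{u_1},x)\geq \delta(\mathcal{P}_x)\geq \delta(\mathcal{P}_K^R)$.

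For the first bound, Corollary \ref{coro:improvedZhan} applied with $n=2$, $N_1=K^{2/3}$, $N_2=K^{1/2-\epsilon}$ yields $\mathbb{P}(\text{event}\cap E_R)\leq CK^{-1/6+\epsilon}\log(K)^3\log\log(K)\to 0$, provided $\epsilon<\tfrac{1}{6}$. For the second bound, $\beta_1\geq \beta_2\geq K^{1/2+\epsilon}$ lets us apply the corollary with $N_1=N_2=K^{1/2+\epsilon}$, giving an upper bound of order $CK^{-2\epsilon}\log(K)^3\log\log(K)$, which still vanishes. Since $\mathbb{P}(E_R^c)\to 0$ by Lemma \ref{lemma:delta_small} together with standard concentration of Poisson counts, these handle the first two claims.

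For the third claim, we estimate the expectation of the count. Applying Lemma \ref{le:ImprovedZhan} to each deterministic point (via a $\sigma(\mathcal{P}_K)$-measurable enumeration of $\mathcal{P}_K^R$ and conditioning on $\mathcal{P}_K$), with $r=\delta$ and $N_1=N_2=K^{1/2-\epsilon}$, each point contributes at most $C\log(K)^2/K^{1-2\epsilon}$ to the probability. Summing over the $\leq 4RK\log K$ points of $\mathcal{P}_K^R$ on $E_R$ yields
\[
\mathbb{E}\bigl[\#\{x\in \mathcal{P}_K^R : \beta_2(x,g_x^1)\geq K^{1/2-\epsilon}\}\cdot \mathbf{1}_{E_R}\bigr]\leq C'K^{2\epsilon}\log(K)^3,
\]
and Markov's inequality gives $\mathbb{P}(\#\{\cdots\}\geq K^{3\epsilon}\cap E_R)\leq C'K^{-\epsilon}\log(K)^3\to 0$. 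Combined with $\mathbb{P}(E_R^c)\to 0$, this concludes.

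The main technical point is the first paragraph: identifying \emph{signed} winding numbers on disjoint intervals (rather than going through the half-turn count of Proposition \ref{prop:Si}, which would cost a square root) directly from the $\beta_i$ via the tree structure of the Cayley graph of $\mathbb{F}_{\mathcal{P}_x}$. Once this translation is in place, the three numerical bounds follow immediately from the Zhan-type estimate.
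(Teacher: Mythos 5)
Your proposal is correct and follows essentially the same route as the paper's proof: the same translation of the conditions on $\beta_1(x,g^1_x)$ and $\beta_2(x,g^1_x)$ into two disjoint intervals of large signed winding separated by an excursion to distance $\delta$, obtained from the geodesic walk in the Cayley tree of $\mathbb{F}_{\mathcal{P}_x}$ together with Lemma \ref{le:Uexists}, followed by Corollary \ref{coro:improvedZhan} (with $N_1=K^{2/3},N_2=K^{1/2-\epsilon}$, resp. $N_1=N_2=K^{1/2+\epsilon}$) for the first two limits, and by the per-point Lemma \ref{le:ImprovedZhan} plus a first-moment and Markov argument for the third. Your exponent bookkeeping ($K^{-1/6+\epsilon}$, $K^{-2\epsilon}$, and $K^{2\epsilon}\log(K)^3$ before Markov) matches the paper's up to logarithmic factors, so nothing further is needed.
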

\begin{proof}
In the three cases, we control the probability of the given event intersected with $E_R\cap F_R$. Since $\mathbb{P}(E_R\cap F_R)$ can be made arbitrary close to $1$, it is sufficient to conclude.
Let $G_1$ be the event
\[\exists x\in \mathcal{P}_K^R: \beta_1(x,g_x^1)\geq K^{\frac{2}{3}}, \beta_2(x,g_x^1)\geq K^{\frac{1}{2}-\epsilon}.\]
On the event $G_1\cap E_R\cap F_R$,  let us write $g_x^1$ as $g_1 x^{\pm \beta_1 } g_2 x^{\pm \beta_2} g_3$ (the case $g_x^1=g_1 x^{\pm \beta_2 } g_2 x^{\pm \beta_1} g_3$ is similar). Let $h_0,\dots, h_N$ be the geodesic path from $1$ to $g^1_x$, and let $i,j,k,l$ be the indices such that
$h_i= g_1$, $h_j=g_1x^{\pm \beta_1 } $, $h_k=g_1 x^{\pm \beta_1 } g_2$, $h_l=g_1 x^{\pm \beta_1 } g_2 x^{\pm \beta_2} $, and let $T_i$ be the infimum of the times $t$ such that the class of $X_{0,t}\cdot [X_{t},X_0]$ on $\mathcal{P}_x$ is equal to $h_i$.
Then, the loop $X_{|[T_i,T_j]}\cdot [X_{T_j},X_{T_i}]$ winds $\pm \beta_1$ times around $x$, and the loop  $X_{|[T_k,T_l]}\cdot [X_{T_l},X_{T_k}]$ winds $\pm \beta_2$ times around $x$.

According to Lemma \ref{le:Uexists}, there exists $U\in [T_j,T_k]$ such that the distance between $X_{U}$ and $x$ is at least $\delta$. Therefore, the considered event implies
\[\exists x\in \mathcal{P}_K,
\exists s_1<t_1<u_1<s_2<t_2: |\theta_{s_1,t_1}(x)|\geq K^{\frac{2}{3}}, |\theta_{s_2,t_2}(x)|\geq K^{\frac{1}{2}-\epsilon}, d(X_{u_1},x)\geq \delta,
\]
or the similar event with the exponents $\frac{2}{3}$ and $\frac{1}{2}-\epsilon$ switched.
We then apply Corollary \ref{coro:improvedZhan}, and we get a probability smaller than $CK \log(K)^{4} K^{-\frac{2}{3}}K^{-\frac{1}{2}+\epsilon}$, which goes to $0$ as $K\to \infty$ since $\epsilon<\frac{1}{6}$.
This proves the first convergence.

The second convergence is obtain in a similar manner, we left it to the reader.

For the third one, we enumerate $\mathcal{P}^R_K=\{x_1,\dots, x_{\# \mathcal{P}^R_K}\}$ as before, in any way independent from $X$, and we set conventionally $x_i=x_1$ for $i>\# \mathcal{P}^R_K$.


For a given $i\in \mathbb{N}$, let $H_i$ be the event
\[ \beta_2(x_i,[\bar{X}])\geq K^{\frac{1}{2}-\epsilon}.\]
Applying, as before, Lemma \ref{le:Uexists} and then Lemma \ref{le:ImprovedZhan} (instead of Corollary \ref{coro:improvedZhan}), we obtain that
\[
\mathbb{P}(F_R\cap E_R \cap H_i )\leq C \log(K)^2 K^{-1+2\epsilon}.
\]

Therefore,
\[
\mathbb{E}[\# \{ i:  H_i \mbox{ holds }\} \mathbbm{1}_{F_R\cap E_R} ] \leq C K \log(K) \log(K)^2 K^{-1+2\epsilon}=C\log(K)^3 K^{2\epsilon}.
\]
From Markov inequality, we deduce that
\[
\mathbb{P}\big(\# \{ i:  H_i\cap F_R\cap E_R \mbox{ holds }\} \geq K^{3\epsilon} \big) \leq C\log(K)^3 K^{-\epsilon}\underset{K\to \infty}\to 0.
\]
This concludes the proof.
\end{proof}
%
%
%
%
%
%
%
The previous bounds tells us about the highest values $\beta_1,\beta_2,\beta_3$, but it does not tell us anything about the behaviour of the tail $S^{(i)}$. Controlling this tail is the goal of the following subsection.

\subsection{Controlling the tails $S^{(i)}$}
The idea is to apply Proposition \ref{prop:Si}, which relates $S^{(i)}$ with $\theta_{\frac{1}{2}}$. We first show that $\theta_{\frac{1}{2}}$ can itself be related to $\theta$ when the curve is Brownian.

\begin{lemma}
  \label{le:sqrt}
  There exists a finite constant $C$ such that for all $N\geq 1$, and $x\in \R^2$,
  \[ \mathbb{P}(\theta_{\frac{1}{2}}(x)\geq N )\leq C  \mathbb{P}(\theta(x)\geq \sqrt{N} ).\]
\end{lemma}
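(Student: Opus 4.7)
The plan is to use the skew-product decomposition of planar Brownian motion around $x$. Writing $X_t - x = R_t e^{i \Theta_t}$ and introducing the time change $\sigma(t) = \int_0^t R_u^{-2}\,du$, the classical theorem of It\^o--McKean asserts that $\hat R(s) := \log R_{\sigma^{-1}(s)}$ and $\hat \Theta(s) := \Theta_{\sigma^{-1}(s)}$ are independent standard real Brownian motions. The integer winding $\theta(x)$ of $\bar X$ equals $(\Theta(1) - \Theta(0))/(2\pi)$ up to a bounded contribution (at most one in absolute value) coming from the closing chord. The two half-lines $d_x^1$ and $d_x^2$ lift in $\hat \Theta$-coordinates to the lattices $L_1 = \tfrac{\pi}{2} + 2\pi \mathbb{Z}$ and $L_2 = -\tfrac{\pi}{2} + 2\pi \mathbb{Z}$; since neighbouring points of $L_1 \cup L_2$ are at distance exactly $\pi$, each completed half-turn between $t_i$ and $t_{i+1}$ corresponds in skew-product time to $\hat\Theta$ moving by exactly $\pm \pi$ from one lattice to the other.

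Set $\tilde t_k = \sigma(t_k)$ and $W_k = \hat\Theta(\tilde t_k)$, so that $\epsilon_k := (W_k - W_{k-1})/\pi \in \{\pm 1\}$ records the direction of the $k$-th half-turn. By the strong Markov property of $\hat\Theta$ at the stopping times $\tilde t_k$ and by the reflection symmetry of Brownian motion in a centred interval, the signs $(\epsilon_k)_{k \geq 2}$ are i.i.d.\ Rademacher, the time gaps $\Delta_k = \tilde t_k - \tilde t_{k-1}$ are i.i.d.\ (distributed as the exit time of a standard Brownian motion from $(-\pi, \pi)$), and the two sequences $(\epsilon_k)$ and $(\Delta_k)$ are mutually independent and jointly independent of $(\tilde t_1, W_1)$. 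Moreover $\sigma(1) = \inf\{s : \int_0^s e^{2\hat R(u)}\,du \geq 1\}$ is a functional of $\hat R$ alone, hence by independence of $\hat R$ and $\hat\Theta$ it is independent of all of these. The key observation is that the event $\{\theta_{\frac{1}{2}}(x) \geq N\} = \{\tilde t_{N-1} \leq \sigma(1)\}$ is measurable with respect to $(\tilde t_1, \Delta_2, \dots, \Delta_{N-1}, \sigma(1))$ and does not involve the signs $\epsilon_k$; conditionally on this event, the walk $S_m := \sum_{k=2}^{m+1} \epsilon_k$ is therefore still an unconditioned simple random walk.

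By the central limit theorem there are constants $c, N_0 > 0$ such that $\mathbb{P}(|S_{N-2}| \geq 2\sqrt N \mid \theta_{\frac{1}{2}} \geq N) \geq c$ for every $N \geq N_0$. On the event $\{\theta_{\frac{1}{2}} \geq N\}$, $\hat\Theta$ stays in a single cell of width $2\pi$ between $\tilde t_{N-1}$ and $\sigma(1)$, and a similar bound controls the excursion before $\tilde t_1$, so combining these with the chord correction gives $|\theta(x) - \tfrac{1}{2} S_{N-2}| \leq C_0$ for a universal constant $C_0$. Hence for $N \geq N_0$,
\[
\mathbb{P}\big(\theta_{\frac{1}{2}}(x) \geq N\big) \leq \tfrac{1}{c}\, \mathbb{P}\big(|\theta(x)| \geq \sqrt N - C_0\big) \leq \tfrac{2}{c}\, \mathbb{P}\big(\theta(x) \geq \sqrt N - C_0\big),
\]
using the symmetry $\theta(x) \overset{(d)}{=} -\theta(x)$ of the Brownian winding. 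The shift by $C_0$ is absorbed into the overall constant via the Cauchy-like tail estimate on $\theta(x)$ that underlies Lemma \ref{le:ImprovedZhan} (which ensures $\mathbb{P}(\theta \geq M - C_0)/\mathbb{P}(\theta \geq M)$ is uniformly bounded in $M$), and the finitely many values $N < N_0$ are handled trivially. I expect the main obstacle to be a careful verification of the independence assertion in the second paragraph, which requires combining the strong Markov property of the angular Brownian motion with the structural independence of the radial and angular parts supplied by the skew product.
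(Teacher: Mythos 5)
Your skew-product setup and the independence structure you worry about in the last sentence are actually fine (exit side and exit time from a centred interval are independent by symmetry, the pairs are i.i.d.\ by the strong Markov property, and $\sigma(1)$ is a functional of $\hat R$ alone); this part parallels the paper's argument, which obtains the same i.i.d.\ Rademacher signs by reflecting the planar Brownian motion about the axis $d_x^1\cup d_x^2$ at the stopping times $t_i$ rather than via the skew product. The genuine gap is the sentence ``on the event $\{\theta_{\frac{1}{2}}(x)\geq N\}$, $\hat\Theta$ stays in a single cell of width $2\pi$ between $\tilde t_{N-1}$ and $\sigma(1)$''. That is only true on $\{\theta_{\frac{1}{2}}(x)=N\}$: on $\{\theta_{\frac{1}{2}}(x)\geq N\}$ the path may complete arbitrarily many further half-turns after $\tilde t_{N-1}$, so $\theta(x)-\tfrac12 S_{N-2}$ contains the extra term $\tfrac12\sum_{k\geq N}\epsilon_k$ and is not bounded by any universal $C_0$. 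Consequently the inequality $\mathbb{P}(\theta_{\frac12}(x)\geq N)\leq c^{-1}\mathbb{P}(|\theta(x)|\geq\sqrt N-C_0)$ does not follow as written.

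The repair is to condition on the sign-free data $(\tilde t_1,(\Delta_k)_k,\sigma(1))$, equivalently on the value $M=\theta_{\frac12}(x)\geq N$: this determines the total number $M-1$ of completed half-turns, and given it the \emph{full} sum of signs up to the last half-turn is an unconditioned symmetric walk of length $M-1\geq N-1$, while the uncontrolled pieces (before $\tilde t_1$, after the last half-turn, and the closing chord) really are bounded. A CLT lower bound uniform in $M\geq N$ then gives $\mathbb{P}(\theta(x)\geq\sqrt N\mid \theta_{\frac12}(x)\geq N)\geq c$, which is exactly the route of the paper (it writes $\theta(x)=\sum_{i=1}^{\theta_{1/2}(x)}\epsilon_i+\epsilon_0$ with $|\epsilon_0|\leq 1$ and conditions on $\theta_{\frac12}(x)=N$). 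A second, minor point: your absorption of the shift $C_0$ invokes a two-sided tail comparison $\mathbb{P}(\theta\geq M-C_0)/\mathbb{P}(\theta\geq M)$ bounded, but the cited estimate behind Lemma \ref{le:ImprovedZhan} is only an upper bound on the tail; it is simpler to require $|S|\geq 2\sqrt N+2C_0$ in the CLT step, which costs nothing and removes the shift altogether.
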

\begin{proof}
In short, it follows from the fact that $\theta(x)$ is, up to an error of $\pm 1$, the sum of $\theta_{\frac{1}{2}}(x)$ i.i.d. centered Bernoulli variables.

Let use recall from \eqref{eq:def:ti} that the stopping times $t_i$ correspond to the times $t$ when $\theta_{\frac{1}{2}}(x, X_{|[0,t]})$ increases. It is easily seen that the real-valued windings $\tilde{\theta}(x, X_{|[0,t_i]})$ and $\tilde{\theta}(x, X_{|[0,t_{i+1}]})$
are related  by
\[ \epsilon_i=\tilde{\theta}(x, X_{|[0,t_{i+1} ]})-\tilde{\theta}(x, X_{|[0,t_{i}]})\in \{-\tfrac{1}{2}, \tfrac{1}{2} \}. \]
  The integer winding number $\theta(x)$ is then given by
  \[\theta(x)=\sum_{i=1}^{\theta_{\frac{1}{2}}(x) } \epsilon_i+\epsilon_0,\]
  with $\epsilon_0\in \{-1,-\frac{1}{2},0,\frac{1}{2},1\}$.

Since the $t_i$ are stopping times, we can apply the reflection property of the Brownian motion at the times $t_i$, with reflection around the axe $d^1_x\cup d^2_x$. We can deduce from it that the variables $\epsilon_i$ are i.i.d. symmetric Bernoulli variables, and that for any $n$, $(\epsilon_1,\dots, \epsilon_n)$ is independent from $\theta_{\frac{1}{2}}(x)$ conditional to $n\leq \theta_{\frac{1}{2}}(x)$.
  Hence,
  \[
  \mathbb{P}\big(\theta(x)\geq \sqrt{N} \big|\theta_{\frac{1}{2}}(x)= N \big)=
  \mathbb{P}\Big(\epsilon_0+ \sum_{i=1}^{\theta_{\frac{1}{2}}(x) } \epsilon_i\geq \sqrt{N}\Big|\theta_{\frac{1}{2}}(x)=N\Big)\underset{N\to \infty}{\longrightarrow} \int_{1}^{+\infty} \frac{e^{-2 t^2 }}{\sqrt{\pi/2}}\d t\neq 0.
  \]
  Since the right-hand side is stricly positive for all $N$, there exists a constant $c>0$ such that for all $N$,
  \[ \mathbb{P}\big(\theta(x)\geq \sqrt{N} \big|\theta_{\frac{1}{2}}(x)= N \big)\geq c.\]

  For any two real random variables $U,V$,
  \[\mathbb{P}(U\geq u)\leq \frac{\mathbb{P}(V\geq v ) }{\mathbb{P}\big( V\geq v\big| U\geq u \big) }.\]
  In particular,
  \[
  \mathbb{P}(\theta_{\frac{1}{2}}(x)\geq N )\leq \frac{\mathbb{P}(\theta(x)\geq \sqrt{N} ) }{ \mathbb{P}\big(\theta(x)\geq \sqrt{N} \big|\theta_{\frac{1}{2}}(x)= N \big) }
  \leq c^{-1}\mathbb{P}(\theta(x)\geq \sqrt{N} ),
  \]
  which concludes the proof.
\end{proof}

One can now give the following bound on the tails $S^{(i)}$.
\begin{lemma}
  \label{le:s5}
  Let $\epsilon\in (0,\tfrac{1}{10})$. Then,
  \[
  \mathbb{P}( \exists x\in \mathcal{P}_K: S^{(5)}(x,g^1_x )\geq K^{\tfrac{1}{2}-\epsilon}) \underset{K\to \infty}\to 0
  \]
\end{lemma}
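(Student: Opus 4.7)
The plan is to reduce the statement to a half-turn analogue of Corollary \ref{coro:improvedZhan} applied with $n=5$ intervals, and then check that the exponents work out precisely at the threshold $\epsilon<\tfrac{1}{10}$. First, I work on the event $E_R\cap F_R$, which has probability arbitrarily close to $1$ as $K\to\infty$ and then $R\to\infty$. On $F_R$, the Brownian motion is contained in $B(0,R)$; for any $x\in\mathcal{P}_K$ with $\|x\|>R$, the loop $\bar X$ is homotopic in $\R^2\setminus\mathcal{P}_x$ to a loop in $B(0,R)\setminus(\mathcal{P}_x\cap B(0,R))$, so $g^1_x$ lies in the subgroup of $\pi_1(\mathcal{P}_x)$ generated by the $[\ell_y]$ with $y\in\mathcal{P}_K^R\subseteq\mathcal{P}_x\setminus\{x\}$. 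In particular $\alpha_x(g^1_x)=0$ and $S^{(5)}(x,g^1_x)=0$, so it suffices to handle $x\in\mathcal{P}_K^R$, a set of size at most $4RK\log K$ with $\delta(\mathcal{P}_K^R)\geq (K\log K)^{-1}$.

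Assume now that $S^{(5)}(x,g^1_x)\geq K^{1/2-\epsilon}$ for some such $x$. Looking inside the proof of Proposition \ref{prop:Si} (rather than at its final statement, where the five intervals of the proof were collapsed to four via monotonicity), applied with $i=5$, one obtains times $0<t_1<s_1<u_1<t_2<s_2<u_2<\dots<u_4<t_5<s_5=1$ such that $\theta_{\frac{1}{2}}(x,X^{t_k,s_k})\geq M:=\lfloor K^{1/2-\epsilon}/5\rfloor$ for every $k\in\{1,\dots,5\}$ and $d(X_{u_k},x)\geq\delta$ for every $k\in\{1,\dots,4\}$. This is exactly the structure of the event in Corollary \ref{coro:improvedZhan} with $n=5$, except that $\theta$ is replaced by $\theta_{\frac{1}{2}}$.

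To bound the probability of this half-turn event, combine Lemma \ref{le:sqrt} with the Zhan estimate \eqref{eq:doubleZhan} and the monotonicity of $\theta_{\frac{1}{2}}$ in the time interval to get, uniformly for $\|x\|\geq r$,
\[
\mathbb{P}\bigl(\theta_{\frac{1}{2}}(x,X)\geq N\bigr)\leq C\,\frac{\log(r^{-2}\log N)}{\sqrt{N}}.
\]
Following the stopping-time argument that proves Lemma \ref{le:ImprovedZhan}, set $\tilde U_0=0$ and recursively
\[
\tilde T_{i+1}=\inf\{t>\tilde U_i:\theta_{\frac{1}{2}}(x,X|_{[\tilde U_i,t]})\geq M\},\qquad \tilde U_{i+1}=\inf\{u>\tilde T_{i+1}:d(X_u,x)\geq\delta\}.
\]
The strong Markov property at each $\tilde U_i$, combined with the above estimate, gives $\mathbb{P}(\tilde T_5<\infty)\leq C(\log K)^5/M^{5/2}$; and the existence of the geometric certificate of the preceding paragraph forces $\tilde T_5<\infty$. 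Summing over $x\in\mathcal{P}_K^R$ (using the bounds from $E_R$) yields
\[
\mathbb{P}\bigl(E_R\cap F_R\cap\{\exists x:S^{(5)}(x,g^1_x)\geq K^{1/2-\epsilon}\}\bigr)\leq C'\,\frac{K(\log K)^6}{M^{5/2}}=C''(\log K)^6\,K^{-1/4+5\epsilon/2},
\]
which tends to $0$ precisely when $\epsilon<\tfrac{1}{10}$.

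The main obstacle is that one really needs five strictly separated intervals, not four. With only four factors of $1/\sqrt{M}$ the summed bound would become $K/M^2\asymp K^{2\epsilon}$, which does not decay, so one cannot simply quote Proposition \ref{prop:Si} as stated and must keep all five intervals $[t_k,s_k]$ from its proof (including the initial one $[t_1,s_1]\subset [0,u_1]$). A secondary technical point is to set up the stopping times so that the half-turn Zhan bound applies after shifting by $\tilde U_i$; this works because $\theta_{\frac{1}{2}}$ on a sub-path is dominated by $\theta_{\frac{1}{2}}$ on the whole shifted path, to which the Markov property applies directly.
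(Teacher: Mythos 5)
Your proof follows essentially the same route as the paper's: reduce to $x\in\mathcal{P}_K^R$ on the event $E_R\cap F_R$, extract five separated stretches of at least $K^{\frac12-\epsilon}/5$ half-turns around $x$ via (the proof of) Proposition \ref{prop:Si}, bound each stretch through Lemma \ref{le:sqrt} combined with the estimate \eqref{eq:doubleZhan}, multiply the bounds by the strong Markov property at the successive stopping times, and sum over the at most $4RK\log K$ points, giving $K^{-\frac14+\frac{5\epsilon}{2}}$ up to logarithms. Your insistence on keeping the five intervals from the proof of Proposition \ref{prop:Si} (rather than the four of its literal statement) is precisely what the paper's own computation implicitly uses, since its bound $(K^{\frac12-\epsilon})^{-\frac52}$ corresponds to five factors, so the two arguments coincide, with yours stated a bit more carefully.
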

At this stage, the reader should starts to understand the point of this section and the previous one: this lemma, which will follow from our work in these sections, make use somehow closer to the `much simpler problem' we solved in section \ref{sec:simpler}. Instead of dealing with the whole apparitions $x$ in $[\bar{X}]$, we will only have to deal with the apparitions that comes with a large exponent.
\begin{proof} 
Set $N= \tfrac{K^{\frac{1}{2}-\epsilon }}{5}$.
For a given point $x\in \R^2$ with $\|x\|\geq \delta$,
let $U_1=0$, $U_{i+1}=\inf \{t:  \theta_{\frac{1}{2}}(x,X^{U_i,t} )\geq N, d(X_{t},x)\geq \delta$. Then, using Proposition \ref{prop:Si}, we have
\[
\mathbb{P}(S^{(5)}(x,g^1_x )\geq K^{\frac{1}{2}-\epsilon } )\leq \mathbb{P}\big( U_5 <1\big).
\]
Since the $U_i$ are stopping times, one can apply the Markov property to deduce
\[ \mathbb{P}\big( U_5 <1\big)\leq  \mathbb{P}(U_1<1)^5.\]
Lemma \ref{le:sqrt} says that $ \mathbb{P}(U_1<1)\leq C \mathbb{P}(\theta(x) \geq \sqrt{N} )$, which thanks to Equation \eqref{eq:doubleZhan} we know to be less than $C'\frac{\log(\delta^{-2} \log(N))}{N^{\frac{1}{2}}}$.

Combining these inequalities together, we get
\[
\mathbb{P}(S^{(5)}(x,g^1_x )\geq K^{\frac{1}{2}-\epsilon } )\leq C (K^{\frac{1}{2}-\epsilon })^{-\frac{5}{2}} \log(\delta^{-2} \log(K))^5.
\]

Since this holds for all $\|x\|\geq \delta$,
we get
\[ \mathbb{P}( E_R\cap F_R\cap \exists x\in \mathcal{P}^R_K:S^{(5)}(x,g^1_x )\geq K^{\frac{1}{2}-\epsilon }  )
\leq C' RK\log(K)^6 K^{-\frac{5}{4}+\frac{5}{2}\epsilon }\underset{K\to \infty}\longrightarrow 0.\]

\end{proof}

This estimation in itself is not very practical, but it can then be improved in the three following ways, which correspond to the three convergences of Corollary \ref{coro:threeBounds}.

\begin{proposition}
  \label{prop:threeBounds}
  Let $\epsilon\in (0,\tfrac{1}{20})$. Then,
  \[
  \mathbb{P}\big( \exists x\in \mathcal{P}_K: S^{(2)}(x,g^1_x)\geq K^{\frac{1}{2}-\epsilon} \mbox{ and } \beta_1(x,g^1_x)\geq K^{\frac{2}{3}}\big)\underset{K\to \infty}\to 0,
  \]
  \[
  \mathbb{P}\big( \exists x\in \mathcal{P}_K: S^{(2)}(x,g^1_x)\geq K^{\frac{1}{2}+\epsilon}\big)\underset{K\to \infty}\to 0,
  \]
  \[
  \mathbb{P}\big( \# \{x\in \mathcal{P}_K: S^{(2)}(x,g^1_x)\geq K^{\frac{1}{2}-\epsilon}  \}\geq K^{5\epsilon}  \big)\underset{K\to \infty}\to 0.
  \]
\end{proposition}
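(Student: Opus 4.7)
The plan is to reduce each of the three estimates to the corresponding assertion of Corollary~\ref{coro:threeBounds} (which concerns $\beta_2$) by using Lemma~\ref{le:s5} to show that the tail $S^{(5)}$ contributes negligibly to $S^{(2)}$. The key elementary observation is that, since the $|\beta_i|$ are arranged in decreasing order,
\[
S^{(2)}(x,g_x^1) = |\beta_2(x,g_x^1)|+|\beta_3(x,g_x^1)|+|\beta_4(x,g_x^1)|+S^{(5)}(x,g_x^1) \leq 3\,|\beta_2(x,g_x^1)|+S^{(5)}(x,g_x^1).
\]

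First I would fix $R$ large enough that $\mathbb{P}(F_R \cap E_R)$ is arbitrarily close to $1$ (following the reduction pattern of Corollary~\ref{coro:threeBounds}), and pick an auxiliary exponent $\epsilon' = 4\epsilon/3$, which lies in $(0,1/10)$ since $\epsilon \in (0,1/20)$. Lemma~\ref{le:s5} applied with $\epsilon'$ then shows that, with probability tending to $1$, the event
\[
\mathcal{A} := \bigl\{\,\forall x \in \mathcal{P}_K^R,\ S^{(5)}(x,g_x^1) < K^{1/2-\epsilon'}\,\bigr\}
\]
occurs. On $\mathcal{A}\cap F_R\cap E_R$, the displayed inequality together with $\epsilon' > \epsilon$ yields, for $K$ large enough,
\[
S^{(2)}(x,g_x^1) \geq K^{1/2-\epsilon} \;\Longrightarrow\; |\beta_2(x,g_x^1)| \geq \tfrac{1}{4} K^{1/2-\epsilon} \geq K^{1/2-2\epsilon},
\]
and analogously $S^{(2)}(x,g_x^1) \geq K^{1/2+\epsilon}$ implies $|\beta_2(x,g_x^1)| \geq K^{1/2+\epsilon/2}$.

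The three assertions of the proposition then follow from the three assertions of Corollary~\ref{coro:threeBounds} applied with adjusted exponents. For the first bound, I would apply the first estimate of the corollary with parameter $2\epsilon \in (0,1/6)$ to the event $\{|\beta_1|\geq K^{2/3},\ |\beta_2|\geq K^{1/2-2\epsilon}\}$. For the second bound, I would apply the second estimate of the corollary with parameter $\epsilon/2 \in (0,1/6)$. For the third bound, I would apply the third estimate with parameter $4\epsilon/3 \in (0,1/6)$, noting that the resulting cardinality bound is $K^{3\cdot 4\epsilon/3} = K^{4\epsilon} \leq K^{5\epsilon}$ as required. I do not anticipate any technical obstacle: the entire argument is bookkeeping of exponents, and the restriction $\epsilon < 1/20$ is precisely what ensures that all the auxiliary exponents ($\epsilon'$ in Lemma~\ref{le:s5} and $2\epsilon$, $\epsilon/2$, $4\epsilon/3$ in Corollary~\ref{coro:threeBounds}) fall into the ranges where those results apply.
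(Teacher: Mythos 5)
Your proof is correct and follows essentially the same route as the paper's: discard the tail via Lemma \ref{le:s5}, deduce that $\beta_2$ must be large from $S^{(2)}\leq 3|\beta_2(x,g^1_x)|+S^{(5)}(x,g^1_x)$, and conclude with the three convergences of Corollary \ref{coro:threeBounds}. The paper only writes out the first case (using the auxiliary exponent $2\epsilon$ where you use $4\epsilon/3$), so your explicit exponent bookkeeping for the second and third bounds is just a harmless elaboration of the same argument.
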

\begin{proof}
  The three proofs are similar, we only write the first one.
  Thanks to Lemma \ref{le:s5}, we can assume that $S^{(5)}(x,g^1_x)\leq K^{\frac{1}{2}-2\epsilon}$ for all $x\in \mathcal{P}_K$.
  Then, for $S^{(2)}(x,g^1_x)$ to be larger than $K^{\frac{1}{2}-\epsilon}$,
  $\beta_2(x,g^1_x)$ has to be larger than $\frac{K^{\frac{1}{2}-\epsilon} -K^{\frac{1}{2}-2\epsilon} }{3}$,
  which is larger than $K^{\frac{1}{2}-\frac{3 \epsilon}{2}} $ for $K$ large enough. Since we also ask for $\beta_1(x,g^1_x)$ to be large, we can apply the first convergence in Corollary \ref{coro:threeBounds} to conclude.

  %
\end{proof}
In the following, we fix some $\epsilon\in (0,\tfrac{1}{20})$. We call $G_R$ the intersection of $F_R$, $E_R$, and the complementaries of the three events appearing in Proposition \ref{prop:threeBounds} (with the chosen $\epsilon$). In particular, for any given $R$, $G_R$ has probability arbitrarily close to $1$ when $R$ is large.

\begin{lemma}
\label{le:card}
As $K\to \infty$,
  \[K^{-\frac{1}{2}-\epsilon} \# \{x\in \mathcal{P}:
  \beta_1(x,g^1_x) > K^{\frac{1}{2}-\epsilon}\} \mbox{ and }K^{-\frac{1}{3}} \# \{x\in \mathcal{P}:
  \beta_1(x,g^1_x) > K^{\frac{2}{3}}\} \]
  converges in probability towards $\frac{1}{\pi}$.
\end{lemma}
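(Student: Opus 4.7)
The plan is to reduce the claim about $\beta_1$ to the analogous statement about the integer winding number $\theta$, which I will control via a Werner-type area asymptotic combined with Poisson concentration. The starting point is the elementary inequality
\[
\big|\,|\theta(x)| - \beta_1(x, g^1_x)\,\big| \leq S^{(2)}(x, g^1_x),
\]
which follows from $\theta(x) = \sum_k \beta_k(x, g^1_x)$ and the definition of $S^{(2)}$: the counts $\#\{|\theta| > T\}$ and $\#\{\beta_1 > T\}$ will therefore agree up to lower-order error whenever $S^{(2)}$ is typically small.

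I fix an auxiliary parameter $\epsilon' \in (\epsilon, 1/15)$, a nonempty interval since $\epsilon < 1/20$, and set $\eta_K := K^{1/2-\epsilon'}$. The third item of Proposition \ref{prop:threeBounds} applied at $\epsilon'$ in place of $\epsilon$ says that, with probability tending to $1$, at most $K^{5\epsilon'}$ points of $\mathcal{P}$ satisfy $S^{(2)} \geq \eta_K$. For both thresholds $T \in \{K^{1/2-\epsilon}, K^{2/3}\}$ my choice of $\epsilon'$ gives both $\eta_K = o(T)$ and $K^{5\epsilon'} = o(K/T)$; here $K/T$ equals $K^{1/2+\epsilon}$ or $K^{1/3}$ respectively, matching the normalisation in the statement. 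Splitting $\mathcal{P}$ into \emph{good} ($S^{(2)} < \eta_K$) and \emph{bad} (at most $K^{5\epsilon'}$) points and applying the starting inequality to the good ones yields, with probability tending to $1$, the two-sided sandwich
\[
\#\{x: |\theta(x)| > T + \eta_K\} - K^{5\epsilon'} \leq \#\{x: \beta_1(x, g^1_x) > T\} \leq \#\{x: |\theta(x)| > T - \eta_K\} + K^{5\epsilon'}.
\]

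The extremes will be handled by a Werner-type asymptotic: $\mathbb{P}^X$-almost surely, as $N \to \infty$,
\[
N \cdot \lambda\{y \in \R^2 : |\theta(y)| > N\} \longrightarrow \frac{1}{\pi},
\]
obtained by summation from the classical estimate $n^2 \lambda\{\theta = n\} \to 1/(2\pi)$ of Werner \cite{Werner, Werner2} (valid for $\bar{X}$ since the closing line segment has Lebesgue-null range). On the event $F_R$ one has $\{|\theta| > N\} \subset B(0, R)$, and conditionally on $X$ the count $\#\{x \in \mathcal{P}: |\theta(x)| > T\}$ is Poisson with parameter $K\lambda\{|\theta|>T\} \sim K/(\pi T) \to \infty$. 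Poisson concentration then gives
\[
\frac{T}{K}\, \#\{x \in \mathcal{P}: |\theta(x)| > T\} \xrightarrow{\mathbb{P}} \frac{1}{\pi},
\]
and the same limit holds for the shifted thresholds $T \pm \eta_K = T(1+o(1))$. Feeding this into the sandwich above, and noting that $K^{5\epsilon'}/(K/T)\to 0$, yields the two desired convergences.

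The main obstacle will be upgrading the Werner-type convergence from ``$N \to \infty$'' (pointwise in $N$, almost surely on $X$) to ``$N = N(K) \to \infty$ at the prescribed rate''; this should follow routinely from the monotonicity of $N \mapsto \lambda\{|\theta|>N\}$ and a standard diagonal argument, but deserves to be written out carefully.
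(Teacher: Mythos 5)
Your proof is correct and takes essentially the same route as the paper: discard the at most $K^{5\epsilon}$ points with large $S^{(2)}$ via Proposition \ref{prop:threeBounds}, sandwich the $\beta_1$-count between $|\theta|$-counts at slightly shifted thresholds using $\big||\theta(x)|-\beta_1(x,g^1_x)\big|\leq S^{(2)}(x,g^1_x)$, and conclude from Werner's tail-area asymptotic together with the conditional Poisson concentration step that the paper leaves implicit (the paper cites the tail estimate $kD_k\to\frac{1}{\pi}$ directly, in $L^2$ hence in probability, which already suffices — you need not, and should not, claim the almost-sure version). The only cosmetic adjustment is to take $\epsilon'\in(\epsilon,\tfrac{1}{20})$ rather than $(\epsilon,\tfrac{1}{15})$, so that Proposition \ref{prop:threeBounds} applies as stated.
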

\begin{proof}
  Let $\mathcal{P}'$ be the set of points $x$ in $\mathcal{P}$ for  which $S^{(2)}(x,g^1_x)\geq K^{\frac{1}{2}-2 \epsilon}$.
  On the event $G_R$, we know that there is at most $K^{5\epsilon}$ of them. Since $K^{-\frac{1}{2}+\epsilon} K^{5\epsilon}\underset{K\to \infty}{\to} 0$ and $K^{-\frac{1}{3}}K^{5\epsilon} \underset{K\to \infty}{\to} 0$,
  it suffices to show that
  \[K^{-\frac{1}{2}-\epsilon} \# \{x\in \mathcal{P}\setminus \mathcal{P}':
  \beta_1(x,g^1_x) > K^{\frac{1}{2}-\epsilon}\} \mbox{ and }K^{-\frac{1}{3}} \# \{x\in \mathcal{P}\setminus \mathcal{P}':
  \beta_1(x,g^1_x) > K^{\frac{2}{3}}\} \]
  converges in probability towards $\frac{1}{\pi}$.

  Since $ \theta(x,\bar{X})=\sum_{i\in I_x(g^1_x)} \alpha_i(g^1_x)$, we have
  \begin{align}
  \{x\in \mathcal{P}\setminus \mathcal{P}':
  |\theta(x,\bar{X})| > K^{\frac{1}{2}-\epsilon}+ K^{\frac{1}{2}-2 \epsilon} \}
  & \subseteq
  \{x\in \mathcal{P}\setminus \mathcal{P}' :
  \beta_1(x,g^1_x) > K^{\frac{1}{2}-\epsilon}\} \nonumber
  \\ &\subseteq
  \{x\in \mathcal{P}:
  |\theta(x,\bar{X})| > K^{\frac{1}{2}-\epsilon}- K^{\frac{1}{2}-2 \epsilon} \}.
  \label{eq:incadr}
  \end{align}
  Let $D_k$ be the area of the set of points $z\in \R^2$ such that
  $|\theta(x,\bar{X})| >k$. Then $kD_k$ converges towards $\tfrac{1}{\pi}$ in $L^2$ and therefore in probability (see \cite{Werner}).
  It follows that both
  \[K^{-\frac{1}{2}-\epsilon} \#
  \{x\in \mathcal{P}\setminus \mathcal{P}':
  |\theta(x,\bar{X})| > K^{\frac{1}{2}-\epsilon}+ K^{\frac{1}{2}-2\epsilon} \}
  \mbox{ and }
  K^{-\frac{1}{2}-\epsilon} \#
  \{x\in \mathcal{P}:
  |\theta(x,\bar{X})| > K^{\frac{1}{2}-\epsilon}- K^{\frac{1}{2}-2\epsilon} \}
  \]
  converges in probability towards $\tfrac{1}{\pi}$ as $K$ goes to infinity.

  This proves the first convergence. The second is obtain in an identical way.

\end{proof}
We now consider $H_R$ the large probability event
\[
G_H\cap \big\{ K^{-\frac{1}{2}-\epsilon} \# \{x\in \mathcal{P}: \beta_1(x,g^1_x) > K^{\frac{1}{2}-\epsilon}\} \leq 1\big\}\cap \big\{ -K^{\frac{1}{3}} \# \{x\in \mathcal{P}: \beta_1(x,g^1_x) > K^{\frac{2}{3}}\} \leq 1\big\}.
\]

\section{End of the proof}
\label{sec:end}
In order to finally conclude the proof of the main theorem, we split $\mathcal{P}_K^R$ into a disjoint union $\mathcal{P}_K^R= \mathcal{P}^0\sqcup\mathcal{P}^1\sqcup\mathcal{P}^2\sqcup\mathcal{P}^3$, with
\begin{align*}
\mathcal{P}^0 &=\{x \in \mathcal{P}_K^R : \beta_1(x,g^1_x) > K^{\frac{2}{3} }  \},\\
\mathcal{P}^1&=\{x \in \mathcal{P}_K^R :  K^{\frac{1}{2}-\epsilon} <\beta_1(x,g^1_x) \leq K^{\tfrac{2}{3}} \},\\
\mathcal{P}^2&=\{x \in \mathcal{P}_K^R :   \beta_1(x,g^1_x) \leq  K^{\frac{1}{2}-\epsilon} \mbox{ and } S^{(2)}(x,g^1_x) \geq  K^{\frac{1}{2}-\epsilon} \},\\
\mathcal{P}^3&=\{x \in \mathcal{P}_K^R :   \beta_1(x,g^1_x) \leq  K^{\frac{1}{2}-\epsilon} \mbox{ and } S^{(2)} (x,g^1_x) \leq  K^{\frac{1}{2}-\epsilon}  \}.
\end{align*}
The elements of $\mathcal{P}^0$ are somehow the most important, in the sense that there are the ones that are supposed to contribute to the limit. Besides, for each of these points in $\mathcal{P}^0$, there is one particular place where it appears with a large exponent, and this place is the most important.

%
%
%

For $k\in \{1,\dots, \#\mathcal{P}^0\}$, let $x_k$ be the $k^{\mbox{\scriptsize th}}$ element of $\mathcal{P}^0$ (with $\mathcal{P}^0$ inheriting the order of $\mathcal{P}_K$).
Let also $i_k$ be the unique index $i_k$ such that $\alpha_{x_{k},i_k}(g^1_{x_k} )=\beta_1(x_k,g^1_{x_k} )$, and let us write $c_{x_k}\circ\phi_{x_k}([\bar{X}])$ as
\[
c_{x_k}\circ\phi_{x_k}([\bar{X}])=
R^1_k
M_k
R^2_k
\]
with
\[M_k= w_{k,i_k} x_k^{\beta_1(x_k,g^1_{x_k} )}    w_{k,i_k}^{-1},\]
\[
R^1_k= w_{k,1} x_k^{ \alpha_{x_k,1}(g^1_{x_k} ) }w_{k,1}^{-1}\dots
w_{k,i_k-1} x_k^{ \alpha_{x_k,i_k-1}(g^1_{x_k} ) }w_{k,i_k-1}^{-1},\]
\[ R^2_k=w_{k,i_k+1} x_k^{ \alpha_{x_k,i_k-1}(g^1_{x_k} ) }w_{k,i_k+1}^{-1} \dots w_{k,n} x_k^{ \alpha_{x_k,n}(g^1_{x_k} ) }w_{k,n}^{-1},
\]
where the $w_{k,j}$ all lies in $\mathbb{F}_{\mathcal{P}_{x_k}\setminus \{x_k\}}$.
%
%
Finally, let
\[P_k=\Big(\prod_{x_{k-1}<x<x_k} c_x\circ \phi_x([\bar{X}])\Big),\]
with $x_0<x$ and $x<x_{\#\mathcal{P}_0}+1$ being empty conditions,
so that
\[
[\bar{X}]=P_1 (c_{x_1}\circ\phi_{x_1}([\bar{X}])) P_2\dots P_{{\#\mathcal{P}_0}} (c_{x_{\#\mathcal{P}_0} }\circ\phi_{x_{\#\mathcal{P}_0} }([\bar{X}])) P_{{\#\mathcal{P}_0} +1}=
\Big(\prod_{k=1}^{\# \mathcal{P}^0}  P_k\
R^1_k \
M_k\ R^2_k  \Big) P_{{\#\mathcal{P}_0} +1}.
\]

In the following, for $g_K,h_K\in \mathbb{F}_{ \mathcal{P}_K}$, we write $g_K\approx h_K$ if for any compact Lie group $G$, $\omega_K(g_K)$ converges in distribution if and only if
$\omega_K(h_K)$ converges in distribution, and the limit distributions are the same if they exists. We also write $g_K\simeq h_K$ if $\omega_K(g_K h_K^{-1})$ converges in distribution towards $1$, which is a stronger statement.

\begin{proposition}
\label{prop:ish}
\begin{equation}
[\bar{X}]\simeq \Big(\prod_{k=1}^{\# \mathcal{P}^0}  P_k\
M_k\ \Big) P_{{\# \mathcal{P}^0}+1} \approx
\prod_{k=1}^{\# \mathcal{P}^0}x_k^{\beta_1(x,g^1_{x_k})}
\simeq \prod_{k=1}^{\# \mathcal{P}^0}x_k^{\theta(x,\bar{X} )}
\approx \prod_{k=1}^{\# \mathcal{P}_K}x_k^{\theta(x,\bar{X} )}. \label{eq:program}
\end{equation}
\end{proposition}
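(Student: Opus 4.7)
The plan is to establish each of the four relations in \eqref{eq:program} separately, working throughout on the high-probability event $H_R$ where all the structural bounds from Sections~\ref{sec:free}--\ref{sec:brown} apply.

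The two $\simeq$ relations (the first and the third) are handled by elementary estimates based on bi-invariance of $d_G$. For the first, bi-invariance yields
\[
d_G\bigl(\omega_K([\bar{X}]),\,\omega_K(P_1 M_1 \cdots P_n M_n P_{n+1})\bigr)\leq\sum_{k=1}^{\#\mathcal{P}^0}\bigl(d_G(\omega_K(R^1_k),1)+d_G(\omega_K(R^2_k),1)\bigr).
\]
Each $R^j_k$ is a product of conjugates of $x_k^{\pm\alpha}$ whose total $|\alpha|$-mass is bounded by $S^{(2)}(x_k,g^1_{x_k})$; since $x_k\in\mathcal{P}^0$ satisfies $\beta_1(x_k)>K^{2/3}$, Proposition~\ref{prop:threeBounds} forces $S^{(2)}(x_k)\leq K^{1/2-\epsilon}$ on $H_R$. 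By bi-invariance each conjugate $c\,g_{x_k}^{\alpha}c^{-1}$ is at distance $\leq |\alpha|/K$ from $1$, so $d_G(\omega_K(R^j_k),1)\leq K^{-1/2-\epsilon}$, and summing over $\#\mathcal{P}^0\leq K^{1/3}$ yields $K^{-1/6-\epsilon}\to 0$. The third relation is analogous: $d_G(g_{x_k}^{\theta(x_k)},g_{x_k}^{\beta_1(x_k)})\leq|\theta(x_k)-\beta_1(x_k)|/K\leq S^{(2)}(x_k)/K\leq K^{-1/2-\epsilon}$, with the same telescoping.

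The two $\approx$ relations are the real obstacle; they share a common strategy. For the second, one uses conditional conjugation-invariance. Conditioning on $\mathcal{F}=\sigma(X,\mathcal{P}_K,(g_x)_{x\notin\mathcal{P}^0})$, the marks $(g_{x_k})_{x_k\in\mathcal{P}^0}$ are independent with conjugation-invariant marginals. Setting $Q_k=\omega_K(P_1\cdots P_k)$, $W_k=\omega_K(w_{k,i_k})$, and $V_k=Q_kW_k$, one rewrites $\omega_K(\prod_k P_kM_k\cdot P_{n+1})=\bigl(\prod_k V_k g_{x_k}^{\beta_k}V_k^{-1}\bigr)\cdot Q_{n+1}$. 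A downward induction on $k$ (at step $k$, conditioning on $\mathcal{F}\vee\sigma(g_{x_j}:j<k)$, which makes $V_k$ measurable while $g_{x_k}^{\beta_k}$ remains conditionally conjugation-invariant and independent) iteratively replaces each $V_k g_{x_k}^{\beta_k}V_k^{-1}$ by $g_{x_k}^{\beta_k}$, giving equality in distribution with $(\prod_k g_{x_k}^{\beta_k})\cdot Q_{n+1}$. It then suffices to prove $Q_{n+1}\to 1$ in probability, which is the non-commutative analog of the cancellation of a truncated Cauchy sum: Proposition~\ref{prop:tech} of Section~\ref{sec:tech} reduces $d_G(Q_{n+1},1)$ to an essentially commutative estimate, and the commutative bound $\mathbb{E}[K^{-2}|\sum_{x\notin\mathcal{P}^0}\theta(x)Z_x|^2]\leq CK^{-1/3}$ follows from the Cauchy-tail estimate $\mathbb{E}[\theta(x)^2\mathbbm{1}_{|\theta|\leq K^{2/3}}]\leq CK^{2/3}$ together with $|\mathcal{P}_K^R|\sim\pi R^2 K$. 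The fourth relation is treated the same way: Corollary~\ref{coro:4.7} permits any $\sigma(\mathcal{P}_K)$-measurable ordering of $\mathcal{P}_K^R$, so placing $\mathcal{P}^0$ first factorizes $\prod_{\mathcal{P}_K^R}g_x^{\theta(x)}$ as $(\prod_{\mathcal{P}^0}g_x^{\theta(x)})\cdot(\prod_{\text{rest}}g_x^{\theta(x)})$, and the tail product $\to 1$ in probability by the same Proposition~\ref{prop:tech}-based argument. The main obstacle throughout is precisely this lifting of the commutative $L^2$ cancellation of small jumps to the non-commutative setting via Proposition~\ref{prop:tech}.
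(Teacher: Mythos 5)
Your first and third relations follow the paper's argument and are fine, and your conditional-conjugation induction for removing the conjugators is essentially the paper's Lemma \ref{le:reduc}. The genuine gap is in the second $\approx$ relation, at the step where you claim that Proposition \ref{prop:tech} reduces $d_G(Q_{n+1},1)$ to the ``commutative'' estimate $K^{-2}\,\mathbb{E}\big[\big|\sum_{x\notin\mathcal{P}^0}\theta(x)Z_x\big|^2\big]$. Proposition \ref{prop:tech} controls $\mathbb{E}[d_G(\omega_K(g),1)]$ in terms of $|g|_2^2=\sum_x S^{(1)}(x,\pi^{\leq x}(g))^2$, and for $g=\prod_k P_k$ this is $\sum_{x\in\mathcal{P}^1\sqcup\mathcal{P}^2\sqcup\mathcal{P}^3} S^{(1)}(x,g^1_x)^2$, i.e.\ the \emph{unsigned} total multiplicity of each letter, not the algebraic winding number $\theta(x)$. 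These are very different: in $\prod_k P_k$ the letter $x$ appears spread over distinct conjugators $w_{x,k}$, so the sign cancellations that make $\theta(x)$ small are unavailable (this is exactly why $|g|_2$ is defined through $S^{(1)}$, and why the paper's Remark after Proposition \ref{prop:tech} insists on $\pi^{\leq x}$). Your Cauchy-tail bound $\mathbb{E}[\theta(x)^2\mathbbm{1}_{|\theta|\leq K^{2/3}}]\lesssim K^{2/3}$ therefore bounds the wrong quantity; the analogous per-point expectation for $S^{(1)}$ is of order $K$ (since $S^{(1)}(x,g^1_x)\lesssim\theta_{\frac12}(x)\sim\theta(x)^2$ has a tail of order $N^{-1/2}$), which after summing over $\sim K$ points and dividing by $K^2$ does \emph{not} vanish. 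So an expectation argument cannot close this step: you need the high-probability event structure, namely the splitting $\mathcal{P}^1,\mathcal{P}^2,\mathcal{P}^3$ with the cardinality and size bounds of Proposition \ref{prop:threeBounds}, Lemma \ref{le:card} and the events $G_R,H_R$, which yield $\sum_{x\notin\mathcal{P}^0}S^{(1)}(x,g^1_x)^2\leq K^{2-\epsilon}$ and only then make Proposition \ref{prop:tech} applicable. This bookkeeping is the core of the paper's proof of the second relation and is missing from yours.

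A secondary weak point: for the fourth relation you invoke Corollary \ref{coro:4.7} to ``place $\mathcal{P}^0$ first'', but $\mathcal{P}^0$ is defined through $g^1_x$, hence through $X$, so this reordering is not given by a bijection independent of $X$ conditional on $\mathcal{P}_K$, and Corollary \ref{coro:4.7} does not directly licence it (nor does it interact well with the i.i.d.\ structure needed in Proposition \ref{prop:orderedCauchy}). The paper instead keeps the norm order and removes the factors indexed by $\mathcal{P}_K\setminus\mathcal{P}^0$ in place, via Lemma \ref{le:reduc} together with Proposition \ref{prop:tech} applied to $\prod_{x\notin\mathcal{P}^0}x^{\theta(x,\bar X)}$ (here $|g|_2^2=\sum_{x\notin\mathcal{P}^0}\theta(x)^2\leq\sum_{x\notin\mathcal{P}^0}S^{(1)}(x,g^1_x)^2$, again bounded by the event-based estimate above); you should argue this relation the same way rather than by reordering.
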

Remark that this proposition, together with Corollary \ref{coro:4.7} allows to conclude to the main theorem. It would suffice to have $\approx$ relations everywhere, but the kind of arguments that show $\simeq$ relations are very different from the kind of arguments that show $\approx$ relations.

To prove Proposition \ref{prop:ish}, it only lack us two more lemma, in which the compact Lie group $G$ finally appears.

\begin{lemma}
  \label{le:reduc}
  Let $G$ be a compact Lie group endowed with a biinvariant metric. For all $K$, let $n_K$ be a random integer, and let $(X_i)_{i\in \{1,\dots n_k\} }$ and $(Y_i)_{i\in \{1,\dots n_k+1\} }$ be two sequences of $G$-valued random variables .

  Assume that $Y_1\dots Y_{n_K+1}$ converges in distribution towards $1$ (as $K$ goes to infinity), and that
  \[(X_i)_{i\in \{1,\dots,n_K\}}\overset{(d)}=((Y_1\dots Y_i)   X_i (Y_1\dots Y_i)^{-1} )_{i\in \{1,\dots,n_K\}}.
  \]

  Then, $X_1\dots X_{n_K}$ converges in distribution if and only if $Y_1 X_1Y_2\dots X_{n_K}Y_{n_K+1}$ converges in distribution.
\end{lemma}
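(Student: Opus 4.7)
The plan is to exploit a telescoping identity in $G$ that expresses the interleaved product $Y_1 X_1 Y_2 X_2 \cdots X_{n_K} Y_{n_K+1}$ as a product of conjugates of the $X_i$ multiplied by the tail $Y_1\cdots Y_{n_K+1}$. Set $Z_i \coloneqq Y_1 \cdots Y_i$ (with $Z_0 = 1$). A straightforward induction on $n_K$ yields
\[
Y_1 X_1 Y_2 X_2 \cdots Y_{n_K} X_{n_K} Y_{n_K+1} = \Big(\prod_{i=1}^{n_K} Z_i X_i Z_i^{-1}\Big)\, Z_{n_K+1},
\]
the inductive step being $(Z_i X_i Z_i^{-1})(Z_{i+1} X_{i+1} Z_{i+1}^{-1}) = Z_i X_i Y_{i+1} X_{i+1} Z_{i+1}^{-1}$ after the cancellation $Z_i^{-1} Z_{i+1} = Y_{i+1}$.

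The distributional hypothesis now lets us rewrite the left factor. Since $(X_i)_{i \leq n_K} \overset{(d)}{=} (Z_i X_i Z_i^{-1})_{i \leq n_K}$ as sequences of random length $n_K$, and the ordered product is a measurable function of such a sequence together with its length, we get
\[
\prod_{i=1}^{n_K} X_i \overset{(d)}{=} \prod_{i=1}^{n_K} Z_i X_i Z_i^{-1}.
\]
Hence the convergence in distribution of $\prod_{i=1}^{n_K} X_i$ is equivalent to that of $P_K \coloneqq \prod_{i=1}^{n_K} Z_i X_i Z_i^{-1}$, with the same limit law when it exists.

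It remains to show that $P_K$ converges in distribution if and only if $P_K Z_{n_K+1}$ does, with the same limit. Since $Z_{n_K+1} = Y_1\cdots Y_{n_K+1}$ converges in distribution to the constant $1 \in G$, and convergence in distribution to a constant is equivalent to convergence in probability, we have $d_G(Z_{n_K+1}, 1) \to 0$ in probability. By bi-invariance of $d_G$,
\[
d_G(P_K Z_{n_K+1}, P_K) = d_G(Z_{n_K+1}, 1) \underset{K\to \infty}\longrightarrow 0 \quad \text{in probability}.
\]
Because $G$ is compact, every continuous $f \colon G \to \mathbb{R}$ is bounded and uniformly continuous, so $f(P_K Z_{n_K+1}) - f(P_K) \to 0$ in $L^1$, whence $\mathbb{E}[f(P_K Z_{n_K+1})] - \mathbb{E}[f(P_K)] \to 0$. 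Thus $P_K$ and $P_K Z_{n_K+1}$ share the same set of subsequential limit distributions, which concludes the proof.

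The only non-routine step is identifying the telescoping identity; once it is in hand, the rest reduces to a Slutsky-type argument made available by the compactness of $G$ and the bi-invariance of its metric, with no real obstacle.
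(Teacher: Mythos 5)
Your proof is correct and follows essentially the same route as the paper's: the same telescoping identity $Y_1X_1Y_2\cdots X_{n_K}Y_{n_K+1}=\big(\prod_{i=1}^{n_K}Z_iX_iZ_i^{-1}\big)Z_{n_K+1}$, the same use of the conjugation hypothesis to identify $\prod_i Z_iX_iZ_i^{-1}$ with $\prod_i X_i$ in distribution, and the same Slutsky-type conclusion, which the paper leaves implicit and you spell out via bi-invariance and uniform continuity on the compact group. As a minor point, your version of the identity (with trailing factor $Y_1\cdots Y_{n_K+1}$ rather than $Y_1\cdots Y_{n_K}$) is the correct one.
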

\begin{proof}
  Remark that
  \begin{equation} Y_1 X_1Y_2\dots X_{n_K}Y_{n_K+1}
  = \Big(\prod_{i=1}^{n_K} ((Y_1\dots Y_i)   X_i (Y_1\dots Y_i)^{-1})\Big) Y_1\dots Y_{n_K}. \label{eq:sansnom}
  \end{equation}
  Since $Y_1\dots Y_{n_K}$ converges to $1$, the left-hand-side of \eqref{eq:sansnom} converges if and only if
  \[\Big(\prod_{i=1}^{n_K} ((Y_1\dots Y_i)   X_i (Y_1\dots Y_i)^{-1})\Big)\] converges as well, and the limits are the same. This last expression is equal in distribution to the product $X_1 \dots X_{n_K}$, hence the lemma.
  %
  %
\end{proof}
\begin{proposition}
\label{prop:tech}
  For a totally ordered finite set $\mathcal{P}$ and $g\in \mathbb{F}_{ \mathcal{P}}$, set $|g|_2\geq 0$ such that
  \[
  |g|_2^2=\sum_{x \in  \mathcal{P}} S^{(1)}(x,\pi^{\leq x}(g) )^2.
  \]

  Let $G$ be a compact Lie group, endowed with a biinvariant metric. Let $(H_i)_{i\in \mathcal{P}}$ a family of i.i.d. $\Ad$-invariant and symmetric $\mathfrak{g}$-valued random variables, with support on the ball of radius $K^{-1}$, and let
  $\omega_K: \mathbb{F}_{ \mathcal{P}} \to G$ be the random group morphism determined by $\omega_K(e_i)=\exp_G( H_i)$.

  There exists $C,c>0$ such that for any totally ordered finite set $\mathcal{P}$ with $\# \mathcal{P}\geq 4$, for all $K \geq 1$,  and all
  $g\in \mathbb{F}_{ \mathcal{P}}$ with
  $ |g|_2^2\leq cK^2$,
  \[
  \mathbb{E}[
  d_G(\omega_K(g),1)] \leq C \big( K^{-1}|g|_2+
    \frac{\log(\# \mathcal{P})^2}{\log (\log (\# \mathcal{P} ) ) } K^{-2} |g|_2^2 \big).
  \]
\end{proposition}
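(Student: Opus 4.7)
The plan is to combine the semidirect-product decomposition from Section~\ref{sec:free} with a martingale analysis in the Lie algebra. First I would factor $g = c_1 \circ \phi_1(g) \cdots c_k \circ \phi_k(g)$ with $k = \#\mathcal{P}$, and write each factor explicitly as $c_j \circ \phi_j(g) = \prod_i w_{j,i}\, x_j^{\alpha_{j,i}}\, w_{j,i}^{-1}$ with $w_{j,i} \in \mathbb{F}_{\mathcal{P}_{x_j}\setminus\{x_j\}}$. By Corollary~\ref{coro:bound}, $\sum_i |\alpha_{j,i}| = \|\alpha_{x_j}(c_j\circ\phi_j(g))\|_{\ell^1} \leq s_j := S^{(1)}(x_j,\pi^{\leq x_j}(g))$, and $\sum_j s_j^2 = |g|_2^2$. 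Setting $g_{j,i} := \omega_K(w_{j,i})$, $Y_{j,i} := \alpha_{j,i}\,\Ad(g_{j,i}) H_j$, and $A_j := \sum_i \alpha_{j,i}\,\Ad(g_{j,i})$, the key identities are
\[
\omega_K(c_j \circ \phi_j(g)) = \prod_i \exp(Y_{j,i}),\qquad \sum_i Y_{j,i} = A_j H_j,\qquad \|A_j\|_{\mathrm{op}} \leq s_j,
\]
with $A_j$ being $\mathcal{F}_{j-1}$-measurable for $\mathcal{F}_{j-1} := \sigma(H_1,\dots,H_{j-1})$, and $\|Y_{j,i}\| \leq |\alpha_{j,i}|/K$.

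Next I would linearize within each level using Baker--Campbell--Hausdorff (BCH). Since $\sum_i \|Y_{j,i}\| \leq s_j/K \leq \sqrt{c}$ is bounded (this is where the assumption $|g|_2^2 \leq cK^2$ enters, with $c$ chosen small enough for BCH to converge geometrically) and since the linear part of BCH equals $\sum_i Y_{j,i} = A_j H_j$, the approximation error is quadratic:
\[
d_G\bigl(\omega_K(c_j \circ \phi_j(g)),\, \exp(A_j H_j)\bigr) \leq C\bigl(\textstyle\sum_i \|Y_{j,i}\|\bigr)^2 \leq C\, s_j^2/K^2.
\]
By the bi-invariance inequality $d_G(\prod_l a_l, \prod_l b_l) \leq \sum_l d_G(a_l, b_l)$, these errors accumulate additively across $j$, yielding $d_G(\omega_K(g), P_K) \leq C K^{-2}|g|_2^2$, where $P_K := \prod_j \exp(A_j H_j)$. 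It then remains to bound $\mathbb{E}[d_G(P_K, 1)]$. By symmetry and Ad-invariance of $H_j$, conditional on $\mathcal{F}_{j-1}$ the vector $A_j H_j$ has mean zero, so $\{A_j H_j\}_j$ is a Hilbert-valued martingale-difference sequence in $\mathfrak{g}$, and orthogonality yields
\[
\mathbb{E}\Bigl\|\textstyle\sum_j A_j H_j\Bigr\|^2 = \sum_j \mathbb{E}\|A_j H_j\|^2 \leq K^{-2}|g|_2^2,
\]
and hence $\mathbb{E}\|\sum_j A_j H_j\| \leq K^{-1}|g|_2$, producing the linear contribution in the claimed bound.

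Writing $P_K = \exp(W)$ and iterating BCH across the levels expresses $W$ as $\sum_j A_j H_j + R$, with $R$ a series of nested commutators $[A_{j_1} H_{j_1}, [A_{j_2} H_{j_2}, \dots]]$ indexed by increasing tuples $j_1 < \cdots < j_n$ of length $n \geq 2$. The main obstacle is to control $\mathbb{E}\|R\|$ by a constant times $(\log(\#\mathcal{P})^2/\log\log(\#\mathcal{P}))\,K^{-2}|g|_2^2$. Each such $n$-fold commutator is again a martingale difference when conditioning on the innermost index, and termwise $L^2$ estimates contribute $O(K^{-2}|g|_2^2)$ per order. However, summing the BCH series requires uniform control on the partial sums $\max_j \|\sum_{l \leq j} A_l H_l\|$; by Doob's maximal inequality combined with Burkholder--Rosenthal at exponent $p$, these lie in $L^p$ with norm $O(\sqrt{p\log(\#\mathcal{P})}\,K^{-1}|g|_2)$. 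Truncating BCH at the optimal order $n \asymp \log(\#\mathcal{P})/\log\log(\#\mathcal{P})$ and accepting a factor $n$ per nested layer produces the announced $\log(\#\mathcal{P})^2/\log\log(\#\mathcal{P})$ factor in the quadratic correction.
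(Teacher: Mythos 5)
Your first half is correct and coincides with the paper's own route: the block decomposition $g=\prod_j c_j\circ\phi_j(g)$, the identity $\omega_K(c_j\circ\phi_j(g))=\prod_i\exp_G(Y_{j,i})$ with $\sum_i|\alpha_{j,i}|\leq s_j$ from Corollary \ref{coro:bound}, the per-block linearization error $C s_j^2K^{-2}$ (this is Lemma \ref{le:approx1} in the paper), and the martingale $L^2$ bound $\mathbb{E}\|\sum_j A_jH_j\|^2\leq K^{-2}|g|_2^2$ (this is Lemma \ref{le:approxlie}). The gap is in the last, and hardest, step: bounding $\mathbb{E}[d_G(P_K,1)]$ for $P_K=\prod_j\exp_G(A_jH_j)$, which is where the factor $\log(\#\mathcal{P})^2/\log\log(\#\mathcal{P})$ has to come from.

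Writing $P_K=\exp_G(W)$ and expanding $W$ as a global BCH series over all $\#\mathcal{P}$ factors is not legitimate in this regime. The hypothesis controls only $|g|_2$, not $\sum_j s_j$, so $\sum_j\|A_jH_j\|$ can be as large as $\sqrt{\#\mathcal{P}}\,|g|_2/K\sim\sqrt{c\,\#\mathcal{P}}$; this is far outside the convergence domain of the BCH series, and $\log P_K$ need not even be well defined since the product can wander anywhere in the compact group. Truncating the series at order $n\asymp\log(\#\mathcal{P})/\log\log(\#\mathcal{P})$ therefore leaves a remainder for which you have no estimate: the tail is naturally bounded by powers of $\sum_j\|A_jH_j\|$, which is huge. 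Moreover the claim that each order contributes $O(K^{-2}|g|_2^2)$ is only justified (by the martingale orthogonality you invoke) at order two, where the term is $\tfrac12\sum_{j_1<j_2}[A_{j_1}H_{j_1},A_{j_2}H_{j_2}]$; at higher orders the sums over nested commutators come with growing combinatorial weight and involve factors like $\max_j\|\sum_{l\leq j}A_lH_l\|^{n-2}\sum_j\|A_jH_j\|$, and the ``factor $n$ per nested layer'' accounting is an ansatz, not a proof. This is precisely the obstruction the paper's proof is designed to bypass: it never expands a long product, but interpolates between $\prod_x\exp_G(X_x)$ and $\exp_G(\sum_x X_x)$ along a $\lceil\log\#\mathcal{P}\rceil$-ary tree of depth $D=\lceil\log(\#\mathcal{P})/\log\log(\#\mathcal{P})\rceil$, comparing consecutive levels using only the two-term inequality $d_G(\exp_G X\exp_G Y,\exp_G(X+Y))\leq C\|X\|\|Y\|$ (valid for \emph{all} $X,Y$ by compactness, no smallness needed) within sibling groups of size $\lceil\log\#\mathcal{P}\rceil$, and controlling each subtree sum in $L^2$ by the martingale lemma; the depth times the branching factor is exactly the announced prefactor. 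To make your argument work you would need either such a hierarchical grouping of the factors, or a genuine cancellation mechanism in the high-order BCH terms, neither of which your sketch supplies.
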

The proof of this is technical, and postponed to the next section.
Assuming it to hold, we now prove Proposition \ref{prop:ish}.
\begin{proof}[Proof of Proposition \ref{prop:ish}]
Let $G$ be a compact Lie group, endowed with a biinvariant metric $d_G$. Since the metric is biinvariant,
\begin{align*}
d_G(\omega_K([\bar{X}]), \omega_K\Big( \Big(\prod_{k=1}^{\# \mathcal{P}^0}  P_k\
M_k\ \Big) P_{k+1} )\Big)
&\leq \sum_{k=1}^{\# \mathcal{P}^0} d_G(P_kR^1_kM_kR^2_k, P_kM_k)\\
&\leq \sum_{k=1}^{\# \mathcal{P}^0} (d_G(\omega_K(R_K^1),1)+d(\omega_K(R_K^2),1))\\
&\leq
\sum_{k=1}^{\# \mathcal{P}^0} \sum_{i\neq i_k} |\alpha_{x_k,i}(g^1_{x_k})|
d_G(\omega_K(x_k),1)\\
&\leq \sum_{x\in \mathcal{P}^0} S^{(2)}(x,g^1_x ) K^{-1}.
\end{align*}
Since $\# \mathcal{P}^0\leq K^{\frac{1}{3}}$ and $S^{(2)}(x,g^1_x)\leq K^{\frac{1}{2}}$ for all $x\in \mathcal{P}^0$, we end up with
\[
d_G\Big(\omega_K([\bar{X}]), \omega_K\Big( \Big(\prod_{k=1}^{\# \mathcal{P}^0}  P_k\
M_k\ \Big) P_{k+1} )\Big)\leq K^{-\frac{1}{6}}\underset{K\to \infty}\longrightarrow 0.
\]
This proves the first relation of \eqref{eq:program}.

We now look at the second one.
Thanks to Lemma \ref{le:reduc}, the problem reduces to show that
\[\omega_K\Big(\prod_{k=1}^{\# \mathcal{P}^0+1} P_k \Big)\underset{K\to \infty}\longrightarrow 0.\]
We want to apply Proposition \ref{prop:tech}, for which we need to bound
\[
|\prod_{k=1}^{\# \mathcal{P}^0+1} P_k|_2^2=\sum_{x\in \mathcal{P}^1\sqcup \mathcal{P}^2\sqcup \mathcal{P}^3} S^{(1)}(x, g^1_x )^2.
\]

For $x\in \mathcal{P}_1$, we know that $\beta^1(x,g^1_x)\leq K^{\frac{2}{3}}$, hence that $S^{(1)}(x, g^1_x)\leq K^{\frac{2}{3}}+K^{\frac{1}{2}+\epsilon}$. Since we also know that
$\# \mathcal{P}^1\leq K^{\frac{1}{2}+\epsilon }$,
\[
  \sum_{x\in \mathcal{P}^1} S^{(1)}(x,g^1_x)^2\leq K^{\frac{1}{2}+\epsilon} ( K^{\frac{2}{3}}+K^{\frac{1}{2}+\epsilon})^2\sim K^{\frac{11}{6}+\epsilon}.
\]

For $x\in \mathcal{P}^2$, we know that $S^{(1)}(x,g^1_x)\leq 2 K^{\frac{1}{2}+\epsilon}$. Since we also know that
$\# \mathcal{P}^2\leq K^{5\epsilon }$,
\[
  \sum_{x\in \mathcal{P}^1} S^{(1)}(x,g^1_x)^2\leq 4 K^{1+7\epsilon}.
\]

For $x\in \mathcal{P}^3$, we know that $S^{(1)}(x, g^1_x)\leq 2 K^{\frac{1}{2}-\epsilon}$. Since we also know that
$\# \mathcal{P}^2\leq \# \mathcal{P} \leq K\log(K) $,
\[
  \sum_{x\in \mathcal{P}^1} S^{(1)}(x, g^1_x)^2\leq 4 \log(K) K^{2-2\epsilon}.
\]

Combining this three results together, we obtain, for $K$ large enough,
\[
|\prod_{k=1}^{\# \mathcal{P}^0+1} P_k|_2^2   \leq  K^{2-\epsilon}.
\]
This is enough to apply Proposition \ref{prop:tech}, and to deduce that
\[
\mathbb{E}\Big[ d_G(\omega_K\Big(
\prod_{k=1}^{\# \mathcal{P}^0+1} P_k\Big),1)\Big]\leq K^{-\frac{\epsilon}{2}}+\log(K)^2 K^{-\epsilon}  \underset{K \to \infty}\longrightarrow 0.
\]
It follows that $\omega_K\Big(
\prod_{k=1}^{\# \mathcal{P}^0+1} P_k\Big)$ converges to $0$ in probability, which proves the second relation.

The two last relations are proved in a similar, but easier, way. First,
\begin{align*}
d_G\Big( \prod_{k=1}^{\# \mathcal{P}^0} x_k^{\beta_1(x,g^1_{x_k})}, \prod_{k=1}^{\# \mathcal{P}^0} x_k^{\theta(x,\bar{X})}  \Big)
&\leq \sum_{k=1}^{\# \mathcal{P}^0} |\theta(x,\bar{X})-\beta_1(x,g^1_{x_k})| d_G(x_k,1)\\
&\leq (\# \mathcal{P}^0) K^{-1} \max_{x\in \mathcal{P}_0}(|\theta(x,\bar{X})-\beta_1(x,g^1_{x_k})| )\\
&\leq K^{\frac{1}{3}} K^{-1} K^{\frac{1}{2}-\epsilon}\underset{K\to \infty}\to 0.
\end{align*}
The last inequality follows from the events $G_R$ (recall the first bound in Proposition \ref{prop:threeBounds}) and $H_R$ (recall Lemma \ref{le:card}). This proves the third relation.

For the fourth one, we apply Lemma \ref{le:reduc} once more, and the problem reduces to show that the product $\prod_{x\in \mathcal{P}_K\setminus \mathcal{P}^0} \omega_K(x)^{\theta(x,\bar{X})}$ converges towards $0$. Since \[\sum_{x\in \mathcal{P}_K\setminus \mathcal{P}^0} \theta(x,\bar{X})^2\leq \sum_{x\in \mathcal{P}_K\setminus \mathcal{P}^0} S^{(1)}(x,\bar{X})^2\leq K^{2-\epsilon},\]
we can use Proposition \ref{prop:tech} to conclude. Remark that the situation here is actually much easier than for the second relation: the letters here appear on the right order (so that we could avoid the use of Proposition \ref{prop:tech}).
\end{proof}
This concludes the proof of the main theorem, up to the proof of Proposition \ref{prop:tech} that we postponed.

\section{An inequality for words in random matrices}
\label{sec:tech}

This section is devoted to the proof of the last piece missing, Proposition \ref{prop:tech}. Let us first recall that
 \[
  |g|_2^2=\sum_{x \in  \mathcal{P}} S^{(1)}(x,\pi^{\leq x}(g) )^2.
 \]
 and that Proposition \ref{prop:tech} states the following.
\begin{proposition*}
  Let $G$ be a compact Lie group, endowed with a biinvariant metric. Let $(H_i)_{i\in \mathcal{P}}$ a family of i.i.d. $\Ad$-invariant and symmetric $\mathfrak{g}$-valued random variables, with support on the ball of radius $K^{-1}$, and let
  $\omega_K: \mathbb{F}_{ \mathcal{P}} \to G$ be the random group morphism determined by $\omega_K(e_i)=\exp_G( H_i)$.

  There exists $C,c>0$ such that for any totally ordered finite set $\mathcal{P}$ with $\# \mathcal{P}\geq 4$, for all $K \geq 1$,  and all
  $g\in \mathbb{F}_{ \mathcal{P}}$ with
  $ |g|_2^2\leq cK^2$,
  \[
  \mathbb{E}[
  d_G(\omega_K(g),1)] \leq C \big( K^{-1}|g|_2+
    \frac{\log(\# \mathcal{P})^2}{\log (\log (\# \mathcal{P} ) ) } K^{-2} |g|_2^2 \big).
  \]
\end{proposition*}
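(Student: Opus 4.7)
The plan is to exploit the semi-direct product decomposition from Section~\ref{sec:free}. Writing $\mathcal{P}=\{x_1<\cdots<x_n\}$ with $n=\#\mathcal{P}$, we have $g=h_1\cdots h_n$ with $h_j=c_j\circ\phi_j(g)$ belonging to the normal closure of $x_j$ in $\mathbb{F}_{\{x_1,\ldots,x_j\}}$. In canonical conjugate form, $h_j=\prod_l u_{j,l}\,x_j^{\beta_{j,l}}\,u_{j,l}^{-1}$ with each $u_{j,l}\in\mathbb{F}_{\{x_1,\ldots,x_{j-1}\}}$. Applying $\omega_K$, this becomes
\[
V_j:=\omega_K(h_j)=\prod_l \exp_G\!\big(\beta_{j,l}\,\Ad_{c_{j,l}}(H_j)\big),\qquad c_{j,l}:=\omega_K(u_{j,l}),
\]
where each $c_{j,l}$ is measurable with respect to $\mathcal{F}_{j-1}:=\sigma(H_1,\ldots,H_{j-1})$, while $H_j$ is symmetric, $\Ad$-invariant and independent of $\mathcal{F}_{j-1}$. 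Corollary~\ref{coro:bound} gives $\sum_l|\beta_{j,l}|\leq S^{(1)}(x_j,\pi^{\leq x_j}(g))$, so the smallness hypothesis $|g|_2^2\leq cK^2$ makes $\sum_l|\beta_{j,l}|\,K^{-1}$ uniformly small, ensuring that Baker--Campbell--Hausdorff converges on each $V_j$.

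I would then write $V_j=\exp_G(L_j+E_j)$ with linear part $L_j:=\sum_l \beta_{j,l}\,\Ad_{c_{j,l}}(H_j)$ and $E_j$ a sum of iterated brackets. The symmetry of $H_j$ yields $\mathbb{E}[L_j\mid\mathcal{F}_{j-1}]=0$, while bi-invariance of the norm makes the conjugation factors drop out of a variance calculation, giving
\[
\mathbb{E}\big[\|L_j\|^2 \mid \mathcal{F}_{j-1}\big] \leq \Big(\sum_l|\beta_{j,l}|\Big)^2 K^{-2} \leq S^{(1)}(x_j,\pi^{\leq x_j}(g))^2 K^{-2}.
\]
Iterating BCH on $\omega_K(g)=V_1\cdots V_n$ produces $\log_G\omega_K(g)=\sum_j\Ad_{M_{j-1}}(L_j)+R$, with $M_{j-1}:=V_1\cdots V_{j-1}$ and $R$ collecting all higher-order brackets (including the $E_j$). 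Since $\Ad_{M_{j-1}}(L_j)$ is a $\mathfrak{g}$-valued martingale difference adapted to $(\mathcal{F}_j)_j$, orthogonality of martingale differences, combined with bi-invariance, gives
\[
\mathbb{E}\Big\|\sum_{j=1}^n \Ad_{M_{j-1}}(L_j)\Big\|^2 = \sum_j \mathbb{E}\|L_j\|^2 \leq C K^{-2}|g|_2^2,
\]
and Jensen's inequality converts this into the linear $K^{-1}|g|_2$ contribution to the conclusion.

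The real work lies in the remainder $R$, which has to produce the quadratic $K^{-2}|g|_2^2$ term together with the specific prefactor $\log(n)^2/\log\log n$. I expect this factor to arise from a dyadic stratification of the indices $j$ according to the scale of $S^{(1)}(x_j,\pi^{\leq x_j}(g))/K$, yielding $O(\log n)$ strata, combined with a truncation of Dynkin's explicit BCH series at a depth $d\asymp \log n/\log\log n$ optimized against the $1/d!$ Stirling tail. Inside each stratum, the variance carried by brackets of depth $\leq d$ is controlled by a variant of the martingale estimate above, while the tail beyond depth $d$ is negligible by the smallness hypothesis. Summing over strata and depths, all while preserving the martingale adaptation through the truncation, yields the announced prefactor; a naive bracket-by-bracket estimate would lose the logarithmic dependence in favour of a power of $n$.
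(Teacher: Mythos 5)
Your treatment of the linear part is sound and essentially reproduces the paper's Lemma \ref{le:approxlie}: the terms $\Ad_{M_{j-1}}(L_j)$ are martingale differences (by $\mathcal{F}_{j-1}$-measurability of the conjugators and exponents, symmetry of $H_j$, and bi-invariance of the norm), and orthogonality plus Corollary \ref{coro:bound} gives the $K^{-1}|g|_2$ term. The genuine gap is in the remainder $R$, which is exactly where the whole difficulty of the proposition sits, and your plan for it is not a proof. First, the identity $\log_G\omega_K(g)=\sum_j\Ad_{M_{j-1}}(L_j)+R$ is not available as stated: each $V_j$ is close to $1$, but the product of $n=\#\mathcal{P}$ such factors has total ``size'' of order $K^{-1}\sum_j S^{(1)}(x_j,\pi^{\leq x_j}(g))$, which under the hypothesis $|g|_2^2\leq cK^2$ can be as large as $\sqrt{cn}$; so $\omega_K(g)$ need not lie in the injectivity radius of $\exp_G$ and the iterated BCH series for the full product does not converge. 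Second, even granting a local expansion, the crux is quantitative: if you bound the second-order cross terms by norms, you get $\sum_{i<j}\|X_i\|\,\|X_j\|$, a sum of \emph{nonnegative} quantities with no cancellation, and Cauchy--Schwarz then yields at best $K^{-2}\big(\sum_j S_j\big)^2\leq \#\mathcal{P}\cdot K^{-2}|g|_2^2$ --- a factor $\#\mathcal{P}$, not $\log(\#\mathcal{P})^2/\log\log(\#\mathcal{P})$. Your proposed fix (dyadic stratification of the $S_j/K$ plus truncating Dynkin's series at depth $d\asymp\log n/\log\log n$ against the $1/d!$ tail, with ``a variant of the martingale estimate'' for brackets of depth $\leq d$) misdiagnoses where the loss occurs: the factorial decay of high-order BCH coefficients is not the obstruction, the obstruction is already the \emph{number} of order-two cross terms, and the claimed martingale variant that would tame them is precisely the step you do not supply (the brackets $[\,\cdot\,,\Ad_c(H_j)]$ do have conditional mean zero, but organizing them so that the resulting $L^2$ bound closes with only a polylogarithmic loss is nontrivial and is not sketched).

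For comparison, the paper never takes a global logarithm and never expands the BCH series beyond the crude distance estimate of Lemma \ref{le:approx1}; instead it runs a divide-and-conquer scheme: the letters are grouped along a $\lceil\log\#\mathcal{P}\rceil$-ary rooted tree of depth $D=\lceil\log\#\mathcal{P}/\log\log\#\mathcal{P}\rceil$, each internal node $w$ carries $X_w=\sum_{x\in l(w)}X_x$, and consecutive levels are compared using bi-invariance of $d_G$ together with Lemma \ref{le:approx1} applied only within each sibling block; the Lie-algebra $L^2$ estimate (Lemma \ref{le:approxlie}) is applied at \emph{every internal node} to bound $\mathbb{E}\|X_w\|^2$ by $K^{-2}\sum_{x\in l(w)}S^{(1)}(x,\pi^{\leq x}(g))^2$, so each level costs only $\lceil\log\#\mathcal{P}\rceil\,K^{-2}|g|_2^2$ and the $D$ levels give the announced prefactor (branching factor times depth). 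If you want to salvage your route, you would need either to reproduce this hierarchical grouping or to carry out honestly the second-order martingale analysis you allude to; as written, the statement is not proved.
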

\begin{remark}
We think the second term can actually be omitted. This is possible when the group is commutative, and the proposition then follows from a basic analysis of the variance.

When the group is not commutative,
it is possibly simpler to prove that
\[  \mathbb{E}[
  d_G(\omega_K(g),1)] \leq C K^{-1}|g|'_2,
  \]
where ${|g|'}_2^2=\sum_{x\in \mathcal{P}} S^{(1)}(x,g)^2$,
  Such a bound would not be sufficient for our purpose, precisely because we know no bound on $S^{(1)}(x,g)$ but only on $S^{(1)}(x,\pi^{\leq x} (g))$.
\end{remark}

We will need a few times to compare the exponential of sums in $\mathfrak{g}$ with the product in $G$ of the exponentials.

First, let is remark that, from the Dynkin's formula, we get, for $X,Y\in \mathfrak{g}$,
\[\ln(\exp(sX)\exp(tY))\underset{s,t\to 0}{=} sX+tY+\frac{st}{2} [X,Y]+O(st(s+t) ),\]
or stated otherwise,
\[\ln(\exp(X)\exp(Y))-X-Y \underset{\|X\|,\|Y\|\to 0}{=} \frac{1}{2} [X,Y]+O(\|X\|\|Y\|(\|X\|+\|Y\|)),\]
Since $G$ is compact, it follows that there exists a constant $C$ such for all $X,Y\in \mathfrak{g}$,
\[
d_G(\exp_G(X)\exp_G(Y), \exp_G(X+Y))\leq C \|X\| \|Y\|.
\]

\begin{lemma}
\label{le:approx1}
  There exists a constant $C$ such that all positive integer $n$, for all $X_1,\dots, X_n\in \mathfrak{g}$,
  \[
  d_G\Big( \prod_{i=1}^n \exp_G(X_i), \exp_G \big(\sum_{i=1}^n X_i\big)\Big)\leq C \Big(\sum_{i=1}^N \|X_i\| \Big)^2 .
  \]
\end{lemma}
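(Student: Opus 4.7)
The plan is to proceed by induction on $n$, using as the single analytic input the inequality stated immediately before the lemma, namely that there exists a constant $C_0$ such that for all $X,Y\in \mathfrak g$,
\[
d_G(\exp_G(X)\exp_G(Y), \exp_G(X+Y))\leq C_0 \|X\|\|Y\|.
\]
(This inequality holds for small $X,Y$ thanks to the Dynkin expansion displayed just above, and extends to arbitrary $X,Y$ because $G$ is compact and has bounded diameter, so the left-hand side stays bounded while $\|X\|\|Y\|$ is bounded below on the complement of any small neighbourhood of $0$.)

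Set $P_k=\prod_{i=1}^k \exp_G(X_i)$, $S_k=\sum_{i=1}^k X_i$, and $T_k=\sum_{i=1}^k \|X_i\|$. The base case $n=1$ is trivial since $P_1=\exp_G(X_1)=\exp_G(S_1)$. For the inductive step, I would write
\[
d_G(P_n,\exp_G(S_n))\le d_G\bigl(P_{n-1}\exp_G(X_n),\,\exp_G(S_{n-1})\exp_G(X_n)\bigr)+d_G\bigl(\exp_G(S_{n-1})\exp_G(X_n),\,\exp_G(S_n)\bigr).
\]
By right-invariance of $d_G$, the first term equals $d_G(P_{n-1},\exp_G(S_{n-1}))$, which by induction is at most $C_0 T_{n-1}^2$. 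The second term is bounded by $C_0\|S_{n-1}\|\|X_n\|\le C_0 T_{n-1}\|X_n\|$ thanks to the preliminary inequality above.

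Adding these, the proof concludes since
\[
T_{n-1}^2+T_{n-1}\|X_n\|\le T_{n-1}^2+2T_{n-1}\|X_n\|+\|X_n\|^2=(T_{n-1}+\|X_n\|)^2=T_n^2,
\]
so with $C=C_0$ the inductive bound $d_G(P_n,\exp_G(S_n))\le C\,T_n^2$ is preserved. The only step that requires any care is the justification of the preliminary BCH-type estimate with a constant uniform in $X,Y\in \mathfrak g$; this is where the compactness of $G$ enters, and splitting according to whether $\|X\|+\|Y\|$ is small (use Dynkin) or bounded away from $0$ (use $\mathrm{diam}(G)<\infty$) settles it.
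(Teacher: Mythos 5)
Your proof is correct and is essentially the paper's argument: your induction, once unrolled, is exactly the paper's telescoping over the prefix sums $S_j$, with each step controlled by the same two-term estimate $d_G(\exp_G(X)\exp_G(Y),\exp_G(X+Y))\leq C\|X\|\|Y\|$ together with the (bi-)invariance of $d_G$, and the bound $\sum_j \|S_{j-1}\|\,\|X_j\|\leq \big(\sum_i\|X_i\|\big)^2$. One small caveat: your parenthetical justification of the global two-term bound is not quite right as stated ($\|X\|\,\|Y\|$ is \emph{not} bounded below off a neighbourhood of $0$ when only one of $X,Y$ is small), but this is harmless, since when $\sum_i\|X_i\|\geq 1$ the lemma is trivial from the finite diameter of $G$, so one may assume all the quantities appearing in the induction are small and invoke only the Dynkin-regime estimate, which is also the level of justification the paper itself gives.
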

\begin{proof}
  For $j\in \{0,\dots, n\}$, let $Y_j= \sum_{i=1}^j X_i$ and $z_j=\prod_{i=n-j+1}^n \exp_G(X_i)$. We are thus trying to bound $d(z_n,\exp_G(Y_n))$. Remark that
  $z_{n-j}=\exp_G(X_{j+1})z_{n-j-1}$ and $Y_{j+1}=X_{j+1}+Y_{j}$.
  Applying the triangle inequality between the points $z_n=\exp_G(Y_1) z_{n-1}, \dots, \exp_G(Y_{n-1}) z_{1}, \exp_G(Y_n)$, we obtain
  \begin{align*}
  d(z_n,\exp_G(Y_n))&\leq \sum_{j=1}^{n-1} d_G\big(\exp_G(Y_j) z_{n-j} ,\exp_G(Y_{j+1} ) z_{n-j-1}     \big)\\&= \sum_{j=1}^{n-1} d_G\big(\exp_G(Y_j)\exp_G(X_{j+1})z_{n-j-1} ,\exp_G(Y_{j+1} )   z_{n-j-1}    \big)\\
  &=\sum_{j=1}^{n-1} d_G\big(\exp_G(Y_j)\exp_G(X_{j+1}),\exp_G(Y_{j}+X_{j+1} )   \big) \\ &\leq
  \sum_{1\leq i<j \leq n} C \|X_i\|\|X_{j}\|\leq \frac{C}{2} \big(\sum_{j=1}^{n}\|X_{j}\|\big)^2.
  \end{align*}
\end{proof}

Under the assumptions on $g$ in Proposition \ref{prop:tech}, for all $x\in \mathcal{P}$, $d_G(\omega_K(c_x(\phi_x(g)),1)<c$, so that the logarithm of $\omega_K(c_x(\phi_x(g))$ is well-defined, as an element of $\mathfrak{g}$, provided that $c$ is small enough. During this section, we will call this logarithm $X_x(g)$, so that $\exp_G(X_x(g))=\omega_K(c_x(\phi_x(g))$.

\begin{lemma}
\label{le:approxlie}
  Let $G$ be a compact Lie group, endowed with a biinvariant metric. Let $(H_x)_{x\in \mathcal{P}}$ a family of $\Ad$-invariant and symmetric $\mathfrak{g}$-valued random variables, with support on the ball of radius $K^{-1}$, and let
  $\omega_K: \mathbb{F}_{ \mathcal{P}} \to G$ be the random group morphism determined by $\omega_K(e_i)=\exp_G( H_i)$.
  Assume further that for all $x\in \mathcal{P}$, $H_x$ is independent from $(H_y)_{y<x}$ conditionally on the conjugacy class ${\overline{H}_x}$ of $H_x$ in $G$.

  There exists $c>0$ and a constant $C$ such that for any totally ordered finite set $\mathcal{P}$ with $\# \mathcal{P}\geq 16$, for all $K \geq 1$,  and all
  $g\in \mathbb{F}_{ \mathcal{P}}$ with
  $  |g|_2\leq c  K $,

  \[
  \mathbb{E}\Big[  \Big\| \sum_{x\in \mathcal{P}} X_x(g) \Big\|^2 \Big] \leq  C K^{-2} |g|_2^2 .
  \]
\end{lemma}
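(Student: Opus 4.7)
The plan is a martingale decomposition of $\sum_x X_x(g)$ with respect to the filtration $\mathcal{F}_{<x}:=\sigma((H_y)_{y<x})$, combined with a first-order Taylor expansion of each $X_x$ in $H_x$. By Lemma~\ref{le:decompo} together with Corollary~\ref{coro:bound}, one may write $c_x\circ\phi_x(g)=\prod_{k}w_{x,k}\, x^{\alpha_{x,k}}\, w_{x,k}^{-1}$ with each $w_{x,k}\in\mathbb{F}_{\mathcal{P}_{<x}}$ (so $\omega_K(w_{x,k})$ is $\mathcal{F}_{<x}$-measurable) and $\sum_k|\alpha_{x,k}|\leq S^{(1)}(x,\pi^{\leq x}g)$. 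Taking $c$ small enough ensures that $\omega_K(c_x\circ\phi_x(g))=\prod_k \exp\bigl(\alpha_{x,k}\,\mathrm{Ad}(\omega_K(w_{x,k}))H_x\bigr)$ lies in a fixed small neighbourhood of $1$, so $X_x=\log\omega_K(c_x\circ\phi_x(g))\in\mathfrak{g}$ is well defined, and Lemma~\ref{le:approx1} yields the splitting
\[X_x= Y_x+R_x,\qquad Y_x:=\sum_{k}\alpha_{x,k}\,\mathrm{Ad}(\omega_K(w_{x,k}))\,H_x,\qquad \|R_x\|\leq CK^{-2}S^{(1)}(x,\pi^{\leq x}g)^2,\]
with $\|Y_x\|\leq K^{-1}S^{(1)}(x,\pi^{\leq x}g)$; in particular $\mathbb{E}[\|X_x\|^2]\leq CK^{-2}S^{(1)}(x,\pi^{\leq x}g)^2$.

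Now decompose $\sum_x X_x=\mathsf{M}+\mathsf{B}$, where $\mathsf{M}=\sum_x M_x$ with $M_x:=X_x-\mathbb{E}[X_x\mid\mathcal{F}_{<x}]$, and $\mathsf{B}=\sum_x\mathbb{E}[X_x\mid\mathcal{F}_{<x}]$. The $M_x$'s are martingale differences for $(\mathcal{F}_{<x})_x$ and are therefore orthogonal in $L^2(\mathfrak{g})$, giving
\[\mathbb{E}[\|\mathsf{M}\|^2]=\sum_x\mathbb{E}[\|M_x\|^2]\leq\sum_x\mathbb{E}[\|X_x\|^2]\leq CK^{-2}|g|_2^2.\]
The crucial observation (discussed below) is that $\mathbb{E}[H_x\mid\mathcal{F}_{<x}]=0$; granting this, the linear term is $\mathcal{F}_{<x}$-centered, $\mathbb{E}[Y_x\mid\mathcal{F}_{<x}]=\sum_k\alpha_{x,k}\mathrm{Ad}(\omega_K(w_{x,k}))\mathbb{E}[H_x\mid\mathcal{F}_{<x}]=0$, so only the quadratic remainder contributes and $\|\mathbb{E}[X_x\mid\mathcal{F}_{<x}]\|\leq \mathbb{E}[\|R_x\|\mid\mathcal{F}_{<x}]\leq CK^{-2}S^{(1)}(x,\pi^{\leq x}g)^2$. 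Summing and using $|g|_2\leq cK$ yields
\[\|\mathsf{B}\|\leq CK^{-2}|g|_2^2,\qquad \|\mathsf{B}\|^2\leq C^2K^{-4}|g|_2^4\leq C^2c^2 K^{-2}|g|_2^2,\]
and finally $\mathbb{E}\bigl[\bigl\|\sum_x X_x\bigr\|^2\bigr]\leq 2\mathbb{E}[\|\mathsf{M}\|^2]+2\|\mathsf{B}\|^2\leq C'K^{-2}|g|_2^2$, as required.

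The main obstacle is the vanishing $\mathbb{E}[H_x\mid\mathcal{F}_{<x}]=0$, which is the only step genuinely using all three hypotheses on $(H_x)$. The $\mathrm{Ad}$-invariance gives that $H_x$ conditioned on its conjugacy class $\overline{H}_x$ is Haar-uniform on the corresponding orbit, so $\mathbb{E}[H_x\mid\overline{H}_x]$ equals the projection $PH_x$ of $H_x$ onto the centre $\mathfrak{z}$ of $\mathfrak{g}$; the conditional-independence assumption then gives $\mathbb{E}[H_x\mid\mathcal{F}_{<x}]=\mathbb{E}[PH_x\mid\mathcal{F}_{<x}]\in\mathfrak{z}$. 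To conclude that this quantity is zero, one combines the symmetry $H_x\stackrel{(d)}{=}-H_x$ with the conditional-independence structure: the reflection $H_x\mapsto -H_x$ preserves the conditional law of $H_x$ given $\overline{H}_x$ (translated to $-\overline{H}_x$) while leaving $(H_y)_{y<x}$ untouched, and this promotes the marginal symmetry to the joint symmetry $(H_x,\mathcal{F}_{<x})\stackrel{(d)}{=}(-H_x,\mathcal{F}_{<x})$; in particular $\mathbb{E}[PH_x\mid\mathcal{F}_{<x}]$ equals its own negative, hence vanishes. Once this point is settled, the remaining estimates are routine.
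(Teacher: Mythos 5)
Your proof is correct and follows essentially the same route as the paper's: linearize each $X_x(g)$ into $\sum_k \alpha_{x,k}\,\mathrm{Ad}(\omega_K(w_{x,k}))H_x$ via Lemma~\ref{le:approx1}, observe that this linear part is conditionally centered given the past (the paper states this as $\mathbb{E}[\mathrm{Ad}_{w_{x,k}}(H_x)\mid\cdots]=\mathbb{E}[H_x]=0$, so the partial sums of the $\tilde X_x$ form a martingale), use $L^2$ orthogonality of the increments, and absorb the quadratic remainder $O(K^{-2}S^{(1)}(x,\pi^{\leq x}g)^2)$ using $|g|_2\leq cK$. The only cosmetic difference is that you take the Doob decomposition of $\sum_x X_x$ directly (martingale part plus compensator) whereas the paper uses the linearized $\tilde X_x$ as the martingale increments and treats $X_x-\tilde X_x$ as an additive error, and your discussion of why $\mathbb{E}[H_x\mid\mathcal{F}_{<x}]=0$ is if anything more detailed than the paper's one-line justification.
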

\begin{proof}
  Let us recall that $c_x\circ \phi_x(g)$ is a product of $S^{(1)}(x,g)$ commutators of $x$ and $x^{-1}$:
  \[ c_x \phi_x(g)=\prod_{k=1}^{S^{(1)}(x,g) } w_{x,k} x^{\epsilon_{x,k}} w_{x,k},\]
   with $\epsilon_{x,k}\in \{-1,1\}$ and $w_{x,k}\in \mathbb{F}_{{ \{y:y<x\} }}$.
  Set
  \[ \tilde{X}_x(g)= \sum_k \Ad{\omega_K(w_{x,k}) }( H_x^{\epsilon_{x,k}} ).\]
  Lemma \ref{le:approx1} ensures that
  \[ d_G( \exp_G(X_x(g)), \exp_G(\tilde{X}_x(g)))\leq C K^{-2}S^{(1)}(x,\pi^{\leq x}(g))^2.\]
Since $X_x(g)$ and $\tilde{X}_x(g)$ are both smaller than $K^{-1}S^{(1)}(x,\pi^{\leq x}(g))$, hence smaller than $c$ that can be chosen arbitrary small, and since the Jacobian determinant of the exponential at $0$ is $1$, we can assume
  \[ \|X_x(g)-\tilde{X}_x(g)\|\leq 2 d_G( \exp_G(X_x(g)), \exp_G(\tilde{X}_x(g)))\leq C'K^{-2}S^{(1)}(x,\pi^{\leq x}(g))^2.\]

  From triangle inequalities,
  \begin{align}
  \mathbb{E}\Big[ \big\| \sum_{i} X_x(g) \big\|^2 \Big]
  &\leq 2 \mathbb{E}\Big[ \big\| \sum_{x} \tilde{X}_x(g) \big\|^2 \Big]+ 2\mathbb{E}\Big[  \big(\sum_{x} \big\|X_x(g)-\tilde{X}_x(g) \big\|\big)^2 \Big] \nonumber\\
  &\leq 2\mathbb{E}\Big[ \Big\| \sum_{x} \tilde{X}_x(g) \Big\|^2 \Big]+  C K^{-4} |g|_2^{4}. \nonumber 
  \end{align}

  In order to control the remaining term, let us remark that, since $H_x$ is symmetric and since it is independent from the collection of variables $( (\omega_K(w_{x,k}))_k, (\tilde{X}_y)_{y<x} )$ conditionally on $\overline{H^x}$,
  \[ \mathbb{E}\big[\Ad_{ w_{x,k}  }(H_x) \big|   (\omega_K(w_{x,l}))_{l<k}, (\tilde{X}_y(g))_{y<x}  \big]=\mathbb{E}[H_x]=0.\]
  Therefore, $\mathbb{E}\big[\tilde{X}_x(g) \big|(\tilde{X}_y(g))_{y<x}\big]=0$ and the finite sequence $(S_x= \sum_{y<x} \tilde{X}_y)_{x\in \mathcal{P}}$ is a martingale.
  Let $(S^\alpha_x)_{\alpha}$ be the components of $S_x$ in some orthonormal basis $(e_\alpha)_{\alpha}$ of $\mathfrak{g}$. Then, for each $\alpha$, $(S^\alpha_x)_x$
  is a martingale as well, whose $i^{\mbox{\scriptsize th}}$ steps $\tilde{X}^\alpha_{x_i}$ is supported on \[[-K^{-1} S^{(1)}(x_i,\pi^{\leq x_i}(g) ), K^{-1}S^{(1)}(x_i,\pi^{\leq x_i}(g))].\]
  Thus
  \[
  \mathbb{E}[ (S^\alpha_x)^2 ]= \sum_{y<x} \mathbb{E}[  (\tilde{X}^\alpha_x)^2  ] \leq C K^{-2} |g|_2^2 .
  \]
We sum over $\alpha\in \{1,\dots, \dim(\mathfrak{g}) \}$ to conclude.
\end{proof}

We now intend to prove Proposition \ref{prop:tech}. We invite the reader to look at the similarity between Lemma \ref{le:approxlie} that we have just proved and  Proposition \ref{prop:tech}: they differ by the fact that the former take place in the Lie algebra whilst the latter take place in the Lie group. Some logarithmic (in the size of $\mathcal{P}$) corrections also appears in the Proposition. We think the same result does hold without these correction, but we were not able to prove it.
\begin{proof}[Proof of Proposition \ref{prop:tech}]

Let us first have a naive approach of the problem. We want to show that $\mathbb{E}[d_G(\omega_K(g),1)^2]$ is small. From Lemma \ref{le:approxlie}, we already know that
\[
\mathbb{E}\Big[ d_G\big(\exp_G\big(\sum_{x\in \mathcal{P}} X_x(g)\big), 1\big)^2 \Big]=\mathbb{E}\Big[ \big\|\sum_{x\in \mathcal{P}} X_x(g)\big\|^2 \Big] \leq CK^{-2}|g|^2_2.
\]
Besides, Lemma \ref{le:approx1} gives
\begin{equation}
\label{eq:temp4}
d_G\Big(\omega_K(g),\exp_G\big(\sum_{x\in \mathcal{P}} X_x(g) \big)\Big)\leq C \Big(\sum_{x\in \mathcal{P}} \| X_x(g)\| \Big)^2.
\end{equation}
The main problem is that we would like the square to be \emph{under} the sum, and the best we can get seems to be\footnote{Actually, $\# \mathcal{P}$ can easily be replaced with $\sqrt{\# \mathcal{P}}$ in Equation \eqref{eq:temp5}. It suffices to remark that we can replace
$\Big(\sum_{x\in \mathcal{P}} \| X_x(g)\| \Big)^2$ with $\Big(\sum_{x\in \mathcal{P}} \| X_x(g)\| \Big) \max_x \sum_{y<x} \| X_y(g)\| $ in  Lemma \ref{le:approxlie}, and get a maximal inequality in  Lemma \ref{le:approx1}.
}
\begin{equation}
\label{eq:temp5}
\mathbb{E}\Big[\Big(\sum_{x\in \mathcal{P}} \| X_x(g)\| \Big)^2\Big]\leq (\# \mathcal{P}) \sum_{x\in \mathcal{P}} \mathbb{E}\Big[\| X_x(g)\|^2\Big].
\end{equation}


We will now try to reduce this factor $(\# \mathcal{P})$.
To this end, we use a \emph{divide and conquer} algorithm.
We split the alphabet
$\mathcal{P}$ into $M$ subsets of size $\#\mathcal{P}/M$. The choice $M\simeq \log(\#\mathcal{P})$ gives a nearly optimal result. Each of the subsets is then recursively split in a similar way, until each subsets contains a single element.

Therefore, we consider a $\lceil \log(\#\mathcal{P}) \rceil$-regular rooted tree $\mathbb{T}$ of depth $D= \lceil \frac{ \log(\#\mathcal{P})}{\log(\log (\#\mathcal{P} ))}\rceil$  (see Figure \ref{fig:tree} below).
\begin{figure}[h]
\includegraphics[scale=0.8]{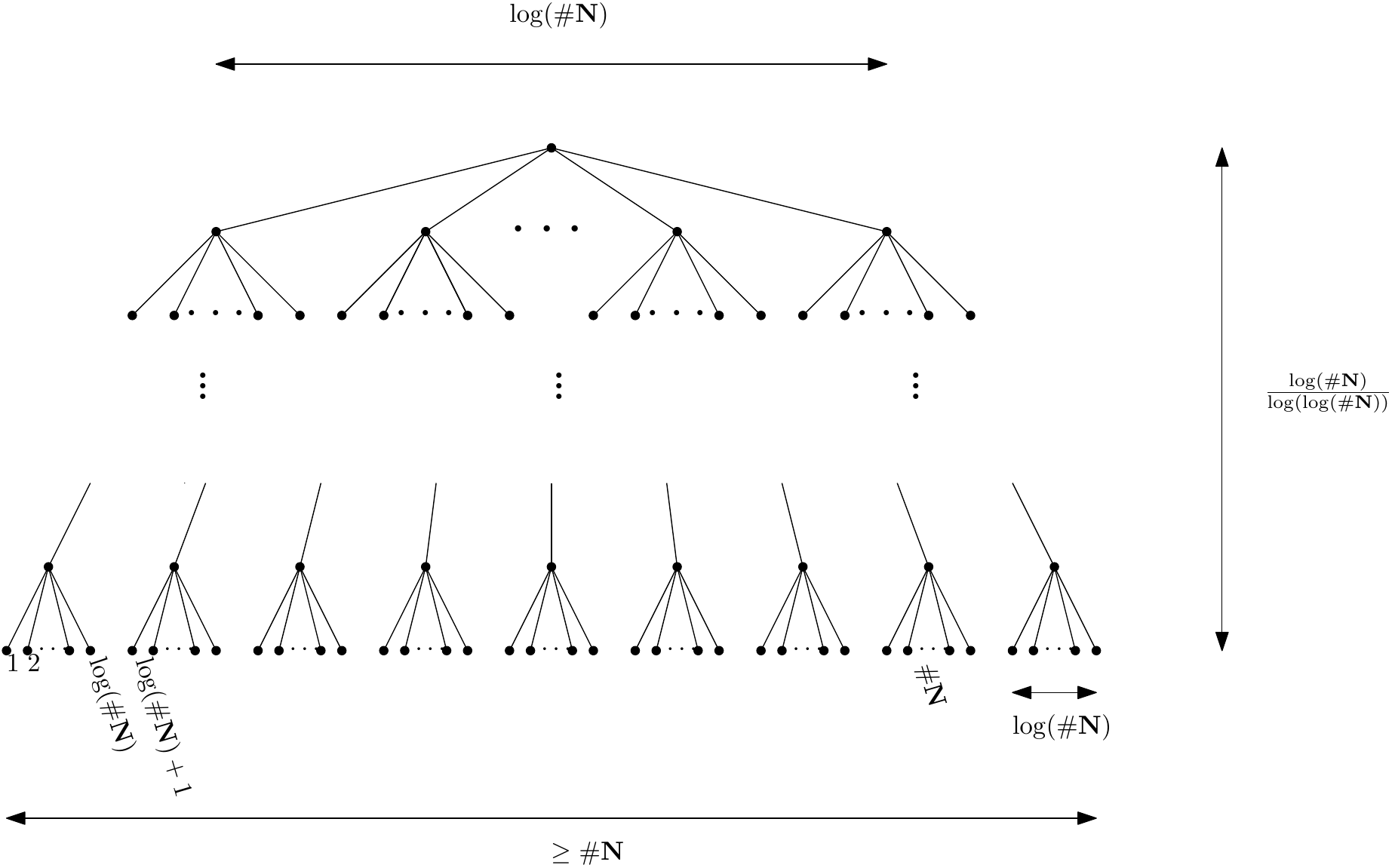}
\caption{\label{fig:tree} The tree $\mathbb{T}$.}
\end{figure}
Since $\#\mathcal{P}=\log(\#\mathcal{P})^{\frac{\log(\#\mathcal{P})}{\log(\log(\#\mathcal{P}))}}\leq \lceil \log(\#\mathcal{P}) \rceil^D$, the tree $\mathbb{T}$ has more than $\#\mathcal{P}$ leafs. We
fix some depth-first traversal of $\mathbb{T}$, and we enumerate the leaf accordingly .
We label the $i^{\mbox{\scriptsize th}}$ leafs by $X_{x_i}$ if $i\leq  \# \mathcal{P}$, and by $0$ otherwise. Then, we inductively label all the internal vertices, from the bottom to the root by the sum of the labels of all its children:
for any internal vertex $v$, setting $\sigma(v)$ the set of the children of $v$ and $l(v)$ the set of leafs descendant from $v$ ,
\[X_v=\sum_{w\in \sigma(i)} X_w=\sum_{l\in l(i)} X_l.\]
 In particular, the root $r$ is labelled by $X_r=\sum_{x\in \mathcal{P}} X_x$.


For $k\in \{1,\dots ,D \}$, we define $\mathbb{T}_k=\{ v: d_{\mathbb{T}}(v,r)=k-1\}$ the set of vertices at depth exactly $k$, which is naturally ordered,
and $g^k= \prod_{v\in \mathbb{T}_k} \exp_G(X_v) $. In particular, $g^1=\exp_G\big( \sum_{x\in \mathcal{P}} X_x\big)$ and $g^D= \exp_G(X_{x_1})\dots \exp_G(X_{x_{\# \mathcal{P}}} )$.

We already know with Lemma \ref{le:approxlie} that $g^1$ is small,
\[ \mathbb{E}[d(g^1,1)^2]\leq \mathbb{E}[\|\sum_{x\in \mathcal{P}} X_x \|^2 ]\leq C K^{-2} |g|_2^2 ,\]
and our goal is to show that $g^D$ is small as well. It thus suffices to show that $d_G(g^i,g^{i+1})$ is small for all $i$.

Since $d_G$ is biinvariant, it satisfies the property that $d_G(ab,cd)\leq d_G(a,c)+d_G(b,d)$ for any $a,b,c,d\in G$, and it follows that
\begin{align*}
 d_G( g^i,g^{i+1})
& = d_{G}\Big( \prod_{v\in \mathbb{T}_i} \exp_G( \sum_{w\in \sigma(v)} X_w), \prod_{v\in \mathbb{T}_i} \prod_{w\in \sigma(v)}  \exp_G(X_w) \Big)\\
&\leq \sum_{ v\in \mathbb{T}_i } d_G\big( \exp_G( \sum_{w\in \sigma(v)} X_w),   \prod_{w\in \sigma(v)}  \exp_G(X_w)  \big)\\
&\leq C \sum_{ v\in \mathbb{T}_i }  \Big( \sum_{w\in \sigma(v)}   \|X_w\|  \Big)^2\quad \mbox{(using Lemma \ref{le:approx1}).}\\
&\leq C (\#\sigma(v))\sum_{ w\in \mathbb{T}_{i+1} }  \|X_w\|^2.
\shortintertext{Therefore,}
\mathbb{E}\big[
 d_G( g^i,g^{i+1})\big]&\leq C \lceil \log (\# \mathcal{P})\rceil \sum_{ w\in \mathbb{T}_{i+1} } \mathbb{E}\big[ \|X_w\|^2\big]
\end{align*}
We now apply Lemma \ref{le:approxlie} to $X_w=\sum_{i\in l(w)} X_i$, and we get
\[ \mathbb{E} \big[\|X_w\|^2 \big]\leq CK^{-2} \sum_{x\in l(w)} S^{(1)}(x,\pi^{\leq x}(g))^2,\]
and therefore
\[\sum_{ w\in \mathbb{T}_{i+1} }  \mathbb{E}[\|X_w\|^2]\leq  CK^{-2}\sum_{ w\in \mathbb{T}_{i+1} }  \sum_{x\in l(w)} S^{(1)}(x,\pi^{\leq x}(g))^2=CK^{-2}|g|_2^2.\]
%
%

We end up with \begin{align*}
\mathbb{E}[\d_G(g^D,1) ]
&\leq\mathbb{E}[ d_G(g^1,1)^2]^{\tfrac{1}{2}}+\sum_{k=1}^{D-1} \mathbb{E}[d_G(g^i,g^{i+1})]\\
&\leq  CK^{-1}|g|_2 +  C D \lceil \log (\# \mathcal{P})\rceil K^{-2}|g|_2^2\\
&\leq C K^{-1}|g|_2+ C' \frac{\log (\# \mathcal{P})^2}{\log(\log(\# \mathcal{P}))}K^{-2}|g|_2^2,
\end{align*}
as announced.
\end{proof}

\section{Acknowledgements}
The author is extremely grateful to Thierry Lévy and to Pierre Perruchaud for their substantial help.
I am pleased to acknowledge support from the ENS Lyon and the LPSM during the early stages of this work, and from the
ERC Advanced Grant 740900 (LogCorRM) during its latter stages.

\newpage

\printbibliography

\end{document}